\documentclass[12pt,reqno,amscd,amssymb,pstricks,latexsym,amsbsy,xypic,mathrsfs,verbatim]{amsart}
\usepackage{amsfonts,amssymb,amsthm}
\usepackage{amsmath,amscd}
\usepackage{pstricks}
\usepackage{pstricks,pst-node}
\usepackage{mathrsfs}
\usepackage[all]{xy}
\usepackage[enableskew,vcentermath]{youngtab}

\setlength{\textwidth}{6in} \setlength{\textheight}{9in}
\setlength{\hoffset}{-.5in} \setlength{\voffset}{-.5in}
\setlength{\footskip}{20pt}



\renewcommand{\subsection}[1]{\vspace{.18in}\par\noindent\addtocounter{subsection}{1}\setcounter{equation}{0}{\bf\thesubsection.\hspace{5pt}#1}}

\newtheorem{theorem}{Theorem}[section]
\numberwithin{equation}{theorem}

\theoremstyle{definition}
\newtheorem{Def}[theorem]{Definition}

\newtheorem{Example}[theorem]{Example}
\newtheorem{Rem}[theorem]{Remark}

\newtheorem{Rems}[theorem]{Remarks}
\theoremstyle{plain}
\newtheorem{Prop}[theorem]{Proposition}
\newtheorem{Thm}[theorem]{Theorem}

\newtheorem{Lem}[theorem]{Lemma}
\newtheorem{Coro}[theorem]{Corollary}


\def\sB{{\mathcal B}}

\def\sH{{\mathcal H}}
\def\sI{{\mathcal I}}
\def\sJ{{\mathcal J}}

\def\sS{{\mathcal S}}
\def\sT{{\mathcal T}}

\def\sX{{\mathcal X}}

\def\sZ{{\mathcal Z}}


\newcommand{\scr}[1]{\mathscr #1}

\def\fS{{\frak S}}

\def\sfc{{\mathsf c}}

\def\sfz{{\mathsf z}}
\def\sfs{{\mathsf s}}

 \newcommand{\bfOg}{{\bf\Omega}}

\newcommand{\msD}{\mathscr D}

\newcommand{\Hr}{{\sH(r)}}

\newcommand{\vtg}{{\!\vartriangle\!}}
\newcommand{\Hall}{{{\mathfrak H}_\vtg(n)}}

\newcommand{\bfHall}{{\boldsymbol{\mathfrak H}_\vtg(n)}}
\newcommand{\Hallpi}{\bfHall^{\geq 0}}
\newcommand{\Hallmi}{\bfHall^{\leq 0}}

\newcommand{\dHallr}{{\boldsymbol{\mathfrak D}_\vtg}(n)}

\def\ggp#1#2{\left[\kern-3.2pt\left[{#1\atop #2}\right]\kern-3.2pt\right]}

\newcommand{\leb}{\left[}

\newcommand{\rib}{\right]}

\newcommand{\row}{\text{ro}}

\newcommand{\Lanr}{\Lambda(n,r)}

\newcommand{\tri}{\triangle(n)}

\newcommand{\afsl}{\widehat{\frak{sl}}_n}

\newcommand{\afgl}{\widehat{\frak{gl}}_n}

\def\leq{\leqslant}\def\geq{\geqslant}

 \newcommand{\lm}{\longmapsto}

  \newcommand{\Og}{\Omega}

 \newcommand{\og}{\omega}

 \newcommand{\up}{v}

 \newcommand{\ep}{\epsilon}
 \newcommand{\al}{\alpha}

 \newcommand{\ti}{\widetilde}

\def\th{\theta}

\newcommand{\ot}{\otimes}

\newcommand{\bfe}{\mathbf{e}}

\newcommand{\lra}{\longrightarrow}
\newcommand{\ra}{\rightarrow}
 \newcommand{\la}{{\lambda}}

 \newcommand{\La}{\Lambda}

 \newcommand{\mbn}{\mathbb N}
 \newcommand{\mbq}{\mathbb Q}
 
 \newcommand{\mbz}{\mathbb Z}

 \newcommand{\bfs}{{\mathbf{s}}}

\newcommand{\bfa}{{\mathbf{a}}}
\newcommand{\bfb}{{\mathbf{b}}}
\newcommand{\bfm}{{\mathbf{m}}}
 \newcommand{\bfu}{{\mathbf{m}}}
 \newcommand{\bfU}{{\mathbf{U}}}

\newcommand{\bfc}{{\mathbf{c}}}

\newcommand{\End}{\operatorname{End}}
\newcommand{\Hom}{\operatorname{Hom}}

\newcommand{\diag}{\operatorname{diag}}

\def\ro{\text{\rm ro}}
\def\co{\text{\rm co}}


\def\fD{{\mathfrak D}}

\def\id{{\operatorname{id}}}




\newcommand{\scR}{{\mathscr R}}
\newcommand{\scK}{{\mathscr K}}

\newcommand{\afsHr}{{{\mathcal H}_\vtg(r)}}
\newcommand{\afsSr}{{{\mathcal S}}_\vtg(n,r)}
\newcommand{\cysHr}{{{\mathcal H}_\bfu(r)}}
\newcommand{\tbsHr}{{{\mathcal H}_{\bf 2}(r)}}

\newcommand{\cysSr}{{{\mathcal S}}_\bfu(n,r)}
\newcommand{\tbsSr}{{{\mathcal S}}_{\bf2}(n,r)}
\newcommand{\afsHrR}{{{\mathcal H}_{\vtg}(r)_{\scR}}}
\newcommand{\afsSrR}{{{\mathcal S}}_{\vtg}(n,r)_{\scR}}
\newcommand{\cysHrR}{{{\mathcal H}_{\bfu}(r)_{\scR}}}
\newcommand{\busHrR}{{{\mathcal H}_{\bullet}(r)_{\scR}}}
\newcommand{\cysSrR}{{{\mathcal S}}_{\bfu}(n,r)_{\scR}}



\def\la{\lambda}
\def\da{\mathbb{A}}
\def\cnrm{{\Theta}_m(n,r)}
\def\row{\text{ro}}
\def\col{\text{co}}
\def\dlm{\msD_{\lambda,\mu}}
\newcommand{\Si}{\mathbf{S}^{\imath}}
\def\wdl{\widetilde{\msD}_\lambda}
\def\wdlm{\widetilde{\msD}_{\lambda,\mu}}

\begin{document}

\title{Slim cyclotomic $q$-Schur algebras}

\author{Bangming Deng, Jie Du and Guiyu Yang}
\address{Yau Mathematical Sciences Center, Tsinghua University,
Beijing 100084,  China.} \email{ bmdeng@math.tsinghua.edu.cn }
\address{School of Mathematics and Statistics, University of New South Wales,
Sydney, NSW 2052, Australia.} \email{j.du@unsw.edu.au\quad{\it
Home Page: {\tt http://www.maths.unsw.edu.au/$\sim$jied}}}
\address{School of Mathematics and Statistics, Shandong University of Technology,
Zibo, 255000, China.} \email{yanggy@mail.bnu.edu.cn}


\date{\today}

\subjclass[2010]{20C08, 20G07, 20G05, 20C33}
\thanks{This work was supported by a 2017 UNSW Science Goldstar Grant (Jie Du), and National Natural Science Foundation of China (
Grant Nos. 11671024, 11671234 and 11331006).}
\keywords{cyclotomic Hecke algebra, slim cyclotomic $q$-Schur algebra, permutation module, Schur--Weyl duality, irreducible module}

\begin{abstract}We construct a new basis for a slim cyclotomic $q$-Schur algebra $\cysSr$ via symmetric
polynomials in Jucys--Murphy operators of the cyclotomic Hecke algebra $\cysHr$. We show that this basis,
labelled by matrices, is not the double coset basis when $\cysHr$ is the Hecke algebra of a Coxeter group,
but coincides with the double coset basis for the corresponding group algebra, the Hecke algebra at $q=1$.
As further applications, we then discuss the cyclotomic Schur--Weyl duality at the integral level.
This also includes a category equivalence and a classification of simple objects.
\end{abstract}

\maketitle

\section{Introduction}

Cyclotomic $q$-Schur algebras were first introduced  by Dipper, James and Mathas (DJM) \cite{DJM}
as a natural generalisation of Dipper and James' $q$-Schur algebra  \cite{DJ89} associated with
the symmetric groups $\fS_r$ to a similar algebra associated with the wreath products
$\mathbb Z_m\wr\fS_r$ of the cyclic group $\mathbb Z_m$ with $\fS_r$. See also \cite{DS}
for the $q$-Schur$^{2\textsc{b}}$ (or $q$-Schur$^2$) algebra associated with the type $B$
Weyl group  $\mathbb Z_2\wr\fS_r$ and \cite{Gr97, VV, DDF} for the affine $q$-Schur algebra
associated with the affine symmetric group $\mathbb Z\wr\fS_r$. Like the $q$-Schur algebras,
the cyclotomic ones are quasi-hereditary with a cellular basis; see \cite[(6.12), (6.18)]{DJM}.

 As an endomorphism algebra of the direct sum of  certain cyclic modules over the cyclotomic
 Hecke algebra \cite{AK}, the construction of a cyclotomic $q$-Schur algebra involves {\it all}
 $m$-fold multipartitions of $r$,  which index all irreducible characters of the finite group
 $\mathbb Z_m\wr\fS_r$.  However, when the ``cyclotomic Schur--Weyl duality'' is under consideration,
 these algebras seem a bit too ``fat''. Thus, as suggested in \cite{LR},  a certain centraliser algebra,
 involving only partitions of $r$, has been considered to play such a role. We will call these
 centraliser algebras {\it slim cyclotomic $q$-Schur algebras} in this paper.

A further study on slim cyclotomic $q$-Schur algebras is motivated by the following recent works:
First, the investigation on affine $q$-Schur algebras and affine Schur--Weyl duality has made significant
progress. This includes a full generalisation to the affine case \cite{DDF, DF} of the original work \cite{BLM}
of Beilinson--Lusztig--MacPherson (BLM) and a classification of irreducible representations of affine
$q$-Schur algebras. These works should have applications to  slim cyclotomic $q$-Schur algebras. Second,
there has been also a breakthrough in generalising the BLM work to types other than $A$ series by
Bao--Kujawa--Li--Wang \cite{BKLW} for type B/C; see also \cite{FL} for type $D$ and \cite{FLLW}
for affine type $C$. The $q$-Schur algebra ${\bf S}^\iota$ defined in \cite[\S6.1]{BKLW} is used in
linking the Hecke algebra of type $C$ with a certain quantum symmetric pair, where a coideal subalgebra
is constructed to play the role required in the Schur--Weyl duality in this case. The algebra ${\bf S}^\iota$ is
the $m=2$ slim cyclotomic $q$-Schur algebra. This algebra is also called the hyperoctahedral Schur
algebra in \cite{Gr97}  or the $q$-Schur$^{1\textsc b}$ algebra in \cite{DS1}.

As a centraliser algebra of DJM's cyclotomic $q$-Schur algebra, a slim cyclotomic $q$-Schur algebra
inherits a cellular basis that does not provide enough information for the representations of the algebra.
However, the most natural basis, analogous to the well-known orbital basis  for the endomorphism algebra
of a permutation module (see \cite{S}), is not available if the complex reflection groups $\mathbb Z_m\wr\fS_r$
is not a Coxeter group. In this paper, we will reveal a new integral basis for a slim cyclotomic
$q$-Schur algebra. This basis is different from the usual double coset basis for the Hecke endomorphism
algebra associated with a Coxeter group. However, it still reduces to a double coset basis for
the group algebra. So we may regard our new basis as a new quantisation of the double coset basis,
generalising the natural basis for the endomorphism algebra of a permutation module considered in
\cite{S} to this cyclotomic world.

It is interesting to point out that the definition of this new basis involves symmetric polynomials
in Jucys--Murphy operators and can be lifted to affine $q$-Schur algebras. In this way, we obtain
an algebra epimorphism from an affine $q$-Schur algebra to a slim cyclotomic $q$-Schur algebra of
the same degree. Thus, an epimorphism from the integral quantum loop algebra of $\mathfrak{gl}_n$
to a slim cyclotomic $q$-Schur algebra can be constructed. Thus, we may put slim cyclotomic $q$-Schur algebras
in the context of a possible cyclotomic Schur--Weyl duality. In fact,
we propose a candidate for a possible ``finite type'' quantum subalgebra which can be used to replace
$\bfU(\afgl)$ in the cyclotomic Schur--Weyl duality. We will also establish a Morita equivalence
in the cyclotomic case and present a classification of irreducible modules for a slim cyclotomic
$q$-Schur algebra. It should be noted that the Schur--Weyl duality for cyclotomic Hecke algebras
has been investigated by many authors; see, for example, \cite{ATY, SS,Hu}. However, the tensor
spaces considered in these works are different from those used in the current paper.

In a forthcoming paper \cite{DDY}, we will determine which slim cyclotomic $q$-Schur algebras
are quasi-hereditary and further investigate the representations of such an algebra at roots of unity.

We organize the paper as follows. In Section 2, we give a brief background on
cyclotomic Hecke algebras and slim cyclotomic $q$-Schur algebras. In Section 3, we will see
how the symmetric polynomials in Jucys--Murphy operators are used in the construction of an integral
basis for $\sS_\bfu(1,r)$, which is a centraliser algebra of $\sS_\bfu(n,r)$ associated with the partition $(r)$,
and prove that it is commutative. In Section 4, we generalise the construction to the entire algebra
$\sS_\bfu(n,r)$ by employing Mak's double coset description in term of matrices \cite{Mak}.
In Section 5, we will make a comparison of our new basis with
the usual double coset basis for the Coxeter group, $\fS_{2,r}$ of type $B$. Sections 6 and 7 are devoted
to the cyclotomic Schur--Weyl duality. Partial double centraliser property is established through affine
$q$-Schur algebras and quantum loop algebra of $\mathfrak{gl}_n$. A certain category equivalence has also
been lifted to the cyclotomic case. Finally, as a further application of our discovery, we classify all
simple objects for the slim cyclotomic $q$-Schur algebras as an application of a similar classification
for affine $q$-Schur algebras. The two appendices cover some technical proofs and a generalisation of
Lemma \ref{sym} to the affine case.

\section{Ariki--Koike algebras and slim cyclotomic $q$-Schur algebras}

For any nonnegative integer $m\geq0$, let $\mbz_m=\mbz/m\mbz$.
We always identify $\mbz_m$ with the subset $\mbz_m=\{0,1,\ldots,m-1\}$ of $\mbz$ if $m\geq1$
(and, of course, $\mbz_0=\mbz$). Thus,
$$\mbz_m^r=\{\bfa=(a_i)\in\mbz^r\mid a_i\in\mbz_m,\;\forall\,1\leq i\leq r\}$$
is a subset of $\mbz^r$.

Let $\fS_r=\fS_{\{1,2,\ldots, r\}}$ be the symmetric group on $r$ letters with the basic transpositions
$s_i=(i,i+1)$, $1\leq i\leq r-1$, as generators, and let $\fS_{m,r}=\mbz_m\wr\fS_r$ be the wreath product
of the cyclic group $\mbz_m$ and $\fS_r$. Then $\fS_{0,r}=\fS_{\vtg,r}$ is the affine symmetry group,
$\fS_{1,r}=\fS_r$, $\fS_{2,r}$ is the Weyl group of type $B$, and $\fS_{m,r}$ for $m\geq 3$ are the complex
reflection groups of type $G(m,1,r)$.

From now onwards, $\scR$ always denotes a commutative ring (with $1$) containing invertible element $q$.

For $m\geq 1$ and an $m$-tuple
$$\bfu=(u_1,\ldots,u_m)\in \scR^m,$$ the {\it cyclotomic Hecke algebra} (or the Ariki--Koike algebra) over $\scR$
$$\cysHr=\cysHrR,$$
associated with $\fS_{m,r}$ (and parameters $q,\mathbf m$), is by definition the $\scR$-algebra generated
by $T_i,L_j$ ($1\leq i\leq r-1,1\leq j\leq r$) with defining relations
\begin{itemize}
\item[(CH1)] $(T_i+1)(T_i-q)=0,$
\item[(CH2)] $T_iT_{i+1}T_i=T_{i+1}T_iT_{i+1},\;\;T_iT_j=T_jT_i\;(|i-j|>1),$
\item[(CH3)] $L_iL_j=L_jL_i,$ $(L_1-u_1)\cdots(L_1-u_m)=0,$
\item[(CH4)]  $T_iL_iT_i=q L_{i+1},\;L_jT_i=T_iL_j\;(j\not=i,i+1).$
\end{itemize}

For $m=0$, the affine Hecke algebra
$$\afsHr=\afsHrR,$$
associated with $\fS_{\vtg,r}$ (and parameter $q$), is presented by the
generators $T_i,\;X_j^\pm$ for $1\leq i\leq r-1$ and $1\leq j\leq r$ and relations (CH1), (CH2) together with
\begin{itemize}
\item[(CH3$'$)] $X_iX_j=X_jX_i$, $X_iX_i^{-1}=1=X_i^{-1}X_i$,
\item[(CH4$'$)] $ T_iX_iT_i=q X_{i+1},\;\; X_jT_i=T_iX_j\;(j\not=i,i+1)$.
\end{itemize}
In both cases, the subalgebra generated by $T_1,\ldots, T_{r-1}$
is the Hecke algebra $\Hr=\sH(r)_\scR$ of $\fS_r$.
If {\it all $u_i\in\scR$ are invertible}, then the $L_i$ are invertible in $\cysHr$. Thus, in this case,
there is a natural surjective homomorphism\footnote{When $m=1$, $\ep_\bfu$ is known as the evaluation map.}
\begin{equation}\label{epimor-af-AK-alg}
\ep_\bfu:\afsHr\lra \cysHr,\;T_i\lm T_i,\,X_j\lm L_j,
 \end{equation}
which gives an algebra isomorphism
$$ \afsHr/\langle (X_1-u_1)\cdots(X_1-u_m)\rangle\cong \cysHr.$$
Thus, the algebra $\cysHr$ is a cyclotomic quotient of the affine Hecke algebra $\afsHr$ when all the parameters
$u_i$ are invertible.

The algebra $\cysHr$ admits an anti-automorphism
\begin{equation}\label{tau}
\tau:\cysHr\lra\cysHr,\quad T_i\longmapsto T_i, L_j\longmapsto L_j.
\end{equation}
Since $q$ is invertible in $\scR$, all the $T_i$ for $1\leq i\leq r-1$ are invertible with
$$T_i^{-1}=q^{-1}(T_i-(q-1)).$$
By multiplying $T_i^{-1}$ and $T_i$ on both sides of $T_iL_iT_i=qL_{i+1}$,
respectively, we get the following two equalities
\begin{equation}\label{LiTi}
L_iT_i=T_iL_{i+1}+(1-q)L_{i+1}, \;\;\; L_{i+1}T_i=T_iL_i+(q-1)L_{i+1}.
\end{equation}

 For each $\bfa=(a_1, \ldots, a_r)\in\mbn^r$, write
$$L^\bfa=L_1^{a_1}\cdots L_r^{a_r}\in\cysHr,\quad X^\bfa=X_1^{a_1}\cdots X_r^{a_r}\in\afsHr.$$
 Then, by an inductive argument, we have for each $1\leq i\leq r-1$ and $\bfa\in \mbn^r$,
\begin{equation} \label{Bernstein-formula}
L^\bfa T_i=T_i L^{\bfa s_i}+(1-q)\frac{L_{i+1}(L^\bfa-L^{\bfa s_i})}{L_i-L_{i+1}},
\end{equation}
 where $\bfa s_i$ is given by permuting the $i$-th and $(i+1)$-th entries of $\bfa$.
 The affine version of the formula (with $L^\bfa$ replaced by $X^\bfa$) is well known; see, e.g., \cite[(3.3.0.2)]{DDF}.

 For each $w\in\fS_r$ and $\bfa\in\mathbb Z^r$, define the place permutation
\begin{equation}\label{place}
\bfa w=(a_{w(1)}, a_{w(2)},\cdots, a_{w(r)}).
\end{equation}
 For each $1\leq i\leq r-1$, set
 $$\alpha_i=(0,\ldots, 1, -1, 0, \ldots, 0)\in \mbz^r$$
with $1$ in the $i$-th position. Thus, $\bfa s_i=\bfa+(a_{i+1}-a_i)\alpha_i$.
Applying \eqref{Bernstein-formula} gives the following lemma.

\begin{Lem}\label{commutator}
Suppose that $1\leq i\leq r-1$ and $\bfa\in \mbn^r$. Then in $\cysHr$
$$L^\bfa T_i=\begin{cases}
T_iL^{\bfa}=T_iL^{\bfa s_i},& \text{if $a_i=a_{i+1}$};\\
T_iL^{\bfa s_i}+(q-1)\sum_{t=1}^{a_{i+1}-a_i}L^{\bfa s_i-t\alpha_i},\;\;& \text{if $a_i<a_{i+1}$};\\
T_iL^{\bfa s_i}+(1-q)\sum_{t=0}^{a_{i}-a_{i+1}-1}L^{\bfa s_i+t\alpha_i}\;\;& \text{if $a_i>a_{i+1}$}.
\end{cases}$$
A similar formula holds in $\afsHr$.
\end{Lem}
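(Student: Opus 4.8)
The plan is to start from the Bernstein-type commutation formula \eqref{Bernstein-formula} and to simplify the divided-difference term on its right-hand side, using the fact that the $L_j$ generate a commutative subalgebra of $\cysHr$. Concretely, write $a=a_i$, $b=a_{i+1}$ and $L'=\prod_{j\neq i,i+1}L_j^{a_j}$, so that $L^\bfa=L'L_i^aL_{i+1}^b$ and $L^{\bfa s_i}=L'L_i^bL_{i+1}^a$; hence $L^\bfa-L^{\bfa s_i}=L'(L_i^aL_{i+1}^b-L_i^bL_{i+1}^a)$. The three cases of the statement are exactly $a=b$, $a<b$, and $a>b$. When $a=b$ we have $\bfa s_i=\bfa$, the divided difference vanishes, and \eqref{Bernstein-formula} gives $L^\bfa T_i=T_iL^{\bfa s_i}=T_iL^\bfa$ at once.

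For $a<b$, I would factor $L_i^aL_{i+1}^b-L_i^bL_{i+1}^a=L_i^aL_{i+1}^a(L_{i+1}^{b-a}-L_i^{b-a})$ and apply the identity $\frac{x^n-y^n}{x-y}=\sum_{j=0}^{n-1}x^{n-1-j}y^j$ with $x=L_{i+1}$, $y=L_i$, $n=b-a$, together with $L_i-L_{i+1}=-(L_{i+1}-L_i)$. After reintroducing the factor $L_{i+1}$ from \eqref{Bernstein-formula}, the divided difference becomes $-\sum_{t=1}^{b-a}L'L_i^{b-t}L_{i+1}^{a+t}$; since $\bfa s_i-t\alpha_i$ has $i$-th entry $b-t$ and $(i+1)$-th entry $a+t$, this equals $-\sum_{t=1}^{b-a}L^{\bfa s_i-t\alpha_i}$, and multiplying by $(1-q)$ yields the stated formula with coefficient $q-1$. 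The case $a>b$ is entirely symmetric: one factors out $L_i^bL_{i+1}^b$, applies the same geometric-series identity with the roles of $L_i$ and $L_{i+1}$ exchanged, and after reindexing obtains $\sum_{t=0}^{a-b-1}L^{\bfa s_i+t\alpha_i}$, multiplied by $(1-q)$.

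Finally, the affine assertion follows verbatim from the affine analogue of \eqref{Bernstein-formula} recorded in the excerpt (with every $L$ replaced by $X$), since the argument uses only the commutativity of the generators $X_j$ and the same polynomial identity. I expect the only delicate point to be the sign and reindexing bookkeeping needed to match the closed form of the divided difference to the precise summation ranges in the statement; carrying the substitution between $t$ and the summation index correctly in each of the two nontrivial cases is the whole content of the verification, and there is no substantive obstacle beyond it.
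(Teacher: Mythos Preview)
Your proposal is correct and follows exactly the approach indicated in the paper, which simply says ``Applying \eqref{Bernstein-formula} gives the following lemma.'' You have supplied the details of that application: factoring the difference $L^\bfa-L^{\bfa s_i}$, using the geometric-series identity to cancel $L_i-L_{i+1}$, and reindexing to match the stated sums; the bookkeeping checks out in all three cases.
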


 The following result is taken from \cite[Thm 3.10]{AK}.

\begin{Lem} \label{pbw basis} The algebra $\cysHr$ is a free $\scR$-module with bases
$$\{T_wL^{\bfa}\mid w\in\fS_r, \bfa\in\mbz_m^r\}\quad\text{and}\quad \{L^{\bfa}T_w\mid w\in\fS_r, \bfa\in\mbz_m^r\}.$$
Replacing $\mbz_m^r$ by $\mbz^r$ and $L^\bfa$ by $X^\bfa$ yields bases for $\afsHr$.
\end{Lem}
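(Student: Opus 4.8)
The plan is to prove spanning first and linear independence second; the two bases are then interchanged by the anti-automorphism $\tau$ of \eqref{tau}, and the affine statement is obtained by running the same argument for $\afsHr$.

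\textbf{Spanning.} An arbitrary element of $\cysHr$ is an $\scR$-combination of words $g_1L^{\bfb_1}g_2L^{\bfb_2}\cdots$ with each $g_\ell$ a product of $T_i$'s. I would use (CH3) to merge $L$-monomials, the relations \eqref{LiTi} and (CH4) to commute an $L$-monomial past a single $T_i$, and, crucially, Lemma \ref{commutator}, which rewrites $L^{\bfb}T_i$ as $T_iL^{\bfb s_i}$ plus an $\scR$-combination of $T$-free monomials $L^{\bfc}$ of the same total degree as $L^{\bfb s_i}$. Sweeping each $L$-monomial to the far right this way terminates (at every step the monomial being moved advances past one more $T_i$, while its total degree is unchanged), leaving an $\scR$-combination of $(\text{product of }T_i\text{'s})\cdot L^{\bfa}$; the leading product of $T_i$'s is then put in the form $T_w$, $w\in\fS_r$, by (CH2) and $T_i^2=(q-1)T_i+q$. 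This shows $\{T_wL^{\bfa}\mid w\in\fS_r,\ \bfa\in\mbn^r\}$ spans $\cysHr$. That one may further restrict to $\bfa\in\mbz_m^r$ rests on each $L_i$ inheriting from (CH3) a monic degree-$m$ relation over $\scR$ --- a fact I would establish together with linear independence below --- after which $L_i^{a_i}$ with $a_i\geq m$ is rewritten in lower powers, the commuting of the $L_i$ ensuring the remaining factors are untouched. For $\afsHr$ the same computation applies with $\bfa$ ranging over $\mbz^r$ (negative exponents absorbed by the $X_j^{-1}$) and the cyclotomic reduction omitted; the resulting spanning sets $\{T_wX^{\bfa}\}$, $\{X^{\bfa}T_w\}$ form the classical Bernstein basis, for which see \cite[(3.3.0.2)]{DDF}.

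\textbf{Linear independence.} Every relation used above has coefficients in $\sA:=\mbz[q^{\pm1},u_1,\dots,u_m]\subseteq\scR$, so the same spanning argument over $\sA$ (with the $u_i$ now indeterminates) shows the $\sA$-algebra $\cysHr^{\sA}$ is generated as an $\sA$-module by the $N:=r!\,m^r$ elements $T_wL^{\bfa}$, and $\cysHr\cong\cysHr^{\sA}\otimes_\sA\scR$ by base change of presented algebras. It therefore suffices to show $\cysHr^{\sA}$ is $\sA$-free of rank $N$: freeness of $\cysHr$ and the two bases then follow for every $\scR$. Writing $F$ for the fraction field of $\sA$, spanning gives $\dim_F(\cysHr^{\sA}\otimes_\sA F)\le N$; for the reverse inequality I would construct, for these generic parameters, the ``seminormal'' simple modules of $\cysHr^{\sA}\otimes_\sA F$ indexed by the $m$-multipartitions $\bsla$ of $r$ --- pairwise non-isomorphic, with $\dim=f^{\bsla}$ the number of standard $\bsla$-tableaux --- so that $\dim_F(\cysHr^{\sA}\otimes_\sA F)\ge\sum_{\bsla}(f^{\bsla})^2=N$ by the standard multinomial identity. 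Hence equality holds, $\{T_wL^{\bfa}\}$ is an $F$-basis, and an $\sA$-generating set that is linearly independent over the fraction field of the domain $\sA$ is an $\sA$-basis; this also forces the asserted degree-$m$ relation on each $L_i$. (For $\afsHr$, of infinite $\scR$-rank, one instead reads independence of $\{T_wX^{\bfa}\}$ off the faithful polynomial/Bernstein representation on $\scR[X_1^{\pm1},\dots,X_r^{\pm1}]$.) Finally, $\tau$ fixes every $T_i$ and $L_j$, hence sends $T_wL^{\bfa}$ to $L^{\bfa}T_{w^{-1}}$, turning the first basis into the second.

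The main obstacle is the rank computation in the independence step: the spanning step is routine rewriting, but pinning $\dim_F(\cysHr^{\sA}\otimes_\sA F)$ to exactly $N$ --- equivalently, checking that the rewriting system of the spanning step is confluent, or producing the generic seminormal modules --- is the genuine content, carried out via an explicit faithful module in \cite[Thm 3.10]{AK}.
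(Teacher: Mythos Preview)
The paper gives no proof of its own here; it simply cites \cite[Thm~3.10]{AK}. Your sketch is a faithful outline of exactly that Ariki--Koike argument (spanning by straightening words, then a rank count over the generic parameter ring via the seminormal representations, followed by base change), and you yourself flag this provenance in your final sentence. So there is no disagreement of approach to report.

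One small wrinkle worth tightening: by sweeping the $L$-monomials to the \emph{right} you end up needing a monic degree-$m$ relation on every $L_i$, and you defer that fact to the independence step, where it does not actually fall out. The clean fix is to sweep $L$'s to the \emph{left} instead (equivalently, show the $\scR$-span of $\{L^{\bfa}T_w:\bfa\in\mbz_m^r,\,w\in\fS_r\}$ is closed under left multiplication by $L_1$ and the $T_i$), since then only the relation on $L_1$---which is literally (CH3)---is required; the second basis then gives the first via $\tau$, exactly as you say. With that adjustment your argument is complete and matches what the paper invokes.
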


A sequence $\la=(\la_1,\ldots,\la_n)$ of non-negative integers is called a composition (resp., partition)
of $r$, if $|\la|=\sum_i\la_i=r$ (resp., if, in addition, $\la_1\geq\la_2\geq\ldots$). For a composition
$\lambda$ of $r$, let $\fS_\la$ be the corresponding standard Young (parabolic) subgroup of $\fS_r$.
Let $\msD_\lambda$ be the set of minimal length right coset representatives of $\fS_\lambda$ in $\fS_r$. Then
$\msD_\lambda^{-1}=\{d^{-1}\;|\; d\in \msD_\lambda\}$ is the set of
minimal length left coset representatives of $\fS_\lambda$ in $\fS_r$.

Set
$x_\la=\sum_{w\in\fS_\la}T_w\in\Hr\in\cysHr.$
Then (see, e.g., \cite[Lem.~7.32]{DDPW})
\begin{equation} \label{commutator-x-T}
x_\la T_i=T_i x_\la=qx_\la,\;\forall\,i\in J_\la,
\end{equation}
where
\begin{equation}\label{Jla}
J_\la=\big\{1\leq i\leq r-1 \mid i\not=\sum_{j=1}^s \la_j; \forall\,1\leq s< n\big\}.
\end{equation}
We may use a similar condition to characterise the ``permutation modules''  $x_\la\cysHr$ and $\cysHr  x_{\la}$.
\begin{Lem}\label{permutation mod} Let $\lambda\in \Lambda(n,r)$. We have
$$\aligned
x_\la\cysHr &=\{h\in \cysHr \mid T_ih=qh,\;\forall i\in J_\la\},\\
\cysHr  x_{\la}&=\{h\in \cysHr \mid hT_i=qh,\;\forall i\in J_\la\}.\\
\endaligned$$
The two modules are $\scR$-free with respective bases $\{x_\la T_dL^\bfa \mid d\in\msD_\la ,\; \bfa\in\mbz_m^r\}$
and $\{L^\bfa T_d x_\la\mid d\in\msD_\la^{-1} ,\; \bfa\in\mbz_m^r\}.$ Moreover, we have an $\scR$-module
isomorphism
$$\Hom_{\cysHr}(x_\mu \cysHr,x_\la\cysHr)\cong x_\la\cysHr \cap \cysHr x_\mu.$$
\end{Lem}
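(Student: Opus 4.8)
The statement has two parts: the description of the two permutation modules $x_\la\cysHr$ and $\cysHr x_\la$ (including their freeness and explicit bases), and the Hom-isomorphism. I would treat them in that order, since the Hom part is essentially a formal consequence of the module descriptions.

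First I would establish the characterisation $x_\la\cysHr=\{h\in\cysHr\mid T_ih=qh,\ \forall i\in J_\la\}$. The inclusion $\subseteq$ is immediate from \eqref{commutator-x-T}: if $h=x_\la h'$ then $T_ih=(T_ix_\la)h'=qx_\la h'=qh$ for $i\in J_\la$. For the reverse inclusion I would use the basis $\{T_wL^\bfa\mid w\in\fS_r,\ \bfa\in\mbz_m^r\}$ from Lemma \ref{pbw basis}, together with the coset decomposition $\fS_r=\bigsqcup_{d\in\msD_\la}\fS_\la d$ and the standard fact that $\{x_\la T_d\mid d\in\msD_\la\}$ is an $\scR$-basis of $x_\la\Hr$ (a multiplicative reindexing of $\{T_w\mid w\in\fS_r\}$ via the length-additivity $T_w=T_{w_\la}T_d$ for $w=w_\la d$). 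Then $\{x_\la T_d L^\bfa\mid d\in\msD_\la,\ \bfa\in\mbz_m^r\}$ is $\scR$-free (it is obtained from the PBW basis by an invertible unitriangular change of coefficients, using that the $L^\bfa$ commute with $x_\la$ up to lower terms via Lemma \ref{commutator} — more precisely, one expands $x_\la T_d L^\bfa$ in the PBW basis $\{T_w L^\bfb\}$ and checks the transition matrix is invertible), so it spans a free submodule $V$ of $x_\la\cysHr$. Conversely, given $h$ with $T_ih=qh$ for all $i\in J_\la$, write $h=\sum_{w,\bfa}c_{w,\bfa}T_wL^\bfa$ and show that the condition forces $h$ to lie in $V$: grouping by $\msD_\la$-cosets, the action of the $T_i$, $i\in J_\la$, on the left permutes/relates the $\fS_\la$-part of the $T_w$ factor, and the eigenvalue-$q$ condition pins down the coefficients to be constant on each coset, i.e. $h\in x_\la\cysHr$. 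The statement for $\cysHr x_\la$ follows by applying the anti-automorphism $\tau$ of \eqref{tau}, which fixes all $T_i$ and all $L_j$, hence fixes $x_\la$ and sends $x_\la\cysHr$ to $\cysHr x_\la$ and the basis $\{x_\la T_d L^\bfa\}$ to $\{L^\bfa T_{d^{-1}}x_\la\}$ (note $\tau(T_dL^\bfa)=\tau(L^\bfa)\tau(T_d)=L^\bfa\,{}^{?}$; one should rather use $\tau(T_wL^\bfa)$ carefully, but since $\tau$ reverses products and $\tau(T_i)=T_i$, $\tau(T_w)=T_{w^{-1}}$, giving the left-coset basis indexed by $\msD_\la^{-1}$).

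For the Hom-isomorphism, the plan is the standard one for permutation modules. Any $\cysHr$-module homomorphism $\phi:x_\mu\cysHr\to x_\la\cysHr$ is determined by the image $\phi(x_\mu)$ of the generator $x_\mu$, and since $x_\mu$ satisfies $T_i x_\mu=qx_\mu$ for $i\in J_\mu$, its image $h:=\phi(x_\mu)$ must also satisfy $T_ih=qh$ for $i\in J_\mu$, i.e. $h\in x_\la\cysHr\cap(\text{the }q\text{-eigenspace for }J_\mu)$. By the first part of the lemma applied to $\mu$, that $q$-eigenspace condition (now on an element already in $x_\la\cysHr$) says exactly $h\in x_\la\cysHr\cap\cysHr x_\mu$. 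Conversely, for any such $h$, right multiplication $x_\mu\cysHr\to x_\la\cysHr$, $x_\mu a\mapsto ha$, is well-defined (if $x_\mu a=x_\mu a'$ then, writing $h=h'x_\mu$, we get $ha=h'x_\mu a=h'x_\mu a'=ha'$) and is clearly a right $\cysHr$-module map. These two assignments $\phi\mapsto\phi(x_\mu)$ and $h\mapsto(x_\mu a\mapsto ha)$ are mutually inverse $\scR$-linear maps, giving the isomorphism.

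The main obstacle is the reverse inclusion in the characterisation of $x_\la\cysHr$, i.e. showing that the $q$-eigenvector condition for the $T_i$, $i\in J_\la$, actually forces membership in $x_\la\cysHr$ rather than something larger. The subtlety is the presence of the $L^\bfa$ factors: left multiplication by $T_i$ does not simply permute PBW basis elements but produces correction terms (Lemma \ref{commutator}), so one must argue that these corrections cannot conspire to satisfy the eigenvalue condition outside $V$ — this is handled by choosing the PBW basis in the form $T_wL^\bfa$ (so that $T_i$ acts on the left on the $T_w$ part cleanly, the $L^\bfa$ being on the right and untouched), reducing the whole question to the classical Hecke-algebra statement for $\Hr$ that $x_\la\Hr=\{h\in\Hr\mid T_ih=qh,\ \forall i\in J_\la\}$, applied coefficient-wise in the $L^\bfa$. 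Once this reduction is made explicit, the rest is routine, and the freeness claims follow from Lemma \ref{pbw basis} by the unitriangularity of the relevant transition matrices.
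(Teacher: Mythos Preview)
Your approach is essentially the paper's: write $h=\sum_{w,\bfa}c_{w,\bfa}T_wL^\bfa$ in the PBW basis, observe that left multiplication by $T_i$ does not touch the right-hand $L^\bfa$ factor, and reduce to the classical fact $x_\la\Hr=\{h\in\Hr\mid T_ih=qh,\ \forall i\in J_\la\}$ applied coefficient-wise in $\bfa$; the Hom part is then the standard evaluation $f\mapsto f(x_\mu)$.

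Two small points. First, your freeness argument via ``unitriangular transition using Lemma \ref{commutator}'' is unnecessary: since $x_\la T_d=\sum_{w\in\fS_\la}T_{wd}$ and the cosets $\fS_\la d$ are disjoint, the elements $x_\la T_d L^\bfa$ have pairwise disjoint supports in the PBW basis $\{T_wL^\bfa\}$, so freeness is immediate (this is exactly how the paper does it, invoking only that $\{x_\la T_d\}$ is a basis of $x_\la\Hr$). Second, in the Hom argument you write that $T_ix_\mu=qx_\mu$ implies $T_ih=qh$ for $h=\phi(x_\mu)$; this does not follow, since $\phi$ is a \emph{right} module map and need not commute with left multiplication. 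You must instead use $x_\mu T_i=qx_\mu$ to get $hT_i=qh$, which by the characterisation of $\cysHr x_\mu$ (not $x_\mu\cysHr$) yields $h\in\cysHr x_\mu$. You do reach the correct conclusion $h\in x_\la\cysHr\cap\cysHr x_\mu$, so this is a slip in the side of the action rather than a gap in the strategy.
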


\begin{proof}
By Lemma \ref{pbw basis}, $\{T_wL^{\bfa}\mid w\in\fS_r, \bfa\in\mbz_m^r\}$
is an $\scR$-basis of $\cysHr$. Suppose $h=\sum_{w,\bfa}c_{w,\bfa}T_wL^{\bfa}\in \cysHr$ and $T_ih=qh,\;\forall i\in J_\la$.
Equating gives $T_i\sum_{w}c_{w,\bfa}T_w=q\sum_{w}c_{w,\bfa}T_w$ for all $i\in J_\la, \bfa\in\mbz_m^r$.
Now our assertion follows from the fact that $x_\la\sH(r) =\{h\in \sH(r) \mid T_ih=qh,\;\forall i\in J_\la\}$
(see \cite{DJ86} or  \cite[Lem.~7.33]{DDPW}).
 The basis assertion follows from the fact that $\{x_\la T_d\mid d\in\msD_\la\}$
is an $\scR$-basis of $x_\la\sH(r)$. Finally, if $f\in\Hom_{\cysHr}(x_\mu \cysHr,x_\la\cysHr)$,
then $f(x_\mu)T_i=qf(x_\mu)$ for all $i\in J_\mu$. By the first assertion, $f(x_\mu)\in\cysHr x_\mu$
and so $f(x_\mu)=h_fx_\mu$ for some $h_f\in\cysHr$. Hence, the right module homomorphism $f$ is completely
defined by the left multiplication by $h_f$.
\end{proof}

We now consider the set of all compositions of $r$ into $n$ parts:
$$\Lambda(n,r)=\big\{\la=(\la_1, \ldots, \la_n)\in\mbn^n\mid\sum_{1\leq i\leq n}\la_i=r\big\}.$$
For $\bullet\in\{\bfu,\vtg\}$, let $\sT_\bullet(n,r)=\oplus_{\la\in\La(n,r)}x_\la \busHrR$. The
endomorphism algebra
$$\aligned
\sS_\bfu(n,r)&=\cysSrR=\End_{\cysHr}(\sT_\bfu(n,r))\\
 \text{resp., } \quad\sS_\vtg(n,r)&=\afsSrR=\End_{\afsHr}(\sT_\vtg(n,r))\endaligned
 $$
is called the {\it slim cyclotomic $q$-Schur algebra}, resp., {\it the affine $q$-Schur algebra}.

Note that $\sS_\bfu(n,r)$ is indeed a centraliser algebra
of the cyclotomic $q$-Schur algebra introduced in \cite{DJM}, which is called the $q$-Schur$^{2\textsc{b}}$
algebra in \cite{DS} when $\bfu=(-1,q_0)$. Note also that the algebra $\sS_\bfu(n,r)$ has been
studied in \cite{LR}. See also \cite{Gr97,DS} and more recently
in \cite[\S5.1]{BKLW} for the $m=2$ case.

For each $\mu\in \Lanr$, let $\frak l_\mu$ denote the following composition of maps
\begin{equation}\label{1la}
\frak l_\mu: \bigoplus_{\la\in\La(n,r)}x_\la \cysHr\stackrel{\pi}{\longrightarrow}x_\mu\cysHr \stackrel{\rm id}{\longrightarrow}x_\mu\cysHr\stackrel{\iota}{\longrightarrow}\bigoplus_{\la\in\La(n,r)}x_\la \cysHr
 \end{equation}
 where $\pi, \iota$ are the canonical projection and inclusion, respectively.
This is an idempotent $\frak l_\mu^2=\frak l_\mu$ which defines a {\it centraliser algebra} of
$\cysSr$:
\begin{equation}\label{mumu}
\frak l_\mu\sS_\bfu(n,r)\frak l_\mu\cong \End_{\cysHr}(x_\mu\cysHr).
\end{equation}

The next two sections are devoted to constructing explicitly an $\scR$-basis
of $\sS_\bfu(n,r)$.

\section{The commutative algebra $\sS_\bfu(1,r)$}

In this section we deal with the special composition $\la=(r,0,\ldots,0)\in\Lanr$. Our aim
is to construct an $\scR$-basis of $x_\la\cysHr\cap \cysHr x_\la$
and to prove that the centraliser algebra $\frak l_\la\sS_\bfu(n,r)\frak l_\la\cong \sS_\bfu(1,r)$
is commutative. For simplicity, we write
 $$x_{(r)}:=x_\la=\sum_{w\in\fS_r}T_w\;\text{ and }\; {\frak l}_{(r)}:={\frak l}_\la\in\sS_\bfu(n,r).$$

First of all, we have by \eqref{commutator-x-T} that
$$T_ix_{(r)}=x_{(r)}T_i=qx_{(r)},\;\forall\,1\leq i\leq r-1.$$
This together with Lemma \ref{pbw basis} implies the following.

\begin{Lem}\label{permutation mod r}
{\rm (1)}  The $\cysHr$-modules $x_{(r)}\cysHr $ and $\cysHr x_{(r)}$ are $\scR$-free with, respectively,  bases
 $\{x_{(r)} L^\bfa \;|\; \; \bfa\in\mbz_m^r\}$ and $\{L^\bfa x_{(r)}\;|\; \; \bfa\in\mbz_m^r\}.$

{\rm (2)} Likewise, the $\afsHr$-modules $x_{(r)}\afsHr $ and $\afsHr x_{(r)}$ are $\scR$-free with, respectively,  bases
 $\{x_{(r)} X^\bfa \;|\; \; \bfa\in\mbz^r\}$ and $\{X^\bfa x_{(r)}\;|\; \; \bfa\in\mbz^r\}.$
\end{Lem}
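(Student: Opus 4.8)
The plan is to prove both statements simultaneously by the same argument, since the affine case only differs by replacing $\mbz_m^r$ with $\mbz^r$ and $L^\bfa$ by $X^\bfa$. First I would recall from \eqref{commutator-x-T} (applied with $\la=(r,0,\ldots,0)$, so that $J_\la=\{1,2,\ldots,r-1\}$) that $T_i x_{(r)}=x_{(r)}T_i=qx_{(r)}$ for all $1\leq i\leq r-1$. Combined with Lemma \ref{permutation mod} (or directly with its proof), this identifies $x_{(r)}\cysHr$ with $\{h\in\cysHr\mid T_ih=qh,\ \forall\,1\leq i\leq r-1\}$ and gives that $x_{(r)}\cysHr$ is $\scR$-free with basis $\{x_{(r)}T_dL^\bfa\mid d\in\msD_{(r)},\ \bfa\in\mbz_m^r\}$. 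Since $\fS_{(r)}=\fS_r$, the set $\msD_{(r)}$ of minimal length right coset representatives of $\fS_r$ in $\fS_r$ is just $\{1\}$, so this basis collapses to $\{x_{(r)}L^\bfa\mid \bfa\in\mbz_m^r\}$, which is statement (1) for the right module $x_{(r)}\cysHr$. Applying the anti-automorphism $\tau$ of \eqref{tau}, which fixes every $T_i$ and every $L_j$ and hence fixes $x_{(r)}$, sends $x_{(r)}L^\bfa$ to $\tau(L^\bfa)x_{(r)}=L_r^{a_r}\cdots L_1^{a_1}x_{(r)}$; since the $L_j$ commute with each other by (CH3), this equals $L^\bfa x_{(r)}$, and $\tau$ being an $\scR$-linear bijection carries the basis of $x_{(r)}\cysHr$ to a basis of $\cysHr x_{(r)}$, giving the second half of (1).

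Alternatively, and perhaps more transparently for a self-contained argument, one can argue for the left module directly: by the left-handed form of Lemma \ref{pbw basis}, $\{L^\bfa T_w\mid w\in\fS_r,\ \bfa\in\mbz_m^r\}$ is an $\scR$-basis of $\cysHr$; writing a general element $h=\sum_{\bfa,w}c_{\bfa,w}L^\bfa T_w$ and using the relations $T_iL_j=L_jT_i$ for $j\neq i,i+1$ together with Lemma \ref{commutator} to move the $T_i$ past the $L^\bfa$, one reduces the condition ``$hT_i=qh$ for all $i$'' to the classical Hecke-algebra statement that $\sum_w c_{\bfa,w}T_w\in\sH(r)x_{(r)}=\scR x_{(r)}$ for each $\bfa$ (using that $\msD_{(r)}^{-1}=\{1\}$), whence $h\in\bigoplus_{\bfa}\scR\,L^\bfa x_{(r)}$. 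The reverse containment and linear independence are immediate from Lemma \ref{pbw basis}, so $\{L^\bfa x_{(r)}\mid\bfa\in\mbz_m^r\}$ is an $\scR$-basis of $\cysHr x_{(r)}$.

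For part (2), the same reasoning applies verbatim to $\afsHr$: relations (CH1), (CH2), (CH3$'$), (CH4$'$) give $T_i x_{(r)}=x_{(r)}T_i=qx_{(r)}$ for $1\leq i\leq r-1$ by the same computation used for \eqref{commutator-x-T}, Lemma \ref{commutator} holds in $\afsHr$ with $X^\bfa$ in place of $L^\bfa$ (as noted there), and Lemma \ref{pbw basis} supplies the bases $\{X^\bfa T_w\}$ and $\{T_w X^\bfa\}$ of $\afsHr$ indexed by $\bfa\in\mbz^r$. The only change is that $\bfa$ now ranges over all of $\mbz^r$ rather than $\mbz_m^r$, and the anti-automorphism $\tau$ is replaced by the corresponding one of $\afsHr$ fixing $T_i$ and $X_j$.

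I do not expect a genuine obstacle here: this is essentially a bookkeeping lemma recording that the permutation module attached to the trivial one-part composition is a ``thin'' free module with the monomial basis in the Jucys--Murphy elements. The one point to handle with a little care is the passage from the general basis in Lemma \ref{permutation mod} to the collapsed form — i.e., checking $\msD_{(r)}=\{1\}$ and hence that the $T_d$ factor disappears — and making sure, when one argues the left-module case directly rather than via $\tau$, that moving the $T_i$'s through $L^\bfa$ via Lemma \ref{commutator} produces only terms $L^{\bfb}$ with $\bfb\in\mbz_m^r$ again, so that the reduction to the classical Hecke-algebra fact is legitimate; but since all the correction terms in Lemma \ref{commutator} are of the form $L^{\bfa s_i\pm t\alpha_i}$ with entries still lying between the original entries $a_i,a_{i+1}\in\mbz_m$, this is automatic.
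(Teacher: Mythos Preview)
Your proposal is correct and follows essentially the same approach as the paper, which in fact offers no formal proof beyond the single sentence ``This together with Lemma \ref{pbw basis} implies the following'' immediately preceding the statement. Your route via Lemma \ref{permutation mod} (specializing $\msD_{(r)}=\{1\}$) and the anti-automorphism $\tau$ is a perfectly good unpacking of that sentence; the paper's implicit argument is the even more direct observation that $x_{(r)}T_w=q^{\ell(w)}x_{(r)}$ kills the $T_w$-factor in the PBW basis $\{T_wL^\bfa\}$, with linear independence read off from $x_{(r)}L^\bfa=\sum_{w}T_wL^\bfa$. Your ``alternative'' paragraph is more laboured than necessary---you do not need to push $T_i$ through $L^\bfa$ via Lemma \ref{commutator} at all, since $L^\bfa T_w x_{(r)}=q^{\ell(w)}L^\bfa x_{(r)}$ already gives spanning---but it is not wrong.
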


The following result provides a description of elements in $x_{(r)}\cysHr\cap \cysHr x_{(r)}$.

\begin{Lem}\label{sym}
Suppose that $$z=\sum_{\bfb\in \mbz_m^r}c_\bfb x_{(r)}L^\bfb\in x_{(r)}\cysHr\cap \cysHr x_{(r)},
\;\text{ where $c_\bfb\in \scR$.}$$
Then $c_\bfb=c_{\bfb w}$ for all $\bfb\in \mbz_m^r$ and $w\in \fS_r$.
\end{Lem}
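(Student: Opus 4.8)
The plan is to reduce the assertion to the invariance of the coefficient function $\bfb\mapsto c_\bfb$ under each basic transposition $s_i$ ($1\le i\le r-1$): since the $s_i$ generate $\fS_r$, once we know $c_\bfb=c_{\bfb s_i}$ for all $i$ and all $\bfb$, iterating gives $c_\bfb=c_{\bfb w}$ for every $w\in\fS_r$. Now $z=\sum_\bfb c_\bfb x_{(r)}L^\bfb$ visibly lies in $x_{(r)}\cysHr$, so the only hypothesis to exploit is $z\in\cysHr x_{(r)}$; by Lemma~\ref{permutation mod}, applied with $\la=(r)$ (for which \eqref{Jla} gives $J_{(r)}=\{1,\dots,r-1\}$), this is equivalent to $zT_i=qz$ for all $1\le i\le r-1$. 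So I would fix $i$ and work with the single identity $zT_i-qz=0$.

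To analyse it, I would expand $zT_i=\sum_\bfb c_\bfb x_{(r)}L^\bfb T_i$ by moving $T_i$ past $L^\bfb$ via Lemma~\ref{commutator} and then absorbing the resulting leading $T_i$ into $x_{(r)}$ using $x_{(r)}T_i=qx_{(r)}$ from \eqref{commutator-x-T}. Since the coordinates of $\bfb$ lie in $\{0,\dots,m-1\}$, the exponent vectors produced by Lemma~\ref{commutator} — which interpolate convexly between $\bfb$ and $\bfb s_i$ in the coordinates $i,i+1$ and leave the others fixed — again lie in $\mbz_m^r$, so no cyclotomic reduction is needed and everything stays in the basis $\{x_{(r)}L^\bfc\mid\bfc\in\mbz_m^r\}$ of Lemma~\ref{permutation mod r}. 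Equating the coefficient of each $x_{(r)}L^\bfc$ on the two sides of $zT_i=qz$ then yields a linear system in the $c_\bfb$; the key structural point is that two indices $\bfb,\bfc$ occur in a common equation only when they agree in all coordinates other than $i,i+1$ and satisfy $b_i+b_{i+1}=c_i+c_{i+1}=:\sigma$. Hence the system decouples into finite blocks, one for each choice of the ``frozen'' coordinates together with $\sigma$.

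Inside such a block the relevant indices are parametrised by $j=b_i$ running over the interval $I=\{j:0\le j\le m-1,\ 0\le\sigma-j\le m-1\}$, which is symmetric under $j\mapsto\sigma-j$, and the partner $\bfb s_i$ corresponds precisely to $j\mapsto\sigma-j$. Writing $\gamma_j$ for the coefficient of the index with $b_i=j$ and $\delta_j:=\gamma_{\sigma-j}-\gamma_j$, I expect the block equations, after using the symmetry of $I$ to combine the two tails of each correction sum, to collapse to a telescoping recursion of the shape $\delta_k=(q-1)\sum_{j<k}\delta_j$ for $k$ strictly below the midpoint $\sigma/2$; as the sum is empty at the left endpoint of $I$, induction forces $\delta_k=0$ there, and the remaining equations (for $k\ge\sigma/2$) then read $q\,\delta_k=(q-1)\sum_{j<\sigma/2}\delta_j=0$, so that invertibility of $q$ gives $\delta_k=0$ throughout the block. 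This says exactly $c_\bfb=c_{\bfb s_i}$ for every $\bfb$, which completes the argument.

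The step I expect to be the main obstacle is the bookkeeping in the second paragraph: tracking which correction terms of Lemma~\ref{commutator} contribute to a fixed basis vector $x_{(r)}L^\bfc$, and then recasting the resulting equations — which look asymmetric because the cases $a_i<a_{i+1}$ and $a_i>a_{i+1}$ of Lemma~\ref{commutator} carry sums over differently shifted ranges — into the clean telescoping form above. Exploiting the $j\mapsto\sigma-j$ symmetry of the index set $I$ is what cancels this apparent asymmetry and lets the induction on $k$ go through.
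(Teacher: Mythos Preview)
Your approach is correct and essentially the same as the paper's: reduce to a single $s_i$, use $zT_i=qz$, expand via Lemma~\ref{commutator}, compare coefficients in the basis $\{x_{(r)}L^\bfc\}$, and induct. Your block decomposition (fixing the other coordinates and $\sigma=b_i+b_{i+1}$) is a clean way to package what the paper splits into two cases, Case~1 ($\sigma\le m-1$, where $I=\{0,\dots,\sigma\}$) and Case~2 ($\sigma>m-1$, where $I=\{\sigma-m+1,\dots,m-1\}$); the recursion $\delta_k=(q-1)\sum_{j\in I,\,j<k}\delta_j$ you anticipate is exactly what the paper derives in each case, with the induction running on the distance of $k$ from the left endpoint of $I$. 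One small point: you do not need the separate step invoking invertibility of $q$ for $k\ge\sigma/2$, since the antisymmetry $\delta_{\sigma-k}=-\delta_k$ already forces those to vanish once the lower half does.
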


\begin{proof} It suffices to prove that $c_\bfa=c_{\bfa s_i}$ for each fixed $\bfa\in \mbz_m^r$ and $1\leq i<r$.
Write $\bfa=(a_1,\ldots,a_r)$. If $a_i=a_{i+1}$, then
$\bfa=\bfa s_i$ and hence, $c_\bfa=c_{\bfa s_i}$, as desired. Now let $a_i\not=a_{i+1}$.
We may suppose $a_i<a_{i+1}$ (Otherwise, we replace $\bfa$ by $\bfa s_i$).

By Lemma \ref{commutator}, we obtain that
 $$\aligned
 z T_i=&\sum_{\bfb\in \mbz_m^r}c_\bfb x_{(r)}L^\bfb T_i\\
  =&\sum_{\bfb\in \mbz_m^r}c_\bfb x_{(r)}T_i L^{\bfb s_i} +\sum_{\bfb\in \mbz_m^r\atop b_i< b_{i+1}}(q-1)c_\bfb x_{(r)}\sum_{t=1}^{b_{i+1}-b_i}L^{\bfb s_i-t\alpha_i}\\
 &+\sum_{\bfb\in \mbz_m^r\atop b_i> b_{i+1}}(1-q)c_\bfb x_{(r)}\sum_{t=0}^{b_{i}-b_{i+1}-1}L^{\bfb s_i+t\alpha_i}.\\
\endaligned$$

Since $z\in \cysHr x_{(r)}$ and $x_{(r)}T_i=qx_{(r)}$, it follows that $qz=z T_i$ which
gives rise to the equality
$$\aligned
 \sum_{\bfb\in \mbz_m^r}q c_\bfb x_{(r)}L^\bfb=&\sum_{\bfb\in \mbz_m^r}q c_\bfb x_{(r)} L^{\bfb s_i}
+\sum_{\bfb\in \mbz_m^r\atop b_i< b_{i+1}}(q-1)c_\bfb x_{(r)}\sum_{t=1}^{b_{i+1}-b_i}L^{\bfb s_i-t\alpha_i}\\
 &\qquad\qquad\qquad \quad+\sum_{\bfb\in \mbz_m^r\atop b_i> b_{i+1}}(1-q)c_\bfb x_{(r)}\sum_{t=0}^{b_{i}-b_{i+1}-1}L^{\bfb s_i+t\alpha_i}.
\endaligned$$
 For the fixed $\bfa\in\mbz_m^r$, comparing the coefficients of $x_{(r)}L^\bfa$ on both sides
 implies that
\begin{equation} \label{coefficients-comp}
qc_\bfa=qc_{\bfa s_i}+c'+c'',
\end{equation}
 where $c'$ (resp., $c''$) denotes the coefficient of $x_{(r)}L^\bfa$ in the second
(resp., third) sum of the right hand side. In the following we calculate $c'$ and $c''$.

Consider those $\bfb=(b_1, \ldots, b_r)\in\mbz_m^r$ with $b_i< b_{i+1}$ such that
$$\bfb s_i-t\alpha_i=\bfa\;\text{ for some $1\leq t\leq b_{i+1}-b_i$.}$$
 With $\bfa$ fixed, it is equivalent to determine the precise range of $t$ for
which $\bfb=\bfa s_i-t\alpha_i$. Since $b_i=a_{i+1}-t$ and $b_{i+1}=a_i+t$, it
follows that $t\leq b_{i+1}-b_i=a_i-a_{i+1}+2t$. Hence,
$$a_{i+1}-a_i\leq t.$$
 On the other hand, since $0\leq  b_i=a_{i+1}-t\leq m-1$ and $0\leq b_{i+1}=a_i+t\leq m-1$,
it follows that $t\leq a_{i+1}$ and $t\leq m-1-a_i$. Consequently,
$$ a_{i+1}-a_i\leq t\leq {\rm min}\{a_{i+1}, m-1-a_i\}.$$
 Since $\bfa s_i-t\alpha_i=\bfa-(t-(a_{i+1}-a_i))\alpha_i$, by substituting
$t$ for $t-(a_{i+1}-a_i)$, we obtain that $\bfb=\bfa-t\alpha_i$
with
$$0\leq t\leq {\rm min}\{a_i, m-1-a_{i+1}\}.$$

Similarly, those $\bfb=(b_1, \ldots, b_r)\in\mbz_m^r$ with $b_i>b_{i+1}$ such that
$$\bfb s_i+t\alpha_i=\bfa\;\text{ for some $0\leq t\leq b_{i}-b_{i+1}-1$}$$
are simply $\bfb=\bfa s_i+t\alpha_i$ for all
$$0\leq t\leq {\rm min}\{a_i, m-1-a_{i+1}\}.$$

We treat the following two cases.

{\bf Case 1:} $a_{i}\leq m-1-a_{i+1}$. Then
$$c'=(q-1)\sum_{t=0}^{a_i}c_{\bfa-t\alpha_i}
\;\text{ and }\;
c''=(1-q)\sum_{t=0}^{a_{i}}c_{\bfa s_i+t\alpha_i}.$$
  Hence, we obtain from \eqref{coefficients-comp} that
$$\aligned
qc_\bfa&=qc_{\bfa s_i}+(q-1)\sum_{t=0}^{a_i}c_{\bfa-t\alpha_i}
+(1-q)\sum_{t=0}^{a_{i}}c_{\bfa s_i+t\alpha_i}\\
&=qc_{\bfa s_i}+(q-1)\big(c_\bfa+\sum_{t=1}^{a_i}c_{\bfa-t\alpha_i}\big)
+(1-q)\big(c_{\bfa s_i}+\sum_{t=1}^{a_{i}}c_{\bfa s_i+t\alpha_i}\big).
\endaligned$$
 This together with the fact that $\bfa s_i+t\alpha_i=(\bfa-t\alpha_i)s_i$ implies that
$$c_\bfa-c_{\bfa s_i}=\sum_{t=1}^{a_i}(q-1)\big(c_{\bfa-t\alpha_i}-c_{(\bfa-t\alpha_i)s_i}\big).$$
If $a_i=0$, then $c_\bfa=c_{\bfa s_i}$. Let $1\leq k\leq m-1$ and suppose $c_\bfa=c_{\bfa s_i}$
in case $a_i< k$. Now consider the case $a_i=k$. Since for each $1\leq t\leq a_i$,
$\bfa-t\alpha_i=:{\bf b}^{(t)}=(b_1^{(t)},\ldots,b^{(t)}_r)$ satisfies $b^{(t)}_i=a_i-t<a_i=k$,
$$b^{(t)}_i=a_i-t<a_{i+1}+t=b^{(t)}_{i+1}\;\text{ and }\;
b^{(t)}_i=a_i-t\leq m-1-a_{i+1}-t=m-1-b^{(t)}_{i+1},$$
 we have $c_{\bfa-t\alpha_i}=c_{(\bfa-t\alpha_i)s_i}$, $\forall\,1\leq t\leq a_i$.
Hence, $c_\bfa=c_{\bfa s_i}$.

\medskip

{\bf{Case 2:}} $a_{i}> m-1-a_{i+1}$. In this case,
$$c'=(q-1)\sum_{t=0}^{m-1-a_{i+1}}c_{\bfa-t\alpha_i}\;
\text{ and }\;
c''=(1-q)\sum_{t=0}^{m-1-a_{i+1}}c_{\bfa s_i+t\alpha_i}.$$
 From \eqref{coefficients-comp} it follows that
$$qc_\bfa=qc_{\bfa s_i}+(q-1)\sum_{t=0}^{m-1-a_{i+1}}c_{\bfa-t\alpha_i}
+(1-q)\sum_{t=0}^{m-1-a_{i+1}}c_{\bfa s_i+t\alpha_i}.$$
 Using an argument similar to that in Case 1, we obtain that
$$c_\bfa-c_{\bfa s_i}=\sum_{t=1}^{m-1-a_{i+1}}(q-1)\big(c_{\bfa-t\alpha_i}-c_{(\bfa-t\alpha_i)s_i}\big).$$
 Finally, proceeding by induction on $m-1-a_{i+1}$ gives the equality $c_\bfa=c_{\bfa s_i}$.
\end{proof}

\begin{Rem}\label{aff1} In Lemma \ref{afsym} in the appendix, we will modify the proof to get the affine version of the lemma.
\end{Rem}

Let $\scR[L_1, \ldots, L_r]_m$ be the free $\scR$-submodule of $\cysHr$
with basis $\{L^\bfa\mid \bfa\in\mbz_m^r\}$. Then the symmetric group $\fS_r$
acts on $\scR[L_1, \ldots, L_r]_m$ defined by
$$w\cdot L^\bfa=L^{\bfa w^{-1}},\;\forall\;w\in\fS_r,\,\bfa\in\mbz_m^r.$$
 By $\scR[L_1, \ldots, L_r]_m^{\fS_r}$ we denote the set of fixed
elements in $\scR[L_1, \ldots, L_r]_m$.

Consider the elementary symmetric polynomials
$\sigma_1, \ldots, \sigma_r$ in $L_1, \ldots, L_r$, i.~e.,
$$\sigma_i=\sigma_i(L_1,\ldots,L_r)=\sum_{1\leq j_1<\cdots <j_i\leq r}L_{j_1}\cdots L_{j_{i}},\;\forall\,1\leq i\leq r.$$
 It is clear that $\sigma_1,\ldots,\sigma_r\in \scR[L_1, \ldots, L_r]_m^{\fS_r}$.
Set
\begin{equation}\label{Gamma-m-r}
\La(r,<\!m)=\bigcup_{k=1}^{m-1}\La(r,k)=\big\{\bfa=(a_1, \ldots, a_r)\in\mbn^r \mid
\sum_{i=1}^ra_i\leq m-1\big\}.
\end{equation}
Then for each $\bfa=(a_1, \ldots, a_r)\in\La(r,<\!m)$, the monomial
$$\sigma^{\bfa}=\sigma^{\bfa}(L_1,\ldots,L_r)=\sigma_1^{a_1}\cdots\sigma_r^{a_r}$$
lies in $\scR[L_1, \ldots, L_r]_m^{\fS_r}$. Indeed, we have the following result.

\begin{Lem} \label{symm-polynomials} The set of fixed elements $\scR[L_1, \ldots, L_r]_m^{\fS_r}$
is a free $\scR$-module with basis
$$\{\sigma^\bfa\mid \bfa\in \La(r,<\!m)\}.$$
 Moreover, for each $1\leq i\leq r-1$ and $f\in\scR[L_1, \ldots, L_r]_m^{\fS_r}$,
$$T_if=fT_i.$$
\end{Lem}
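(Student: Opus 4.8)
The plan is to transport the statement into the classical theory of symmetric polynomials via the PBW-type basis of Lemma~\ref{pbw basis}, and to settle the commutation relation by a short cancellation built on the Bernstein-type formula \eqref{Bernstein-formula}. Since the $L_i$ commute by (CH3) and, by Lemma~\ref{pbw basis}, the monomials $L^\bfa$ ($\bfa\in\mbz_m^r$) are $\scR$-linearly independent in $\cysHr$, the assignment $X_i\mapsto L_i$ identifies $\scR[L_1,\ldots,L_r]_m$ with the $\scR$-span $P_{<m}$ of the monomials $X^\bfa$, $\bfa\in\mbz_m^r$, inside the genuine polynomial ring $P=\scR[X_1,\ldots,X_r]$, compatibly with the respective $\fS_r$-actions; under it $\sigma_k(L_1,\ldots,L_r)$ becomes the elementary symmetric polynomial $\sigma_k$. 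I would first record the elementary fact that, as each $\sigma_k$ is multilinear, $\sigma^\bfa$ has degree $\le|\bfa|$ in every variable and has lexicographic leading monomial $X_1^{|\bfa|}X_2^{a_2+\cdots+a_r}\cdots X_r^{a_r}$, whose first exponent is $|\bfa|$; hence $\sigma^\bfa\in P_{<m}$ if and only if $|\bfa|\le m-1$, i.e.\ $\bfa\in\La(r,<\!m)$.

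For the basis statement I would use the fundamental theorem of symmetric polynomials, valid over an arbitrary commutative ring: $P^{\fS_r}$ is $\scR$-free on $\{\sigma^\bfa\mid\bfa\in\mbn^r\}$. In particular $\{\sigma^\bfa\mid\bfa\in\La(r,<\!m)\}$ is $\scR$-linearly independent and, by the previous paragraph, lies in $(P_{<m})^{\fS_r}=\scR[L_1,\ldots,L_r]_m^{\fS_r}$. For spanning, take $0\ne f\in(P_{<m})^{\fS_r}$ and write $f=\sum_\bfa c_\bfa\sigma^\bfa$ with finitely many nonzero $c_\bfa\in\scR$. Since the leading monomials of the $\sigma^\bfa$ are pairwise distinct, the index $\bfa_0$ with $c_{\bfa_0}\ne0$ whose leading monomial $X^{\mu_0}$ is lexicographically largest contributes $X^{\mu_0}$ to $f$ with coefficient exactly $c_{\bfa_0}\ne0$; as $f\in P_{<m}$, the first exponent $|\bfa_0|$ of $\mu_0$ satisfies $|\bfa_0|\le m-1$, so $\bfa_0\in\La(r,<\!m)$. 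Then $f-c_{\bfa_0}\sigma^{\bfa_0}$ again lies in $(P_{<m})^{\fS_r}$ with a strictly shorter $\sigma$-expansion, and induction on the number of nonzero coefficients completes the argument.

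For the relation $T_if=fT_i$ it is enough, independently of the basis statement, to prove $T_ig=gT_i$ for every $g\in\scR[L_1,\ldots,L_r]_m$ fixed by the transposition $s_i$, since any $\fS_r$-invariant $f$ is $s_i$-invariant for all $i$. Writing $g=\sum_{\bfa\in\mbz_m^r}c_\bfa L^\bfa$ with $c_\bfa=c_{\bfa s_i}$ and applying \eqref{Bernstein-formula}, one gets $gT_i=\sum_\bfa c_\bfa T_iL^{\bfa s_i}+(1-q)\sum_\bfa c_\bfa L_{i+1}\,\frac{L^\bfa-L^{\bfa s_i}}{L_i-L_{i+1}}$; reindexing the first sum by $\bfa\mapsto\bfa s_i$ and using $c_{\bfa s_i}=c_\bfa$ turns it into $T_ig$, while in the second sum the terms indexed by $\bfa$ and by $\bfa s_i$ cancel, because $c_\bfa=c_{\bfa s_i}$ and $L^\bfa-L^{\bfa s_i}$ changes sign. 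Hence $gT_i=T_ig$. One may also expand through Lemma~\ref{commutator} to the same effect.

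The only delicate point is the compatibility, in the basis statement, between the truncation $P_{<m}$ and the symmetric-polynomial bookkeeping: one must guarantee that a symmetric polynomial all of whose partial degrees are $<m$ involves only $\sigma^\bfa$ with $\bfa\in\La(r,<\!m)$. This is handled by the leading-monomial analysis once one observes that the first exponent of the leading monomial of $\sigma^\bfa$ is precisely $|\bfa|$; everything else is routine, and the commutation part follows almost immediately from \eqref{Bernstein-formula}.
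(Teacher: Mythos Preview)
Your proof is correct and follows essentially the same route as the paper: both arguments establish linear independence and spanning via the lexicographic leading-monomial bookkeeping (the paper packages this as a reference to Jacobson's standard algorithm, you unpack it explicitly by passing to the genuine polynomial ring and inducting on the length of the $\sigma$-expansion), and both derive the commutation relation from the Bernstein-type formula~\eqref{Bernstein-formula}. The only minor variation is that the paper checks $T_i\sigma_j=\sigma_jT_i$ for each elementary symmetric polynomial separately and then extends to products, whereas you prove $T_ig=gT_i$ directly for any $s_i$-invariant $g$ by the cancellation argument; this is a stylistic difference rather than a substantive one.
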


\begin{proof} By a standard method on symmetric polynomials (see \cite[I, \S2.13]{Jac}), each
$f\in\scR[L_1, \ldots, L_r]_m^{\fS_r}$ can be written as
an $\scR$-linear combination of monomials
$$\sigma_1^{t_1-t_2}\sigma_2^{t_2-t_3}\cdots \sigma_{r-1}^{t_{r-1}-t_r}\sigma_r^{t_r},$$
 where $t_1,\ldots,t_r\in\mbn$ satisfy
$$m-1\geq t_1\geq t_2\geq \cdots\geq t_r\geq 0.$$
 Therefore, $\scR[L_1, \ldots, L_r]_m^{\fS_r}$ is spanned by the set
$\{\sigma^\bfa\mid \bfa\in \La(r,<\!m)\}$.

For each $\bfa\in\La(r,<\!m)$, define
$\widehat{\bfa}=(\widehat{a}_1, \ldots, \widehat{a}_r)\in\mbz_m^r$
by setting
$$\widehat{a}_i=\sum_{i\leq j\leq r}a_j,\;\forall\,1\leq i\leq r.$$
Then the leading term of $\sigma^\bfa$ as a polynomial in $L_1,\ldots,L_r$
is $L^{\widehat{\bfa}}$ and, moreover, for $\bfa,\bfb\in \La(r,<\!m)$,
$$\bfa=\bfb \Longleftrightarrow \widehat\bfa=\widehat\bfb.$$
 Consequently, the linear independence of the set $\{\sigma^\bfa\mid \bfa\in \La(r,<\!m)\}$
 follows from that of $\{L^\bfa\mid\bfa\in\mbz_m^r\}$.

Let $1\leq i,j\leq r$ with $i\not=r$. By summing up both sides of the formula \eqref{Bernstein-formula}
over $L^\bfa=L_{k_1}\ldots L_{k_j}$ with $1\leq k_1<\cdots<k_j\leq r$, we obtain that $T_i\sigma_j=\sigma_j T_i$.
Thus, the second assertion follows.
\end{proof}

\begin{Prop}\label{basis 1}
As an $\scR$-module, $x_{(r)}\cysHr \cap \cysHr x_{(r)}$ is free with basis
$$\mathcal B_{r,r}=\mathcal{B}_{r,r}^{(m)}=\{x_{(r)}\sigma^\bfa=\sigma^\bfa x_{(r)}\;|\; \bfa\in\La(r,<\!m)\}.$$
\end{Prop}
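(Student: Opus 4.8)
The plan is to prove Proposition \ref{basis 1} by combining the three preceding lemmas. First I would note that $x_{(r)}\sigma^\bfa = \sigma^\bfa x_{(r)}$ for every $\bfa\in\La(r,<\!m)$: by Lemma \ref{symm-polynomials}, $\sigma^\bfa\in\scR[L_1,\ldots,L_r]_m^{\fS_r}$ commutes with each $T_i$, hence with $x_{(r)}=\sum_{w\in\fS_r}T_w$, so these elements genuinely lie in $x_{(r)}\cysHr\cap\cysHr x_{(r)}$. This already shows $\mathcal B_{r,r}\subseteq x_{(r)}\cysHr\cap\cysHr x_{(r)}$.

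Next I would establish linear independence. By Lemma \ref{permutation mod r}(1), $\{x_{(r)}L^\bfb\mid\bfb\in\mbz_m^r\}$ is an $\scR$-basis of $x_{(r)}\cysHr$, so it suffices to show the $x_{(r)}\sigma^\bfa$ are $\scR$-linearly independent in $x_{(r)}\cysHr$. Writing $\sigma^\bfa=\sum_{\bfb}d_{\bfa,\bfb}L^\bfb$ in the basis $\{L^\bfb\mid\bfb\in\mbz_m^r\}$ of $\scR[L_1,\ldots,L_r]_m$, the proof of Lemma \ref{symm-polynomials} identifies the "leading term" $L^{\widehat{\bfa}}$, with the map $\bfa\mapsto\widehat{\bfa}$ injective on $\La(r,<\!m)$; so the change-of-basis matrix $(d_{\bfa,\bfb})$ has a triangular/nonsingular shape, giving independence of the $\sigma^\bfa$ in $\scR[L_1,\ldots,L_r]_m$ and hence of the $x_{(r)}\sigma^\bfa$ in $x_{(r)}\cysHr$.

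For spanning — which I expect to be the main point — I would take an arbitrary $z\in x_{(r)}\cysHr\cap\cysHr x_{(r)}$ and write $z=\sum_{\bfb\in\mbz_m^r}c_\bfb\,x_{(r)}L^\bfb$ using Lemma \ref{permutation mod r}(1). Lemma \ref{sym} tells us the coefficient function $\bfb\mapsto c_\bfb$ is constant on $\fS_r$-orbits in $\mbz_m^r$. Therefore $\sum_\bfb c_\bfb L^\bfb$ is a fixed point of the $\fS_r$-action on $\scR[L_1,\ldots,L_r]_m$, i.e.\ lies in $\scR[L_1,\ldots,L_r]_m^{\fS_r}$, which by Lemma \ref{symm-polynomials} is spanned by $\{\sigma^\bfa\mid\bfa\in\La(r,<\!m)\}$. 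Writing $\sum_\bfb c_\bfb L^\bfb=\sum_{\bfa}e_\bfa\sigma^\bfa$ and multiplying on the left by $x_{(r)}$ yields $z=\sum_\bfa e_\bfa\,x_{(r)}\sigma^\bfa$, so $z$ lies in the $\scR$-span of $\mathcal B_{r,r}$.

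The one subtlety I would be careful about is the passage from "$z=x_{(r)}\cdot(\text{element of }\scR[L_1,\ldots,L_r]_m)$" back and forth: the map $f\mapsto x_{(r)}f$ from $\scR[L_1,\ldots,L_r]_m$ to $x_{(r)}\cysHr$ is the $\scR$-linear isomorphism sending the basis $\{L^\bfb\}$ to the basis $\{x_{(r)}L^\bfb\}$ (Lemma \ref{permutation mod r}(1)), so it restricts to an isomorphism from $\scR[L_1,\ldots,L_r]_m^{\fS_r}$ onto its image, carrying the basis $\{\sigma^\bfa\}$ to $\{x_{(r)}\sigma^\bfa\}=\mathcal B_{r,r}$. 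Combined with Lemma \ref{sym}, this identifies $x_{(r)}\cysHr\cap\cysHr x_{(r)}$ precisely with $x_{(r)}\cdot\scR[L_1,\ldots,L_r]_m^{\fS_r}$, and the basis statement follows at once. No genuine obstacle remains once Lemmas \ref{sym} and \ref{symm-polynomials} are in hand; the work is entirely in assembling them correctly.
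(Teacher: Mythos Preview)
Your proposal is correct and follows essentially the same approach as the paper: use Lemma~\ref{sym} to show every element of the intersection has the form $x_{(r)}f$ with $f\in\scR[L_1,\ldots,L_r]_m^{\fS_r}$, then invoke Lemma~\ref{symm-polynomials} for both spanning and linear independence. Your write-up is more explicit about the isomorphism $f\mapsto x_{(r)}f$ and the commutation $x_{(r)}\sigma^\bfa=\sigma^\bfa x_{(r)}$, but the structure and the key inputs are identical to the paper's proof.
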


\begin{proof} By Lemma \ref{sym}, each element in $x_{(r)}\cysHr \cap \cysHr x_{(r)}$
has the form $x_{(r)}f$ for some $f\in\scR[L_1, \ldots, L_r]_m^{\fS_r}$. This together
with Lemma \ref{symm-polynomials} implies that $x_{(r)}\cysHr \cap \cysHr x_{(r)}$
is spanned by the set $\sB_{r,r}$. Finally, the linear independence of $\sB_{r,r}$ follows from Lemma
\ref{symm-polynomials} and the fact that $\{T_wL^\bfa\mid w\in\fS_r,\bfa\in\mbz_m^r\}$
is an $\scR$-basis of $\cysHr$.
\end{proof}

Combining the results above gives the following corollary.

\begin{Coro}\label{old2.3} The centraliser algebra $\sS_\bfu(1,r)=\frak l_{(r)}\sS_\bfu(n,r)\frak l_{(r)}$ is a commutative
$\scR$-algebra.
\end{Coro}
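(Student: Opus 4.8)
The plan is to identify $\sS_\bfu(1,r)$ with the concrete ring $x_{(r)}\cysHr\cap\cysHr x_{(r)}$ via the isomorphism \eqref{mumu}, and then to observe that Proposition \ref{basis 1} exhibits this ring as the $\scR$-span of the mutually commuting elements $x_{(r)}\sigma^\bfa$. First I would recall that $\frak l_{(r)}\sS_\bfu(n,r)\frak l_{(r)}\cong\End_{\cysHr}(x_{(r)}\cysHr)$ by \eqref{mumu}, and that the last assertion of Lemma \ref{permutation mod} (applied with $\mu=\la=(r,0,\ldots,0)$) gives an $\scR$-module isomorphism $\End_{\cysHr}(x_{(r)}\cysHr)\cong x_{(r)}\cysHr\cap\cysHr x_{(r)}$; moreover, tracing through the proof of that lemma, an endomorphism $f$ corresponds to the left multiplication by an element $h_f$ with $f(x_{(r)})=h_fx_{(r)}\in x_{(r)}\cysHr\cap\cysHr x_{(r)}$, so composition of endomorphisms corresponds to multiplication of the associated elements $h_fx_{(r)}$ inside $\cysHr$. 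Hence $\sS_\bfu(1,r)$ is isomorphic \emph{as an $\scR$-algebra} (not merely as a module) to $x_{(r)}\cysHr\cap\cysHr x_{(r)}$ with the multiplication induced from $\cysHr$.

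Given this algebra isomorphism, it suffices to prove that $x_{(r)}\cysHr\cap\cysHr x_{(r)}$ is commutative. By Proposition \ref{basis 1} it has $\scR$-basis $\mathcal B_{r,r}=\{x_{(r)}\sigma^\bfa=\sigma^\bfa x_{(r)}\mid \bfa\in\La(r,<\!m)\}$, so it is enough to check that the basis elements commute with one another. Now $\sigma^\bfa\in\scR[L_1,\ldots,L_r]_m^{\fS_r}\subseteq\scR[L_1,\ldots,L_r]_m$, and by (CH3) the $L_i$ commute with each other, so any two of the $\sigma^\bfa$ commute in $\cysHr$. Therefore, using $x_{(r)}\sigma^\bfa=\sigma^\bfa x_{(r)}$,
\[
(x_{(r)}\sigma^\bfa)(x_{(r)}\sigma^\bfb)=x_{(r)}(x_{(r)}\sigma^\bfa)\sigma^\bfb=x_{(r)}(\sigma^\bfa x_{(r)})\sigma^\bfb=x_{(r)}\sigma^\bfb(x_{(r)}\sigma^\bfa)=(x_{(r)}\sigma^\bfb)(x_{(r)}\sigma^\bfa),
\]
where I have freely rewritten $x_{(r)}\sigma^\bfc=\sigma^\bfc x_{(r)}$ and used that $\sigma^\bfa,\sigma^\bfb$ commute. (Alternatively one may invoke the last assertion of Lemma \ref{symm-polynomials}, namely $T_if=fT_i$ for $f\in\scR[L_1,\ldots,L_r]_m^{\fS_r}$, to see directly that $x_{(r)}f=fx_{(r)}$ for such $f$, which is exactly the identity $x_{(r)}\sigma^\bfa=\sigma^\bfa x_{(r)}$ already recorded in $\mathcal B_{r,r}$.) Since the basis elements pairwise commute and $\scR$ is central, the algebra is commutative.

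There is essentially no serious obstacle here; the content has already been front-loaded into Lemma \ref{sym} (which forces the coefficients of any element of the intersection to be constant on $\fS_r$-orbits) and Lemma \ref{symm-polynomials} (which identifies the $\fS_r$-invariants with the polynomial ring in the elementary symmetric functions $\sigma_i$, on which the $T_i$ act trivially from either side). The only point requiring a little care is the bookkeeping in the first paragraph: one must make sure that the module isomorphism of Lemma \ref{permutation mod} is upgraded to an \emph{algebra} (anti-)isomorphism with the correct multiplication convention, so that commutativity of $x_{(r)}\cysHr\cap\cysHr x_{(r)}$ genuinely transfers to $\sS_\bfu(1,r)$; since the target ring is commutative, the distinction between an isomorphism and an anti-isomorphism is immaterial, which is why the statement follows cleanly.
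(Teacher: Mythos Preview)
Your approach is essentially the same as the paper's: identify $\sS_\bfu(1,r)$ with $\End_{\cysHr}(x_{(r)}\cysHr)$, use Proposition~\ref{basis 1} to obtain a basis, and check commutativity via the fact that the $\sigma^\bfa$ commute with each other and with $x_{(r)}$. There is, however, one imprecision worth flagging. Your claim that the $\scR$-module isomorphism $\End_{\cysHr}(x_{(r)}\cysHr)\cong x_{(r)}\cysHr\cap\cysHr x_{(r)}$ upgrades to an algebra isomorphism (with multiplication on the right induced from $\cysHr$) is not correct in general: if $f(x_{(r)})=h_fx_{(r)}$ and $g(x_{(r)})=h_gx_{(r)}$, then $(h_fx_{(r)})(h_gx_{(r)})=P_{\fS_r}(q)\,h_fh_gx_{(r)}=P_{\fS_r}(q)\,(f\circ g)(x_{(r)})$, since $x_{(r)}^2=P_{\fS_r}(q)\,x_{(r)}$. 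Thus multiplication in the intersection differs from composition by the scalar $P_{\fS_r}(q)$, and the intersection is not even unital unless $P_{\fS_r}(q)$ is invertible in $\scR$. Your displayed identity for $(x_{(r)}\sigma^\bfa)(x_{(r)}\sigma^\bfb)$ therefore does not by itself yield $\Phi_\bfa\circ\Phi_\bfb=\Phi_\bfb\circ\Phi_\bfa$ when $P_{\fS_r}(q)$ is a zero-divisor.

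The paper avoids this by working directly in $\End_{\cysHr}(x_{(r)}\cysHr)$: it defines $\Phi_\bfa(x_{(r)}h)=\sigma^\bfa x_{(r)}h$, notes that the $\Phi_\bfa$ form an $\scR$-basis by Proposition~\ref{basis 1}, and then computes $(\Phi_\bfa\circ\Phi_\bfb)(x_{(r)})=\sigma^\bfa\sigma^\bfb x_{(r)}=\sigma^\bfb\sigma^\bfa x_{(r)}=(\Phi_\bfb\circ\Phi_\bfa)(x_{(r)})$, using only that the $\sigma^\bfa$ commute with one another (which you also observe, via (CH3)) and Lemma~\ref{symm-polynomials}. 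This is exactly the content of your argument, just organised so that the Poincar\'e polynomial never enters; replacing your first paragraph with this direct computation closes the gap.
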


\begin{proof} By \eqref{mumu}, there is an algebra isomorphism
$$\frak l_{(r)}\sS_\bfu(n,r)\frak l_{(r)}\cong \End_{\cysHr}(x_{(r)}\cysHr)=\sS_\bfu(1,r).$$
The latter as an $\scR$-module is isomorphic to $x_{(r)}\cysHr\cap\cysHr x_{(r)}$, by Lemma \ref{permutation mod}.
For each $\bfa\in\La(r,<\!m)$, if we define a right $\cysHr$-module homomorphism
$$\Phi_\bfa: \;\; x_{(r)}\cysHr\longrightarrow x_{(r)}\cysHr, \;\;\;x_{(r)}h\longmapsto \sigma^\bfa x_{(r)}h,\;\forall h\in\cysHr,$$
then the proposition above implies that $\{\Phi_\bfa\; |\; \bfa\in \La(r,<\!m)\}$
forms an $\scR$-basis for $ \End_{\cysHr}(x_{(r)}\cysHr)$. Finally, it follows from Lemma \ref{symm-polynomials} that $\Phi_\bfa\circ\Phi_\bfb=\Phi_\bfb\circ\Phi_\bfa$ for all $\bfa,\bfb\in \La(r,<\!m)$.
Hence, $\sS_\bfu(1,r)$ is commutative, as desired.
\end{proof}

For $t\in\mbn$ and $\nu=(\nu_1, \ldots, \nu_t)\in\Lambda(t, r)$, let
 \begin{equation}
 \aligned
 \La(\nu,<\!m)&=\La(\nu_1,<\!m)\times\cdots\times\La(\nu_t,<\!m)\\
 &=\{(\bfa^{(1)}, \ldots, \bfa^{(t)})\mid \bfa^{(i)}\in\La(\nu_i,<\!m),\;\forall\,1\leq i\leq t\},
\endaligned
\end{equation}
 where $\La(\nu_i,<m)=\{{\bf b}\in\mbn^{\nu_i}\mid |{\bf b}|<m\}$ as defined
 in \eqref{Gamma-m-r}.

\begin{Def}\label{X-nu}
 For each $\nu=(\nu_1, \ldots, \nu_t)\in\Lambda(t, r)$ and each $1\leq i\leq t$, let $\sigma_1^{(i)}, \ldots, \sigma_{\nu_i}^{(i)}$
denote the elementary symmetric polynomials in
$$L_{\nu_1+\cdots+\nu_{i-1}+1}, \ldots, L_{\nu_1+\cdots+\nu_{i-1}+\nu_i}.$$
For every $\bfa(\nu)=(\bfa^{(1)}, \ldots, \bfa^{(t)})\in \La(\nu,<\!m)$ with $\bfa^{(i)}=(a_1^{(i)}, \ldots, a_{\nu_i}^{(i)})$, define
$$\sigma^{\bfa(\nu)}:=(\sigma_1^{(1)})^{a_1^{(1)}}(\sigma_2^{(1)})^{a_2^{(1)}}\cdots (\sigma_{\nu_1}^{(1)})^{a_{\nu_1}^{(1)}}\cdots\cdots
(\sigma_1^{(t)})^{a_1^{(t)}}(\sigma_2^{(t)})^{a_2^{(t)}}\cdots (\sigma_{\nu_t}^{(t)})^{a_{\nu_t}^{(t)}}.
$$
\end{Def}

For example, let $m=3$, $t=4$ and $r=7$, and take $\nu=(1,2,1,3)$.
Then, for $\bfa(\nu)=((1), (1,1), (1), (1,0,1))$, $\bfb(\nu)=((1), (1,0), (0), (0,2,0))\in \La(\nu,<3)$, we have
$$\sigma^{\bfa(\nu)}=L_1\cdot\big((L_2+L_3)L_2L_3\big)\cdot L_4\cdot \big((L_5+L_6+L_7)L_5L_6L_7\big)$$
and
$$ \sigma^{\bfb(\nu)}=L_1\cdot (L_2+L_3)\cdot(L_5L_6+L_5L_7+L_6L_7)^2.$$

\vspace{.3cm}

Let $\nu=(\nu_1,\ldots,\nu_t)\in \Lambda(t,r)$ for some $t\in\mbn$. The action
of $\fS_r$ on $\scR[L_1, \ldots, L_r]_m$ restricts to an action of its subgroup $\fS_{\nu}$.
The following result will be needed in the next section. Recall the set $J_\la$ defined in \eqref{Jla}.

\begin{Prop} \label{symm-funct-composition}
{\rm (1)} The set of fixed elements $\scR[L_1, \ldots, L_r]_m^{\fS_{\nu}}$ is a free $\scR$-module
with basis
$\{ \sigma^{\bfa(\nu)}\mid \bfa(\nu)\in\La(\nu,<\!m)\}.$
 Moreover, for $i\in J_\nu$ and $f\in \scR[L_1, \ldots, L_r]_m^{\fS_{\nu}}$, we have
$T_if=fT_i$ in $\cysHr$.

{\rm (2)} Suppose that $z\in x_{\nu}\cysHr \cap \cysHr  x_{\nu}$ has the ``pure''  form
$$z=\sum_{\bfa\in\mbz_m^r}g_\bfa x_{\nu}L^\bfa=\sum_{\bfb\in\mbz_m^r}h_\bfb L^\bfb x_{\nu},\quad\text{where $g_\bfa$, $h_\bfb\in\scR$.}$$
 Then $z$ is an $\scR$-linear combination of
$x_{\nu}\sigma^{\bfa(\nu)}$ with $\bfa(\nu)\in \La(\nu,<\!m)$.
\end{Prop}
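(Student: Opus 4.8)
The plan is to bootstrap both assertions from the single‑block results already established, namely Lemma \ref{symm-polynomials} and the proof of Lemma \ref{sym}, using only that $\fS_\nu=\fS_{\nu_1}\times\cdots\times\fS_{\nu_t}$ and that $J_\nu=\{\,i\mid s_i\in\fS_\nu\,\}$ consists precisely of those $i$ lying in the interior of a (unique) block.

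For part (1) I would decompose $\scR[L_1,\ldots,L_r]_m$, as an $\scR$‑module carrying an $\fS_\nu$‑action, as a tensor product $A_1\otimes_\scR\cdots\otimes_\scR A_t$, where $A_k$ is the span of the monomials in the block variables $L_{\nu_1+\cdots+\nu_{k-1}+1},\ldots,L_{\nu_1+\cdots+\nu_k}$ with every exponent $<\!m$ (these occur among the $L^\bfa$, $\bfa\in\mbz_m^r$, of Lemma \ref{pbw basis}), and $\fS_\nu$ acts through $\fS_{\nu_k}$ on $A_k$ by permuting the block variables. Since a $\fS_{\nu_1}\times\cdots\times\fS_{\nu_t}$‑orbit on a product of permutation sets is a product of orbits, one has $\scR[L_1,\ldots,L_r]_m^{\fS_\nu}=\bigotimes_k A_k^{\fS_{\nu_k}}$; applying Lemma \ref{symm-polynomials} to each block (after relabelling variables) shows $A_k^{\fS_{\nu_k}}$ is $\scR$‑free on $\{(\sigma_1^{(k)})^{a_1}\cdots(\sigma_{\nu_k}^{(k)})^{a_{\nu_k}}\mid(a_1,\ldots,a_{\nu_k})\in\La(\nu_k,<\!m)\}$, and multiplying these bases across the blocks gives exactly $\{\sigma^{\bfa(\nu)}\mid\bfa(\nu)\in\La(\nu,<\!m)\}$. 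For the commutation claim, fix $i\in J_\nu$ and let $l$ be the block whose interior contains $i$: if $k\ne l$ then $\sigma_j^{(k)}$ involves only $L_p$ with $p\ne i,i+1$, so it commutes with $T_i$ by relation (CH4); if $k=l$, summing the Bernstein‑type identity \eqref{Bernstein-formula} over the (multilinear, $s_i$‑stable) squarefree monomials defining $\sigma_j^{(l)}$ makes the correction terms cancel in pairs, giving $T_i\sigma_j^{(l)}=\sigma_j^{(l)}T_i$ just as in the last paragraph of the proof of Lemma \ref{symm-polynomials}. As every $\sigma^{\bfa(\nu)}$ is a product of the $\sigma_j^{(k)}$, it follows that $T_if=fT_i$ for all $f\in\scR[L_1,\ldots,L_r]_m^{\fS_\nu}$.

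For part (2) I would first note that $zT_i=qz$ for all $i\in J_\nu$ — either from the right pure form $z=\sum_\bfb h_\bfb L^\bfb x_\nu$ together with $x_\nu T_i=qx_\nu$ (see \eqref{commutator-x-T}), or directly from $z\in\cysHr x_\nu$ via Lemma \ref{permutation mod}. Then I would rerun the proof of Lemma \ref{sym} verbatim, with $x_{(r)}$ replaced by $x_\nu$ and the range ``$1\le i<r$'' replaced by ``$i\in J_\nu$''. That argument uses only that $x_\nu T_i=qx_\nu$ for the $i$ in question, the commutator formula of Lemma \ref{commutator}, and the $\scR$‑linear independence of $\{x_\nu L^\bfa\mid\bfa\in\mbz_m^r\}$; the last holds because $\{T_wL^\bfa\}$ is an $\scR$‑basis of $\cysHr$, so $\sum_\bfa c_\bfa x_\nu L^\bfa=0$ forces every $c_\bfa=0$. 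One thus gets $g_\bfa=g_{\bfa s_i}$ for all $\bfa\in\mbz_m^r$ and $i\in J_\nu$, hence $g_\bfa=g_{\bfa w}$ for all $w\in\fS_\nu$ because $\fS_\nu=\langle s_i\mid i\in J_\nu\rangle$. Therefore $f:=\sum_\bfa g_\bfa L^\bfa$ lies in $\scR[L_1,\ldots,L_r]_m^{\fS_\nu}$, so by part (1) it is an $\scR$‑combination of the $\sigma^{\bfa(\nu)}$, and consequently $z=x_\nu f$ is an $\scR$‑combination of the $x_\nu\sigma^{\bfa(\nu)}$, $\bfa(\nu)\in\La(\nu,<\!m)$, as desired.

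I do not expect a genuinely new obstacle; the real work is checking that the intricate coefficient bookkeeping in the proof of Lemma \ref{sym} — the split into the cases $a_i\le m-1-a_{i+1}$ and $a_i>m-1-a_{i+1}$, and the two inductions — survives replacing $x_{(r)}$ by $x_\nu$. It does, since that argument only ever used $x_{(r)}T_i=qx_{(r)}$ (available for $x_\nu$ exactly when $i\in J_\nu$) together with linear independence of the monomials times $x_{(r)}$. The one point that deserves explicit care is precisely this linear independence for $x_\nu$, since $x_\nu$ already absorbs the parabolic subgroup $\fS_\nu$; this is where Lemma \ref{pbw basis} is used. In part (1), the claim that fixed points distribute over the block tensor decomposition is purely formal, all modules in sight being $\scR$‑free with permutation bases.
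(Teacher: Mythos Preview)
Your proposal is correct and follows essentially the same approach as the paper: for part (1) you apply Lemma~\ref{symm-polynomials} block by block (which is exactly what the paper's ``induction on $t$'' amounts to), and for part (2) you rerun the argument of Lemma~\ref{sym} with $x_\nu$ in place of $x_{(r)}$ to obtain $g_\bfa=g_{\bfa s_i}$ for $i\in J_\nu$, then invoke part (1). The paper's proof is simply a terse pointer to these same arguments; your version spells out the details (the tensor decomposition, the commutation via (CH4) and \eqref{Bernstein-formula}, and the linear independence of $\{x_\nu L^\bfa\}$) that the paper leaves implicit.
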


\begin{proof} (1) This statement follows from an argument similar to that in the proof
of Proposition \ref{basis 1} together with an induction on $t$.

(2) Clearly, for each $i\in J_\nu$ (i.~e., $1\leq i\leq r-1$ with $i\not=\sum_{j=1}^s \nu_j$,
$\forall\,1\leq s\leq t-1$), we have
$$T_ix_\nu=x_\nu T_i=qx_\nu \;\text{ and }\; zT_i=qz.$$
 By applying similar arguments to those in the proof of Lemma \ref{sym}, we obtain that
$$g_\bfa=g_{\bfa s_i},\;\forall\,i\in J_\nu.$$
 Thus, $z=x_\nu f$ with $f\in \scR[L_1, \ldots, L_r]_m^{\fS_{\nu}}$. The statement then
 follows from (1).
\end{proof}

\begin{Rem}\label{aff2} By Lemma \ref{afsym} and a similar argument,
the $\scR$-module $x_{(r)}\afsHr \cap \afsHr x_{(r)}$ contains the linearly independent set
$\{x_{(r)}\sigma^\bfa(X_1,\cdots,X_r)\mid\bfa\in\mbn^r\}$.
Hence, there is an affine version of Proposition \ref{symm-funct-composition}.
\end{Rem}

\section{A new integral basis for $\sS_\bfu(n,r)$}

In this section we describe an $\scR$-basis for $x_{\la}\cysHr \cap \cysHr x_{\mu}$ ($\la, \mu\in \Lambda(n,r)$), all of which give rise to a new basis for $\sS_\bfu(n,r)$.
We keep all the notations in previous sections.

For two compositions
$\la$ and $\mu$ of $r$, let $\msD_{\lambda,\mu}=\msD_\lambda\cap \msD_\mu^{-1}$.
Then $\msD_{\lambda,\mu}$ is the set of
minimal length $\fS_\lambda$-$\fS_\mu$ double coset representatives in $\fS_r$.

\begin{Lem}{\rm(}\cite{Ca}, \cite[Lem. 1.6]{DJ86}{\rm)}\label{Basic}
Let $\lambda, \mu$ be two compositions of $r$ and $d\in \msD_{\lambda,\mu}$.
\begin{itemize}
\item[(1)] There exist compositions $\nu(d)=\la d\cap\mu, \nu(d^{-1})=\la\cap\mu d^{-1}$ of $r$
such that $$\fS_{\nu(d)}=d^{-1}\fS_\lambda d\cap \fS_\mu,\quad \fS_{\nu(d^{-1})}=\fS_\lambda \cap d \fS_\mu d^{-1}.$$

\item[(2)] Each element $w\in \fS_\lambda d\fS_\mu$ can be uniquely written
as $w=udv=u'dv'$ with $u\in\fS_\lambda$, $v\in \msD_{\nu(d)}\cap \fS_\mu$, $u'\in \msD^{-1}_{\nu(d^{-1})}\cap \fS_{\la}$ and $v'\in \fS_\mu$,
which satisfy $\ell(w)=\ell(u)+\ell(d)+\ell(v)=\ell(u')+\ell(d)+\ell(v')$. (Here $\ell$ is the length function.)
\end{itemize}
In particular, we have
\begin{equation}\label{element}
x_\la T_{d}=\big(\sum_{u\in \msD^{-1}_{\nu(d^{-1})}\cap \fS_{\la}}T_u\big)T_{d}x_{\nu(d)}\;\text{ and }\; T_dx_{\mu}=T_{d}x_{\nu(d)}\big(\sum_{v\in \msD_{\nu(d)}\cap \fS_\mu}T_v\big).
\end{equation}
\end{Lem}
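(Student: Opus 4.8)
The statement to prove is Lemma~\ref{Basic}, the standard double-coset combinatorics for parabolic subgroups of $\fS_r$ together with its Hecke-algebra consequence \eqref{element}. Since this is classical (Curtis--Iwahori--Kilmoyer, and \cite[Lem.~1.6]{DJ86}), the plan is to assemble the proof from well-known pieces rather than to reinvent it.

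\medskip

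\textbf{Plan for parts (1) and (2).} First I would recall that every $\fS_\la$--$\fS_\mu$ double coset in $\fS_r$ contains a unique element $d$ of minimal length, and that $\fS_\la\cap d\fS_\mu d^{-1}$ is again a standard Young subgroup: this is the key structural input. Concretely, $\fS_\la=\fS_{A_1}\times\cdots$ where $A_1,A_2,\ldots$ are the consecutive blocks of $\{1,\ldots,r\}$ determined by $\la$, and similarly for $\mu$; then $\fS_\la\cap d\fS_\mu d^{-1}$ is the Young subgroup attached to the partition of $\{1,\ldots,r\}$ into the sets $A_i\cap d(B_j)$. One defines $\nu(d^{-1})$ to be the composition listing the cardinalities $|A_i\cap d(B_j)|$ in the appropriate order, and $\nu(d)$ analogously from $d^{-1}(A_i)\cap B_j$; that $d$ has minimal length in its double coset guarantees these intersection sets are again intervals after the relevant reindexing, so the intersections really are standard parabolic subgroups. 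This gives (1). For (2), I would invoke the general fact that if $H,K$ are parabolic subgroups of a Coxeter group $W$ and $d$ is the minimal-length element of $HdK$, then every $w\in HdK$ factors uniquely as $w=u\,d\,v$ with $u\in H$, $v\in (\msD_{\nu(d)}\cap K)$ where $\nu(d)$ is as above, and $\ell(w)=\ell(u)+\ell(d)+\ell(v)$; this is exactly \cite[Lem.~1.6(ii)]{DJ86} or can be derived from the theory of distinguished coset representatives (the key point being that $d$ maps positive roots of $\fS_{\nu(d)}$ to positive roots). Applying the same statement to $d^{-1}\in KdH$ and inverting gives the dual factorisation $w=u'dv'$ with $u'\in\msD^{-1}_{\nu(d^{-1})}\cap\fS_\la$, $v'\in\fS_\mu$ and the corresponding length additivity.

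\medskip

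\textbf{Deriving \eqref{element}.} Once the length-additive factorisations are in hand, the Hecke-algebra identities are immediate from $T_wT_{w'}=T_{ww'}$ whenever $\ell(ww')=\ell(w)+\ell(w')$. Indeed $x_\la T_d=\sum_{u\in\fS_\la}T_uT_d$; group the sum over $\fS_\la$ as $u=u'u''$ with $u'\in\msD^{-1}_{\nu(d^{-1})}\cap\fS_\la$ and $u''\in\fS_{\nu(d^{-1})}$, using $\ell(u)=\ell(u')+\ell(u'')$, to get $x_\la T_d=\big(\sum_{u'}T_{u'}\big)\big(\sum_{u''\in\fS_{\nu(d^{-1})}}T_{u''}\big)T_d$. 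Then push $\sum_{u''}T_{u''}=x_{\nu(d^{-1})}$ past $T_d$ using $d^{-1}\fS_{\nu(d^{-1})}d=d^{-1}\fS_\la d\cap\fS_\mu=\fS_{\nu(d)}$ together with the length additivity $\ell(u''d)=\ell(d)+\ell(d^{-1}u''d)$ valid for $u''\in\fS_{\nu(d^{-1})}$ (again because $d$ is the minimal representative), which converts $x_{\nu(d^{-1})}T_d$ into $T_d x_{\nu(d)}$. This yields $x_\la T_d=\big(\sum_{u\in\msD^{-1}_{\nu(d^{-1})}\cap\fS_\la}T_u\big)T_d x_{\nu(d)}$. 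The second identity $T_dx_\mu=T_dx_{\nu(d)}\big(\sum_{v\in\msD_{\nu(d)}\cap\fS_\mu}T_v\big)$ is proved symmetrically, factoring $x_\mu=\sum_{v\in\fS_\mu}T_v$ over cosets of $\fS_{\nu(d)}$ in $\fS_\mu$ and moving $x_{\nu(d)}$ leftward past $T_d$.

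\medskip

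\textbf{Main obstacle.} There is no deep obstacle here; the only delicate point is bookkeeping the order in which the blocks $A_i\cap d(B_j)$ occur, so that one genuinely gets \emph{standard} (consecutive-interval) Young subgroups $\fS_{\nu(d)}$, $\fS_{\nu(d^{-1})}$ rather than merely conjugates of Young subgroups — this is precisely where minimality of $\ell(d)$ in its double coset is used, and where I would be most careful to cite the precise form of the result from \cite{Ca} and \cite[Lem.~1.6]{DJ86}. Everything else is a routine transfer from the Coxeter group to the Hecke algebra via length additivity.
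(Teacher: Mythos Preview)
The paper does not give its own proof of this lemma: it is stated with attribution to \cite{Ca} and \cite[Lem.~1.6]{DJ86} and used as a black box. Your sketch is a correct reconstruction of the standard argument from those references, including the derivation of \eqref{element} via length-additive factorisations and the identity $x_{\nu(d^{-1})}T_d=T_d x_{\nu(d)}$; there is nothing to compare beyond noting that you have supplied what the paper simply cites.
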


For $\la, \mu\in \Lambda(n,r), \; d\in \msD_{\la, \,\mu}$, the double coset $\fS_\lambda d\fS_\mu$ defines an $n\times n$ matrix
\begin{equation}\label{theta}
\theta(\la,d,\mu)=\big(|R_i^{\la}\cap d(R_j^{\mu})|\big),\end{equation}
where, for $\nu=(\nu_1,\nu_2,\ldots,\nu_n)\in\La(n,r)$,
\begin{equation}\label{partial sum}
R_i^\nu=\{\widetilde\nu_{i-1}+1,\; \widetilde\nu_{i-1}+2,\ldots, \widetilde\nu_{i-1}+\nu_i\}\text{ with }\widetilde\nu_0=0,
\,\widetilde \nu_{k}=\sum_{j=1}^{k}\nu_j,
\end{equation} see, e.g., \cite[Thm 4.15]{DDPW}. Let
$$\Theta(n,r)=\big\{\theta(\la,d,\mu)\mid \la, \mu\in \Lambda(n,r), \; d\in \msD_{\la, \,\mu}\}\;\text{ and }\;
\Theta(n)=\bigcup_{r\geq0}\Theta(n,r).$$
Then $\Theta(n)$ is the set $M_n(\mbn)$ of $n\times n$ matrices with nonnegative integer coefficients.
For each $A=(a_{ij})\in\Theta(n)$, define its row and column vectors by
$$\ro(A)=\big(\sum_{j=1}^n a_{1j}, \ldots,\sum_{j=1}^n a_{nj}\big),\;
\co(A)=\big(\sum_{i=1}^n a_{i1}, \ldots, \sum_{i=1}^n a_{in}\big)\in\mbn^n.$$
Thus, if $A=\theta(\la,d,\mu)$ and $\nu(d)$ as defined in Lemma \ref{Basic}(1), then
$\la=\ro(A)$, $\mu=\co(A)$, and
\begin{equation}\label{nu(A)}
\nu(d)=\nu(A)=:(a_{11}, \ldots, a_{n1},a_{12}, \ldots, a_{n2}, \ldots, a_{1n}, \ldots, a_{nn})\in\La(n^2,r).
\end{equation}
We also write $d=d_A$. Hence, the inverse map of $\theta$  has the form
\begin{equation}\label{bij}\aligned
\theta^{-1}: \Theta(n,r)&\longrightarrow \{(\la, d, \mu)\;|\; \la, \mu\in \Lambda(n,r), \; d\in \msD_{\la, \,\mu}\} ,\\
A&\longmapsto (\row(A),\, d_A, \,\col(A)),
\endaligned
\end{equation}
and the subset $\Theta(n,r)_{\la,\mu}=\{A\in\Theta(n,r)\mid \ro(A)=\la,\co(A)=\mu\}$ identifies the double cosets in $\fS_\la\backslash\fS_{r}/\fS_\mu$.

C. Mak \cite[Thm 4.1.2]{Mak} has generalised this double coset correspondence from $\fS_r$ to $\fS_{m,r}$ ($m\geq 1)$.

The group $\fS_{m,r}=\mbz_m\wr\fS_r$ admits a presentation with generators $s_0,s_1,\ldots,s_{r-1}$ and relations:
\begin{equation}\label{presentn}\aligned
{}& s_0^m=1,\; s_i^2=1\; (1\leq i\leq r-1),\;s_0s_1s_0s_1=s_1s_0s_1s_0,\\
& s_is_{i+1}s_i=s_{i+1}s_is_{i+1}\; (1\leq i< r-1),\;s_is_j=s_js_i\;(|i-j|>1).
\endaligned
\end{equation}

 There is an alternative description of $\fS_{m,r}$, following \cite{Mak}.
Let $\xi$ be a primitive $m$-th root of unity. Set $C_m=\{1, \xi, \ldots, \xi^{m-1}\}$
and $R=\{1,2,\ldots, r\}$. Consider the set
$$C_mR=\{\xi^a i\mid 0\leq a\leq m-1, \; 1\leq i\leq r\}.$$
 Then $\fS_{m,r}$ is identified with the group of all permutations $w$ of $C_mR$ satisfying
$$w(\xi^a i)=\xi^a w(i),\;\forall\, 1\leq i\leq r.$$
  In other words, each element $w$ in $\fS_{m,r}$ is uniquely determined by the sequence
$(w(1), \ldots, w(r))$. In particular, $s_0$ corresponds to the sequence $(\xi, 2,\ldots, r)$
and for $1\leq i\leq r-1$, $s_i$ corresponds to the sequence $(1, 2,\ldots,i-1, i+1, i, i+2,\ldots, r)$

Let $\cnrm$ be the set of $n\times nm$-matrices with nonnegative integer entries sum to $r$.
We will write elements in $\cnrm$ as an $n\times n$ array with $m$-tuples of nonnegative integers as entries. More precisely,
\begin{equation}\label{da}
\cnrm =\big\{\da=(\mathbf{a}_{ij})_{n\times n} \mid \mathbf{a}_{ij}\in\mbn^m,
     1\leq i, j\leq n; \sum_{i,j=1}^n|\mathbf{a}_{ij}|=r\big\}.
\end{equation}
 Then each element $\da\in \cnrm$ gives a matrix
$|\da|:=(|\bfa_{ij}|)_{n\times n}\in \Theta(n,r).$ Thus, we obtain a matrix-valued map
\begin{equation*}
\mathfrak{f}=|\;\;|:\cnrm \lra \Theta(n,r),\quad (\mathbf{a}_{ij})\longmapsto (|\mathbf{a}_{ij}|).
\end{equation*}
For $\la, \mu\in \Lambda(n,r)$, let
$$\cnrm_{\la, \mu}=\{ \da\in \cnrm\mid \row(|\da|)=\la, \; \col(|\da|)=\mu\}.$$
The following can be seen from {\cite[Thm 4.1.2]{Mak}} and  \eqref{bij}.

\begin{Lem}\label{from da to a} Let $\la,\mu\in\La(n,r)$.
\begin{itemize}
\item[(1)] If $A=\th(\la,d,\mu)$ with $d\in \dlm$, then the fibre of $\mathfrak f$ at $A$ is
 $$\mathfrak{f}^{-1}(A)=\cnrm_{\la,\, \mu}^d:=\{\da\in \cnrm_{\la,\mu}\mid d_{|\da|}=d\}.$$

\item[(2)] There is a bijection  $\theta_{\la,\mu}=\theta_{\la, \mu}^{(m)}:\;
 \fS_\la\backslash\fS_{m,r}/ \fS_\mu\rightarrow \cnrm_{\la, \mu}$ sending a double coset
$\fS_\la w\fS_\mu$ to $ \da=(\bfa_{ij})_{n\times n},$
 where $\bfa_{ij}=(a_{ij}^{(1)}, \ldots, a_{ij}^{(m)})$ is defined by $a_{ij}^{(t)}=|\xi^t R_i^{\la}\cap w(R_j^{\mu})|$ for $1\leq t\leq m$.
\end{itemize}
\end{Lem}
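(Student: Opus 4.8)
The plan is to derive both parts by transcribing Mak's double coset classification \cite[Thm 4.1.2]{Mak} into the matrix-of-$m$-tuples notation of \eqref{da}, using the realisation of $\fS_{m,r}$ as the group of $C_m$-equivariant permutations of $C_mR$. The key device is the colour decomposition of an element: since $w\in\fS_{m,r}$ is determined by $(w(1),\ldots,w(r))$, write $w(k)=\xi^{c_k}\sigma_w(k)$ with $\sigma_w(k)\in R$ and $c_k\in\{1,\ldots,m\}$ read modulo $m$, so that $w$ is encoded by an underlying permutation $\sigma_w\in\fS_r$ together with a colour vector. A short computation gives $\sigma_{uwv}=u\sigma_w v$ and $c_k(uwv)=c_{v(k)}(w)$ for $u,v\in\fS_r$; consequently the underlying double coset $\fS_\la\sigma_w\fS_\mu$ in $\fS_r$, and -- crucially -- the integers $a_{ij}^{(t)}(w)=|\xi^tR_i^\la\cap w(R_j^\mu)|$, depend only on the double coset $\fS_\la w\fS_\mu$. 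For the latter claim one uses that $u\in\fS_\la$ commutes with the $C_m$-action and fixes each block $R_i^\la$ setwise, while $v\in\fS_\mu$ fixes each $R_j^\mu$ setwise.

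Granting this, for (2) I would verify that $\theta_{\la,\mu}$ is a bijection by the standard reconstruction argument, which is the $\fS_{m,r}$-analogue of the $m=1$ correspondence $\theta$ of \eqref{theta}. Summing $a_{ij}^{(t)}(w)$ over $t$ yields $|R_i^\la\cap\sigma_w(R_j^\mu)|$, and a further sum over $j$ (resp.\ over $i$) returns $\la_i=|R_i^\la|$ (resp.\ $\mu_j$), so $\da(w):=(\bfa_{ij}(w))\in\cnrm_{\la,\mu}$. Conversely, for any $\da\in\cnrm_{\la,\mu}$ the relations $\sum_{i,t}a_{ij}^{(t)}=\mu_j$ and $\sum_{j,t}a_{ij}^{(t)}=\la_i$ let one partition each $R_j^\mu$ into pieces of sizes $a_{ij}^{(t)}$ and send them, colour by colour, bijectively into the blocks $R_i^\la$; this builds a $w$ with $\da(w)=\da$, giving surjectivity. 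Injectivity is the least formal step: if $\da(w)=\da(w')$, first match the two block partitions of each $R_j^\mu$ by an element of $\fS_\mu$ applied on the right, then straighten the resulting bijections on the blocks $R_i^\la$ by an element of $\fS_\la$ applied on the left, concluding $\fS_\la w\fS_\mu=\fS_\la w'\fS_\mu$. All of this is exactly \cite[Thm 4.1.2]{Mak}; the only genuine obstacle is notational -- one must match Mak's labelling of the $m$ colours with the translates $\xi^tR_i^\la$ used here (in particular the index $t=m$ plays the role of the trivial colour).

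For (1), part (2) together with the classical bijection $\theta$ of \eqref{theta} identifies $\mathfrak f=|\;\;|$, restricted to $\cnrm_{\la,\mu}$, with the map on double cosets induced by the colour-forgetting projection $\fS_{m,r}\twoheadrightarrow\fS_r$, $w\mapsto\sigma_w$, because $|\bfa_{ij}|=\sum_t a_{ij}^{(t)}=|R_i^\la\cap\sigma_w(R_j^\mu)|=\theta(\la,\sigma_w,\mu)_{ij}$. Since $A=\theta(\la,d,\mu)$ and, by \eqref{bij}, $d_{|\da|}$ is by definition the minimal-length representative of the double coset $\theta^{-1}(|\da|)$, for $\da\in\cnrm_{\la,\mu}$ -- where $|\da|$ already has row vector $\la$ and column vector $\mu$ -- one has $|\da|=A$ if and only if $d_{|\da|}=d$. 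Hence $\mathfrak f^{-1}(A)=\{\da\in\cnrm_{\la,\mu}\mid |\da|=A\}=\cnrm_{\la,\mu}^d$, as asserted. The only subtlety to keep straight is the order of definitions: $\theta^{-1}$, and with it $d_{|\da|}$, is pinned down by \eqref{bij} before $\mathfrak f$ is analysed, so phrasing the fibre in terms of $d_{|\da|}$ is not circular.
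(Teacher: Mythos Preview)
Your proposal is correct and follows the same approach the paper indicates: the paper does not give a proof but simply remarks that the lemma ``can be seen from \cite[Thm 4.1.2]{Mak} and \eqref{bij}''. You have unpacked exactly this, verifying well-definedness of $\theta_{\la,\mu}$ via the $C_m$-equivariance of $\fS_\la,\fS_\mu\subseteq\fS_r$, identifying part (2) with Mak's theorem, and deducing part (1) from the observation that $\sum_t a_{ij}^{(t)}=|R_i^\la\cap\sigma_w(R_j^\mu)|$ so that $\mathfrak f$ corresponds to forgetting colours, whence $|\da|=A\iff d_{|\da|}=d$ by the bijection \eqref{bij}.
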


For each $\da\in \cnrm_{\la, \mu}$, define a composition,  as in \eqref{nu(A)},  associated with the parabolic subgroup $\fS_{\ro(|\da|)}^{d_{|\da|}}\cap\fS_{\co(|\da|)}$:
\begin{equation}\label{nu|da|}
\nu(|\da|)=\nu(d_{|\da|})=(|\bfa_{11}|,\ldots, |\bfa_{n1}|,|\bfa_{12}|,\ldots,|\bfa_{n2}|,\ldots,
|\bfa_{1n}|,\ldots, |\bfa_{nn}|)\in\Lambda(n^2,r),
\end{equation}
We also define a composition
\begin{equation}\label{nuda}
\nu(\da)=(\bfa_{11}, \ldots, \bfa_{n1}, \bfa_{12}, \ldots, \bfa_{n2}, \ldots, \bfa_{1n}, \ldots, \bfa_{nn})
\in\La(mn^2,r).\end{equation}
 Then we have the associated Young subgroup $\fS_{\nu(\da)}$ of $\fS_r$.
The following lemma generalizes \cite[Lem.~1.6]{DJ86} (for type $A$) and
can be deduced from \cite[Lem.~4.3.1(d)]{Mak}. However, we provide a proof here for completeness and later use.

 Let
\begin{equation}\label{sIsJ}
\sI=[1,n]^2,\quad \sJ=\sJ(\da)=\{(i,j)\in\sI\mid 1\leq i,j\leq n, a_{ij}^{(m)}<|\bfa_{ij}|\}.
\end{equation}
 We order $\sI$ by setting
 \begin{equation}\label{prec}
 (i,j)\preceq(i',j')\text{ if }j<j' \text{ or }i\leq i'\text{ whenever }j=j'.
 \end{equation}

For $1\leq i,j\leq n$, $1\leq k\leq m$, let
\begin{equation}\label{aijk}
a(i,j,k)=\widetilde a_{ij}+\sum_{1\leq x\leq k-1}a_{ij}^{(x)},\;\;\text{ where }\widetilde a_{ij}=\sum_{1\leq i'\leq n, \;l<j}|\bfa_{i'l}|+\sum_{1\leq i'\leq i-1}|\bfa_{i'j}|
\end{equation}
 is the partial sum of the sequence in \eqref{nu|da|} up to $|\bfa_{ij}|$, but not include $|\bfa_{ij}|$.

For any $(i,j)\in\sI$ and $1\leq k\leq m$, let
\begin{equation}\label{tijk319}
\aligned
t_1&=s_0, t_i=s_{i-1}\cdots s_2s_1t_1s_1s_2\cdots s_{i-1}, \quad2\leq i\leq r;\\
t_{ij}^{(k)}&=\begin{cases}
t_{a+1}t_{a+2}\cdots t_{a+a_{ij}^{(k)}},&\text{ if }(i,j)\in \sJ, k\leq m-1, a_{ij}^{(k)}\geq1;\\
1,&\text{ otherwise,}\end{cases}
\endaligned\end{equation}
where $a=a(i,j,k)$.
\begin{Lem}\label{7 crucial} For $\la, \mu\in \Lambda(n,r)$ and $w\in \fS_{m,r}$, if $\da=({\bf a}_{ij})=\theta_{\la, \mu}(\fS_\la w\fS_\mu)\in\cnrm_{\la, \mu}$,
then $w^{-1}\fS_\la w\cap \fS_\mu=\fS_{\nu(\da)}.$
Moreover, we may choose
\begin{equation}\label{w=dt}
w=d_{|\da|}\cdot\prod_{(i,j)\in \mathcal{J}}\big((t_{ij}^{(1)})^{1}(t_{ij}^{(2)})^{2}\cdots (t_{ij}^{(m-1)})^{m-1}\big)
\end{equation}
(noting $(t_{ij}^{(m)})^{m}=1$).
\end{Lem}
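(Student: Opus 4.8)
The plan is to reduce the statement to the type $A$ case recorded in Lemma \ref{Basic} by peeling off the cyclic parts of $w$. First I would translate the combinatorial data: for $w\in\fS_{m,r}$ with $\da=\theta_{\la,\mu}(\fS_\la w\fS_\mu)$, recall that, under the identification of $\fS_{m,r}$ with permutations of $C_mR$, each $w$ is determined by the sequence $(w(1),\ldots,w(r))$, and $a_{ij}^{(t)}=|\xi^tR_i^\la\cap w(R_j^\mu)|$. Writing $p:\fS_{m,r}\to\fS_r$ for the natural projection (which forgets the $\xi$-exponents), I would first observe $|\da|=\theta(\la,p(w),\mu)$, so $d_{|\da|}=d_A$ is exactly the minimal length double coset representative of $\fS_\la p(w)\fS_\mu$ in $\fS_r$. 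By Lemma \ref{Basic}(1), $p(w)^{-1}\fS_\la p(w)\cap\fS_\mu=\fS_{\nu(|\da|)}$ with $\nu(|\da|)$ as in \eqref{nu|da|}.

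Next I would prove the identity $w^{-1}\fS_\la w\cap\fS_\mu=\fS_{\nu(\da)}$. The inclusion of $\fS_{\nu(\da)}$ is clear: if a permutation in $\fS_r$ fixes each block $R^{\co(|\da|)}_{(i,j)}$ (indexing by the ordered pairs via $\preceq$) and moreover fixes, within that block, each of the $m$ sub-blocks of sizes $a_{ij}^{(1)},\ldots,a_{ij}^{(m)}$ cut out by the $\xi$-levels, then conjugating by $w$ keeps it inside $\fS_\la$, because $w$ sends the $k$-th sub-block of $R^{\co}_{(i,j)}$ into $\xi^k R_i^\la$ and in particular into $R_i^\la$. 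For the reverse inclusion, I would argue that if $u\in\fS_\mu$ and $wuw^{-1}\in\fS_\la$ then, since $wuw^{-1}\in\fS_\la$ already forces $u\in p(w)^{-1}\fS_\la p(w)\cap\fS_\mu=\fS_{\nu(|\da|)}$, the element $u$ preserves each block $R^{\co(|\da|)}_{(i,j)}$; and then the finer constraint that $w$ must send the $\xi$-level-$k$ part of $w(R^\mu_j)\cap R^\la_i$ back consistently forces $u$ to preserve each of the $m$ sub-blocks. Concretely, $u$ commutes with the "rotation" action recording $\xi$-exponents because $u\in\fS_r$ (it carries no $\xi$-twist), so $u$ must send a point at $\xi$-level $k$ relative to $w$ to another point at level $k$; this is precisely the statement that $u\in\fS_{\nu(\da)}$.

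For the explicit representative \eqref{w=dt}, I would proceed as follows. Note that $t_i$ as defined in \eqref{tijk319} is the element of $\fS_{m,r}$ acting as multiplication by $\xi$ in the $i$-th coordinate and trivially elsewhere (this is immediate from $t_1=s_0$ and the conjugation formula). Hence $(t_{ij}^{(k)})^k$ multiplies by $\xi^k$ each of the coordinates in the range $[a(i,j,k)+1,\,a(i,j,k)+a_{ij}^{(k)}]$, i.e. it installs the correct $\xi$-exponent on the $k$-th sub-block of the $(i,j)$-block. Set $w_0=d_{|\da|}\cdot\prod_{(i,j)\in\sJ}\big((t_{ij}^{(1)})^1\cdots(t_{ij}^{(m-1)})^{m-1}\big)$; I would check $p(w_0)=d_{|\da|}$ (each $t_i$ projects to $1$), so $|\theta_{\la,\mu}(\fS_\la w_0\fS_\mu)|=\theta(\la,d_{|\da|},\mu)=|\da|$, and then compute the $\xi$-exponents: the $a(i,j,k)$ were defined (see \eqref{aijk}) precisely as the partial sums so that, reading off $(w_0(1),\ldots,w_0(r))$, the coordinates landing in $R_i^\la$ coming from $R_j^\mu$ split into consecutive runs of lengths $a_{ij}^{(1)},\ldots,a_{ij}^{(m)}$ carrying exponents $1,2,\ldots,m$ (equivalently $0$ for the last run, since $\xi^m=1$ — note $(t_{ij}^{(m)})^m=1$). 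Therefore $\theta_{\la,\mu}(\fS_\la w_0\fS_\mu)=\da$, and since $\theta_{\la,\mu}$ is a bijection (Lemma \ref{from da to a}(2)) we get $\fS_\la w_0\fS_\mu=\fS_\la w\fS_\mu$, so $w$ may be taken to be $w_0$.

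The main obstacle I anticipate is the bookkeeping in the last paragraph: verifying that the particular ordering $\preceq$ on $\sI$ in \eqref{prec}, the definition of $\widetilde a_{ij}$ in \eqref{aijk}, and the placement of the cyclic generators $t_i$ interact so that $w_0$ has image exactly $d_{|\da|}$ under $p$ \emph{and} produces exactly the prescribed exponent matrix $\da$ — getting the indices and the order of the product right (so that distinct $t_{ij}^{(k)}$ act on disjoint coordinate sets and hence commute appropriately) is where the real care is needed. The group-theoretic identity $w^{-1}\fS_\la w\cap\fS_\mu=\fS_{\nu(\da)}$ is comparatively soft once one has set up the dictionary between $\xi$-levels and the sub-block decomposition.
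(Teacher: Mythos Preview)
Your approach is correct and reaches the same conclusion as the paper, but the organization differs somewhat. For the first assertion, the paper works directly with the product decompositions $\fS_\la=\prod_i\fS_\la^{(i)}$ and $\fS_\mu=\prod_j\fS_\mu^{(j)}$, so that
\[
w^{-1}\fS_\la w\cap\fS_\mu=\prod_{i,j}\big(w^{-1}\fS_\la^{(i)}w\cap\fS_\mu^{(j)}\big),
\]
and then identifies each factor as $\prod_{t=1}^m\fS_{ij}^{(t)}$, where $\fS_{ij}^{(t)}$ is the symmetric group on the set $\xi^t w^{-1}(R_i^\la)\cap R_j^\mu$ of cardinality $a_{ij}^{(t)}$. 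You instead pass through the projection $p:\fS_{m,r}\to\fS_r$ to invoke the type~$A$ result (Lemma~\ref{Basic}) first, obtaining $u\in\fS_{\nu(|\da|)}$, and then refine by the $\xi$-level constraint. Both routes rest on the same observation that an element of $\fS_r$ preserves each $\xi$-level, so membership in $w^{-1}\fS_\la w$ forces preservation of the finer partition into sub-blocks of sizes $a_{ij}^{(1)},\ldots,a_{ij}^{(m)}$. Your route makes the dependence on the type~$A$ case explicit, which is conceptually clean; the paper's route avoids introducing $p$ at the cost of redoing the type~$A$ argument inside the larger group.

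One caution: your line ``$p(w)^{-1}\fS_\la p(w)\cap\fS_\mu=\fS_{\nu(|\da|)}$'' tacitly assumes $p(w)=d_{|\da|}$ (or at least lies in $\fS_\la d_{|\da|}$), since otherwise the intersection is only a $\fS_\mu$-conjugate of $\fS_{\nu(|\da|)}$. The paper's proof has the same implicit assumption when it asserts that $\fS_{ij}^{(k)}$ is generated by the specific simple reflections $s_{a+1},\ldots,s_{a+a_{ij}^{(k)}-1}$: this requires the sets $\xi^k w^{-1}(R_i^\la)\cap R_j^\mu$ to be consecutive intervals, which holds precisely for the representative constructed in the ``moreover'' clause. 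So in both treatments the literal equality $w^{-1}\fS_\la w\cap\fS_\mu=\fS_{\nu(\da)}$ is established for that specific $w$; for a general representative one only gets conjugacy within $\fS_\mu$. For the explicit representative \eqref{w=dt}, your argument and the paper's are essentially identical: compute $(t_{ij}^{(k)})^k$ in one-line notation, observe that the product installs the correct $\xi$-exponents on consecutive runs, and verify $\theta_{\la,\mu}(\fS_\la w\fS_\mu)=\da$.
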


\begin{proof} Write $\la=(\la_1, \ldots, \la_n)$ and $\mu=(\mu_1, \ldots, \mu_n)$.
For $1\leq i\leq n$, let $\fS_\la^{(i)}$ be the subgroup of $\fS_\la$ generated by
$s_{\widetilde\la_{i-1}+1}, \ldots, s_{\widetilde\la_{i-1}+\la_i-1}$ ($\widetilde\la_{i-1}=\sum_{1\leq j\leq i-1}\la_j$
and $\la_0=0$ by convention). Similarly, we obtain the subgroups $\fS_\mu^{(i)}$ of $\fS_\mu$.
Then
$$\fS_\la=\fS_\la^{(1)}\times \cdots \times\fS_\la^{(n)}\;\text{ and }\;
\fS_\mu=\fS_\mu^{(1)}\times \cdots \times\fS_\mu^{(n)}.$$
This implies that
$w^{-1}\fS_\la w=(w^{-1}\fS_\la^{(1)}w) \times \cdots \times (w^{-1}\fS_\la^{(n)}w).$
Hence,
\begin{equation}\label{sym form}
\aligned
w^{-1}\fS_\la w\cap \fS_\mu&=(w^{-1}\fS_\la w\cap \fS_\mu^{(1)})\times\cdots\times (w^{-1}\fS_\la w\cap \fS_\mu^{(n)})\\
&=\big(\prod_{i=1}^n w^{-1}\fS_\la^{(i)} w\cap \fS_\mu^{(1)}\big) \times \cdots\times
\big(\prod_{i=1}^n w^{-1}\fS_\la^{(i)} w\cap \fS_\mu^{(n)}\big).
\endaligned
\end{equation}

By the definition, $\fS_\la^{(i)}$ is the subgroup of $\fS_{m,r}$ consisting of
all the elements which fix the set $C_m(R\setminus R_i^{\la})$, and map $R_i^{\la}$ onto $R_i^{\la}$
(Thus, they map $\xi^tR_i^{\la}$ onto $\xi^tR_i^{\la}$ for all $1\leq t\leq m$).
Consequently, $w^{-1}\fS_\la^{(i)}w$ is a subgroup of $\fS_{m,r}$ consisting of
all the elements which fix the set $C_mw^{-1}(R\backslash R_i^{\la})$ and map $w^{-1}(R_i^{\la})$
onto $w^{-1}(R_i^{\la})$ (They also map $\xi^tw^{-1}(R_i^{\la})$ onto $\xi^tw^{-1}(R_i^{\la})$ for $1\leq t\leq m$).

Therefore, for $1\leq i, j\leq n$, we can write
 $$w^{-1}\fS_\la^{(i)} w\cap \fS_\mu^{(j)}=\prod_{t=1}^{m}\fS_{ij}^{(t)},$$
 where $\fS_{ij}^{(t)}$ is the subgroup formed by all elements in $\fS_{m,r}$ which
permute elements in $\xi^tw^{-1}(R_i^{\la})\cap  R_j^{\mu}$ and fix other elements in
$C_mw^{-1}(R_i^{\la})\cap  R_j^{\mu}$.

 By Lemma \ref{from da to a}(2), for $1\leq i, j\leq n$ and
 $1\leq k\leq m$,
 $$|\xi^kw^{-1}(R_i^{\la})\cap  R_j^{\mu}|=|\xi^kR_i^{\la}\cap w( R_j^{\mu})|=a_{ij}^{(k)}.$$
 Therefore, $\fS_{ij}^{(k)}$ is the subgroup of $\fS_{m,r}$
generated by $s_{a+1}, s_{a+2},\ldots, s_{a+a_{ij}^{(k)}-1}$,
where $a=a(i,j,k)$ as defined in \eqref{aijk}.
 This together with \eqref{sym form} implies that $w^{-1}\fS_\la w\cap \fS_\mu=\fS_{\nu(\da)}$.

 It remains to prove the last assertion. Assume now $w$ is given as in \eqref{w=dt}. For each element $u\in\fS_{m,r}$, we write
$u=(u(1), \ldots, u(r))$. Then, for $(i,j)\in\sJ$,$1\leq k\leq m$ and $a=a(i,j,k)$,
$$(t_{ij}^{(k)})^k=(1,2, \ldots, a, \xi^k(a+1), \xi^k(a+2), \ldots, \xi^k(a+a_{ij}^{(k)}), a+a_{ij}^{(k)}+1, a+a_{ij}^{(k)}+2, \ldots, r).$$
If we put $\beta_{ij}^{(k)}=(\xi^k(a+1), \xi^k(a+2), \ldots, \xi^k(a+a_{ij}^{(k)}))$,
then (after concatenation)
$$d_{|\da|}^{-1}w:=y=(\beta_{11}^{(1)}, \ldots,  \beta_{1,1}^{(m)}, \beta_{21}^{(1)}, \ldots,\beta_{2,1}^{(m)}, \ldots \beta_{n-1,n}^{(1)}, \ldots, \beta_{n-1,n}^{(m)},\beta_{n,n}^{(1)}, \ldots, \beta_{nn}^{(m)}).$$
Since $|\bfa_{ij}|=|d_{|\da|}^{-1}R_i^\la\cap  R_j^\mu|$, it follows that
 $$a_{ij}^{(k)}=|\xi^k(d_{|\da|}^{-1}R_i^{\la})\cap y(R_j^{\mu})|=|\xi^kR_i^{\la}\cap wR_j^{\mu}|,$$ for all $1\leq i,j\leq n$
 and $1\leq k\leq m$. Hence, $\th_{\la\mu}(\fS_\la w\fS_\mu)=\da$.
\end{proof}

We need more notations. For $k\in\mbn$ and $\lambda=(\lambda_1, \ldots, \lambda_m)\in \Lambda(m,k)$, let
$$b_i=\sharp\{t\in\{1,\ldots, m-1\}\mid \lambda_1+\cdots+\lambda_t\geq i\}\quad(i=1,2,\ldots,k).$$
 Then
$m-1\geq b_1\geq b_2\geq \cdots\geq b_k.$
 In other words, with the notation in \eqref{partial sum}, $(b_1,\ldots,b_k)$ is the partition dual to
$(\widetilde\la_{m-1},\ldots,\widetilde\la_2,\widetilde\la_1).$
 Finally, define
\begin{equation}\label{dd}
 \ddot{\la}=(b_1-b_2, \ldots, b_{k-1}-b_k, b_k),
\end{equation}
which clearly lies in $\La(k,<\!m)$. Indeed, it is easy to check that the correspondence
$\la\mapsto\ddot{\la}$ induces a bijection
\begin{equation} \label{bijection-lamda-gamma}
g_{m,k}: \Lambda(m,k)\longrightarrow \La(k,<\!m).
\end{equation}

For example, let $m=3$, $k=6$ and take $\la=(2, 3, 1)\in \Lambda(3, 6)$. Then
$$(b_1,b_2,b_3,b_4,b_5,b_6)=(2, 2, 1, 1, 1, 0)\;\text{ and }\;\ddot{\la}=(0, 1, 0, 0, 1, 0).$$

Let $\da=(\mathbf{a}_{ij})_{n\times n}\in \cnrm$ with $\bfa_{ij}=(a_{ij}^{(1)}, \ldots, a_{ij}^{(m)})$
for $1\leq i,j\leq n$. By the construction above, each $\bfa_{ij}$ as a composition
in $\Lambda(m,|\bfa_{ij}|)$ gives rise to a vector $\ddot{\bfa}_{ij}\in\La(|\bfa_{ij}|,<\!m)$ and a ``matrix'' $\ddot\da=(\ddot\bfa_{ij})$.
Thus, by juxtaposition, $\da$ defines a composition
$$\bfa(\ddot\da):=(\ddot{\bfa}_{11},\ldots, \ddot{\bfa}_{n1},\ddot{\bfa}_{12},\ldots,\ddot{\bfa}_{n2},\ldots,
\ddot{\bfa}_{1n},\ldots, \ddot{\bfa}_{nn})\in\La(\nu(|\da|),<\!m),$$
 which\footnote{Unlike the compositions $\nu(|\da|), \nu(\da)$ defined in \eqref{nu|da|},\eqref{nuda}, $\bfa(\ddot\da)$ is not a composition of $r$.} further defines, in the notation of Definition \ref{X-nu}, an element
\begin{equation}\label{sigddA}
\sigma^{\ddot\da}:=\sigma^{\bfa(\ddot\da)}\in \scR[L_1,\ldots,L_r]_m^{\fS_{\nu(|\da|)}}.
\end{equation}

\begin{Lem}\label{xim} Suppose $\la, \mu\in \Lambda(n,r)$, $d\in \msD_{\la, \mu}$ and let $\nu=\la d\cap\mu$.
Then for each $\bfa(\nu)\in \La(\nu,<\!m)$, there is
a unique $\da\in \cnrm_{\la, \mu}^d$ such that $\sigma^{\bfa(\nu)}=\sigma^{\ddot{\da}}$.
\end{Lem}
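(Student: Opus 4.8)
The plan is to show that, once all the indexing is unwound, the assignment $\da\mapsto\sigma^{\ddot\da}$ restricted to the fibre $\cnrm_{\la,\mu}^d$ is nothing but the product of the bijections $g_{m,k}$ of \eqref{bijection-lamda-gamma}, one factor for each entry of the matrix of $d$, and then to read off existence and uniqueness from this.

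First I would identify the two index sets. Put $A=\theta(\la,d,\mu)=(a_{ij})\in\Theta(n,r)$, so that by \eqref{nu(A)} the composition $\nu=\la d\cap\mu=\nu(d)=\nu(A)=(a_{11},\ldots,a_{n1},a_{12},\ldots,a_{n2},\ldots,a_{1n},\ldots,a_{nn})$. By Lemma \ref{from da to a}(1), $\cnrm_{\la,\mu}^d=\mathfrak f^{-1}(A)$, so $\da=(\bfa_{ij})$ lies in $\cnrm_{\la,\mu}^d$ exactly when $\bfa_{ij}\in\Lambda(m,a_{ij})$ for all $1\le i,j\le n$; in particular $|\da|=A$ for every such $\da$, whence $\nu(|\da|)=\nu$ and $\sigma^{\ddot\da}\in\scR[L_1,\ldots,L_r]_m^{\fS_\nu}$. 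On the other hand, by definition $\La(\nu,<\!m)=\prod_{(i,j)}\La(a_{ij},<\!m)$, with the blocks indexed by $(i,j)$ running through $(1,1),(2,1),\ldots,(n,1),(1,2),\ldots,(n,n)$ — the same order as the parts of $\nu$ and as the blocks of $\bfa(\ddot\da)$.

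Next I would define $\Psi\colon\cnrm_{\la,\mu}^d\to\La(\nu,<\!m)$ by $\Psi(\da)=\bfa(\ddot\da)=(\ddot\bfa_{11},\ldots,\ddot\bfa_{n1},\ldots,\ddot\bfa_{nn})$. Since $\ddot\bfa_{ij}=g_{m,a_{ij}}(\bfa_{ij})$, each $g_{m,a_{ij}}\colon\Lambda(m,a_{ij})\to\La(a_{ij},<\!m)$ is a bijection by \eqref{bijection-lamda-gamma} (the zero-entry factors being trivial singletons), and the block orderings match, $\Psi$ is the product of these bijections and hence itself a bijection. Given $\bfa(\nu)\in\La(\nu,<\!m)$, set $\da=\Psi^{-1}(\bfa(\nu))$; then $\bfa(\ddot\da)=\bfa(\nu)$, so $\sigma^{\ddot\da}=\sigma^{\bfa(\ddot\da)}=\sigma^{\bfa(\nu)}$, which gives existence. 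For uniqueness, if $\da'\in\cnrm_{\la,\mu}^d$ also satisfies $\sigma^{\ddot{\da'}}=\sigma^{\bfa(\nu)}$, then since $\nu(|\da'|)=\nu$ both $\bfa(\ddot{\da'})$ and $\bfa(\nu)$ belong to $\La(\nu,<\!m)$, and the linear independence of $\{\sigma^{\bfa(\nu)}\mid\bfa(\nu)\in\La(\nu,<\!m)\}$ from Proposition \ref{symm-funct-composition}(1) forces $\bfa(\ddot{\da'})=\bfa(\nu)=\bfa(\ddot\da)$; injectivity of $\Psi$ then gives $\da'=\da$.

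Every step here is bookkeeping, so I do not expect a genuine obstacle; the two points that need care are (i) checking $|\da|=A$ throughout the fibre, so that $\sigma^{\ddot\da}$ and $\sigma^{\bfa(\nu)}$ really lie in the same invariant subring $\scR[L_1,\ldots,L_r]_m^{\fS_\nu}$ and can be compared, and (ii) keeping the column-major block order in $\nu(|\da|)$, $\bfa(\ddot\da)$ and $\La(\nu,<\!m)$ consistent. Point (i) is exactly what makes the linear-independence half of Proposition \ref{symm-funct-composition}(1) applicable to the uniqueness claim.
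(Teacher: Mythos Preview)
Your proof is correct and follows essentially the same approach as the paper: both identify the fibre $\cnrm_{\la,\mu}^d$ with $\prod_{i,j}\Lambda(m,a_{ij})$ via $|\da|=A=\theta(\la,d,\mu)$, observe that $\da\mapsto\bfa(\ddot\da)$ is the product of the bijections $g_{m,a_{ij}}$ from \eqref{bijection-lamda-gamma}, and take $\da$ to be the inverse image of $\bfa(\nu)$. The paper's proof is terser and leaves the passage from $\sigma^{\ddot{\da'}}=\sigma^{\bfa(\nu)}$ back to $\bfa(\ddot{\da'})=\bfa(\nu)$ implicit, whereas you explicitly invoke the linear independence from Proposition~\ref{symm-funct-composition}(1); this extra care is harmless and arguably makes the uniqueness argument cleaner.
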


\begin{proof} Let $A=(a_{ij})=\theta(\la,d,\mu)\in \Theta(n,r)$, i.~e.,
$(\row(A),\, d_A, \,\col(A))=(\la,d,\mu)$. Then
$$\nu=\la d\cap\mu=\nu(A)=(a_{11}, \ldots, a_{n1},a_{12}, \ldots, a_{n2}, \ldots, a_{1n}, \ldots, a_{nn} )\in \Lambda(n^2, r).$$
Applying the bijections $g_{m,a_{ij}}$ defined as in \eqref{bijection-lamda-gamma}
induces a bijection
$$g_A:\cnrm_{\la, \mu}^d\longrightarrow \La(\nu(A),<\!m),\; \da\longmapsto \bfa(\ddot\da),$$
noting that $|\da|=A$. Hence, $\da=g_A^{-1}\big(\bfa(\nu)\big)$.
\end{proof}

The lemma above together with Proposition \ref{symm-funct-composition}(1) gives the following result.

\begin{Prop}\label{linear independent 0} For $\la, \mu\in \Lambda(n,r)$ and
$d\in \msD_{\la, \mu}$, the set
 $$\{\sigma^{\ddot\da}\mid \da \in \Theta_m(n,r)_{\la, \mu}^d\}$$
is linearly independent. Moreover, for each $\da\in \cnrm$,
\begin{equation} \label{comm-x-nu-sigma}
x_{\nu(d_{|\da|})}\sigma^{\ddot\da}=\sigma^{\ddot\da}x_{\nu(d_{|\da|})}.
\end{equation}
\end{Prop}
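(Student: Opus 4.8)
The plan is to derive both assertions as formal consequences of Lemma~\ref{xim} and Proposition~\ref{symm-funct-composition}(1); essentially no new computation is required. To begin I would fix $A=\theta(\la,d,\mu)\in\Theta(n,r)$ and set $\nu=\la d\cap\mu=\nu(A)\in\La(n^2,r)$ as in \eqref{nu(A)}, so that by Lemma~\ref{from da to a}(1) the index set $\Theta_m(n,r)_{\la,\mu}^d$ is precisely the fibre $\cnrm_{\la,\mu}^d=\mathfrak f^{-1}(A)$, and $\nu(|\da|)=\nu$ for every $\da$ in it.

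For linear independence, I would first observe that by \eqref{sigddA} and the construction preceding it one has $\sigma^{\ddot\da}=\sigma^{\bfa(\ddot\da)}$ with $\bfa(\ddot\da)\in\La(\nu,<\!m)$, so $\da\mapsto\sigma^{\ddot\da}$ sends $\cnrm_{\la,\mu}^d$ into $\{\sigma^{\bfa(\nu)}\mid\bfa(\nu)\in\La(\nu,<\!m)\}$. Lemma~\ref{xim} says exactly that this assignment is a bijection onto that set: its existence clause gives surjectivity and its uniqueness clause gives injectivity (equivalently, the map factors as the bijection $g_A$ from the proof of Lemma~\ref{xim} followed by $\bfa(\nu)\mapsto\sigma^{\bfa(\nu)}$). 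Since Proposition~\ref{symm-funct-composition}(1) identifies $\{\sigma^{\bfa(\nu)}\mid\bfa(\nu)\in\La(\nu,<\!m)\}$ as an $\scR$-basis of $\scR[L_1,\ldots,L_r]_m^{\fS_\nu}$, it is in particular $\scR$-linearly independent, hence so is its bijective image $\{\sigma^{\ddot\da}\mid\da\in\Theta_m(n,r)_{\la,\mu}^d\}$.

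For the commutation relation \eqref{comm-x-nu-sigma}, I would use that $x_{\nu(d_{|\da|})}=\sum_{w\in\fS_{\nu(|\da|)}}T_w$ and that $\fS_{\nu(|\da|)}$ is the standard parabolic subgroup generated by the $s_i$ with $i\in J_{\nu(|\da|)}$, so any $w\in\fS_{\nu(|\da|)}$ admits a reduced expression $w=s_{i_1}\cdots s_{i_\ell}$ with all $i_t\in J_{\nu(|\da|)}$ and hence $T_w=T_{i_1}\cdots T_{i_\ell}$. Since $\sigma^{\ddot\da}\in\scR[L_1,\ldots,L_r]_m^{\fS_{\nu(|\da|)}}$ by \eqref{sigddA}, Proposition~\ref{symm-funct-composition}(1) gives $T_{i_t}\sigma^{\ddot\da}=\sigma^{\ddot\da}T_{i_t}$ for each such $i_t$; commuting $\sigma^{\ddot\da}$ past $T_w$ one factor at a time and summing over $w$ yields $x_{\nu(d_{|\da|})}\sigma^{\ddot\da}=\sigma^{\ddot\da}x_{\nu(d_{|\da|})}$, valid for every $\da\in\cnrm$.

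The only step demanding a little care --- and the closest thing here to an obstacle --- is the indexing bookkeeping: checking that the composition $\nu$ furnished by Lemma~\ref{xim} coincides with $\nu(A)=\nu(|\da|)$ from \eqref{nu(A)} and \eqref{nu|da|}, and that the simple reflections appearing in reduced words for elements of $\fS_{\nu(|\da|)}$ all lie in $J_{\nu(|\da|)}$, so that Proposition~\ref{symm-funct-composition}(1) applies verbatim. Both are immediate from the definitions in \eqref{Jla} and \eqref{partial sum}, so no genuine difficulty arises.
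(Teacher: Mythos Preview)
Your proposal is correct and is exactly the argument the paper intends: it simply records that the result follows from Lemma~\ref{xim} together with Proposition~\ref{symm-funct-composition}(1), and you have spelled out precisely those details (the bijection $g_A$ for linear independence, and the commutation $T_i\sigma^{\ddot\da}=\sigma^{\ddot\da}T_i$ for $i\in J_{\nu(|\da|)}$ for \eqref{comm-x-nu-sigma}). There is no discrepancy in approach.
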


\begin{Example} Let $n=2$, $m=3$, and $r=11$. Choose
$$\da=
  \begin{pmatrix}
    (1, 1, 1 ) &(1, 0, 2) \\
    (1, 1, 0) & (1, 2, 0) \\
  \end{pmatrix}\in {\Theta}_3(2,11).$$
Then $\bfa_{11}=(1,1,1)$, $\bfa_{21}=(1,1,0)$, $\bfa_{12}=(1,0,2)$, $\bfa_{22}=(1,2,0)$, and
$$|\da|=
  \begin{pmatrix}
    3 &3 \\
    2 & 3\\
  \end{pmatrix}.$$
By the definition above, we obtain
$$\aligned
\ddot{\bfa}_{11}&=(1, 1, 0), \;\ddot{\bfa}_{21}=(1, 1), \; \ddot{\bfa}_{12}=(1, 0, 0),
\;\ddot{\bfa}_{22}=(1, 0, 1),\;\text{ and}\\
\sigma^{\ddot\da}&=\sigma^{\ddot{\bfa}_{11}}\sigma^{\ddot{\bfa}_{21}}\sigma^{\ddot{\bfa}_{12}}\sigma^{\ddot{\bfa}_{22}}\\
&=(L_1+L_2+L_3)(L_1L_2+L_2L_3+L_1L_3)(L_4+L_5)(L_4L_5)\\
&\quad\; (L_6+L_7+L_8)(L_9+L_{10}+L_{11})L_9L_{10}L_{11}.
\endaligned$$
Moreover, $\nu(d_{|\da|})=(3, 2, 3, 3)$, and the corresponding Young subgroup $\fS_{\nu(d_{|\da|})}$
is generated by $s_1, s_2, s_4, s_6, s_7, s_9, s_{10}$, where $s_i$ ($1\leq i\leq 10$)
are the generators of $\fS_{11}$.
\end{Example}

Using the element $\sigma^{\ddot\da}$ associated with $\da\in \cnrm$ in \eqref{sigddA}, we further define with $\la=\ro(|\da|)$ and $ \mu=\co(|\da|),$
\begin{equation}\label{def-b_A}
\frak b_{\da}=x_\la T_{d_{|\da|}} \sigma^{\ddot\da}\sum_{w\in \msD_{\nu(d_{|\da|})}\cap \fS_\mu}T_w
\in x_\la \cysHr.
\end{equation}

\begin{Thm}\label{standard-basis-thm} For $\lambda, \mu\in \Lambda(n,r)$, the $\scR$-module
$x_\la\cysHr \cap \cysHr  x_\mu$ is free and the set
$$\mathcal{B}_{\la,\mu}=\mathcal{B}^{(m)}_{\la,\mu}=\big\{\frak b_{\da}\mid \da\in \cnrm_{\la, \mu} \big\}$$
forms a basis.
\end{Thm}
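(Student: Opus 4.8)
The plan is to prove that $\mathcal{B}_{\la,\mu}$ is both a spanning set and a linearly independent set for $x_\la\cysHr\cap\cysHr x_\mu$. For the spanning claim, I would first decompose an arbitrary element $h\in x_\la\cysHr\cap\cysHr x_\mu$ along the double-coset basis of Lemma \ref{pbw basis}: writing $h=\sum_{d\in\msD_{\la,\mu}} h_d$ where $h_d$ collects all basis terms $T_wL^\bfa$ with $w$ in the double coset $\fS_\la d\fS_\mu$. Using \eqref{commutator-x-T} and the characterisation of the permutation modules in Lemma \ref{permutation mod}, each piece $h_d$ individually lies in $x_\la\cysHr\cap\cysHr x_\mu$ (the double-coset decomposition of $\fS_r$ is compatible with left multiplication by $x_\la$ and right multiplication by $x_\mu$, so the conditions $T_ih=qh$ for $i\in J_\la$ and $hT_i=qh$ for $i\in J_\mu$ are preserved termwise). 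So it suffices to treat a single $h_d$.

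Next, for a fixed $d\in\msD_{\la,\mu}$ with $\nu=\nu(d)=\la d\cap\mu$, I would use Lemma \ref{Basic}(2) together with \eqref{element} to rewrite $h_d$. Since every $w\in\fS_\la d\fS_\mu$ factors uniquely as $udv$ with $u\in\msD^{-1}_{\nu(d^{-1})}\cap\fS_\la$, $v\in\msD_\nu\cap\fS_\mu$, and $\ell(w)=\ell(u)+\ell(d)+\ell(v)$, the element $h_d$ can be brought into the form $x_\la T_d z_d (\sum_{v\in\msD_\nu\cap\fS_\mu}T_v)$ for a suitable ``middle'' element; more precisely one shows $h_d=x_\la T_d z$ where $z$ lies in the intersection $x_\nu\cysHr\cap\cysHr x_\nu$ and, because the only $L$-powers appearing are $\mbz_m^r$-indexed, $z$ has the ``pure'' form required by Proposition \ref{symm-funct-composition}(2). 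Applying that proposition, $z$ is an $\scR$-linear combination of the $x_\nu\sigma^{\bfa(\nu)}$ with $\bfa(\nu)\in\La(\nu,<\!m)$. Then Lemma \ref{xim} identifies each $\sigma^{\bfa(\nu)}$ with $\sigma^{\ddot\da}$ for a unique $\da\in\cnrm_{\la,\mu}^d$, and using \eqref{element} again to absorb the factor $\sum_{v\in\msD_\nu\cap\fS_\mu}T_v$ together with \eqref{comm-x-nu-sigma}, one recognizes $x_\la T_d x_\nu\sigma^{\ddot\da}(\cdots) = x_\la T_d\sigma^{\ddot\da}\sum_{w\in\msD_\nu\cap\fS_\mu}T_w=\frak b_{\da}$. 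Summing over $d$ shows $h\in\spann_\scR\mathcal{B}_{\la,\mu}$, and also that each $\frak b_\da$ indeed lies in the intersection.

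For linear independence, I would pass to leading terms. By Lemma \ref{symm-polynomials} / Proposition \ref{symm-funct-composition}(1), the leading monomial of $\sigma^{\ddot\da}$ in $L_1,\dots,L_r$ is $L^{\widehat{\bfa(\ddot\da)}}$, so by Lemma \ref{Basic}(2) and \eqref{element} the expansion of $\frak b_\da$ in the PBW basis $\{T_wL^\bfa\}$ of Lemma \ref{pbw basis} has ``highest'' term $T_{w_\da}L^{\widehat{\bfa(\ddot\da)}}$ for an explicit $w_\da$ lying in the double coset $\fS_\la d_{|\da|}\fS_\mu$, with all other terms in the same or smaller double cosets but strictly smaller $L$-degree within each coset. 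Since distinct $\da\in\cnrm_{\la,\mu}$ with the same $d$ give distinct $\ddot\da$ hence distinct $\widehat{\bfa(\ddot\da)}$ (by the injectivity in Lemma \ref{symm-polynomials} and the bijections $g_{m,k}$), and distinct $d$ give disjoint supports, the leading terms over all $\da\in\cnrm_{\la,\mu}$ are pairwise distinct basis elements of $\cysHr$; a standard triangularity argument then forces any $\scR$-linear dependence among the $\frak b_\da$ to be trivial. Freeness of $x_\la\cysHr\cap\cysHr x_\mu$ follows since it is now exhibited with an explicit $\scR$-basis.

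The main obstacle I expect is the bookkeeping in the spanning step: carefully verifying that the ``middle factor'' $z$ obtained after peeling off $x_\la T_d$ on the left and $\sum_v T_v$ on the right genuinely lands in $x_\nu\cysHr\cap\cysHr x_\nu$ in the pure form demanded by Proposition \ref{symm-funct-composition}(2), and that reassembling via \eqref{element} does not introduce extra double cosets. This requires tracking the length-additive factorisation of Lemma \ref{Basic}(2) through the commutation formula of Lemma \ref{commutator} (which, in general, produces lower-order $L$-terms but keeps the $T_w$-part inside $\fS_\la d\fS_\mu$), so that the conjugation $d^{-1}\fS_\la d\cap\fS_\mu=\fS_\nu$ controls exactly which symmetric polynomials appear. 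Once this compatibility is pinned down, the rest is the triangularity argument, which is routine given Lemmas \ref{pbw basis}, \ref{symm-polynomials} and \ref{xim}.
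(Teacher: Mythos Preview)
Your approach is essentially the same as the paper's: both reduce the spanning claim to Proposition~\ref{symm-funct-composition}(2) via the double-coset decomposition of $\fS_r$ and Lemma~\ref{xim}, and both prove linear independence by isolating a top term and invoking Proposition~\ref{linear independent 0}. The paper's one extra technical device is an auxiliary basis $\sX=\{T_uT_dL^\bfa T_v\}$ of $\cysHr$ (their Lemma~A.1), which cleanly separates contributions by $(u,d,v)$; with it, the ``middle factor'' $z_d$ is shown to lie in $x_{\nu(d)}\cysHr\cap\cysHr x_\mu$ (not $\cysHr x_{\nu(d)}$ as you wrote), and only after comparing two expansions of $z_d$ does one extract $v$-independent pure-form pieces in $x_{\nu(d)}\cysHr\cap\cysHr x_{\nu(d)}$ --- this is exactly the bookkeeping you flagged as the main obstacle, and the $\sX$ basis is what makes it go through smoothly in both the spanning and the linear-independence parts.
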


The proof of the theorem is somewhat standard and will be given in Appendix A; compare \cite{DW}.

We remark that the assertion that $\scR$-module  $x_\la\cysHr \cap \cysHr  x_\mu$ is $\scR$-free is not new.
In \cite[(6.3)]{DJM}, a basis that can give rise to a cellular basis for the cyclotomic $q$-Schur algebra is
constructed. However, the basis $\mathcal{B}_{\la,\mu}$ is new. We will show in \S5 that it is a $q$-analogue
of the usual double coset basis for the endomorphism algebra of a permutation module \cite{S}.

For given $\da\in\cnrm$, define
$$\Phi_{\da}\in\sS_\bfu(n,r)=\End_{\cysHr }\big(\bigoplus_{\la\in\La(n,r)}x_\la \cysHr \big)$$
taking $x_\mu h\mapsto \delta_{\mu, \col(|\da|)}\frak b_{\da}h$
for all $\mu\in \Lambda(n,r) $ and $h\in \cysHr $.

Now we can state the main result of this section.

\begin{Thm}\label{thm-standard-basis}
The slim cyclotomic $q$-Schur algebra $\sS_\bfu(n,r)$ is a free $\scR$-module with basis
$\{\Phi_{\da}\mid \da\in\cnrm\}$ and rank
$\left( \begin{matrix} mn^2+r-1\\ r \end{matrix}\right) $.
\end{Thm}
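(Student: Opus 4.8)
The plan is to assemble the global basis of $\sS_\bfu(n,r)$ from the local bases $\mathcal B_{\la,\mu}$ of the intersections $x_\la\cysHr\cap\cysHr x_\mu$ and then verify the rank count combinatorially. First I would invoke Lemma \ref{permutation mod}, which gives an $\scR$-module isomorphism $\Hom_{\cysHr}(x_\mu\cysHr,x_\la\cysHr)\cong x_\la\cysHr\cap\cysHr x_\mu$; under this identification the homomorphism $\Phi_\da$ attached to $\da\in\cnrm$ corresponds to left multiplication by $\frak b_\da\in x_\la\cysHr\cap\cysHr x_\mu$, where $\la=\ro(|\da|)$ and $\mu=\co(|\da|)$. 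Since
$$\sS_\bfu(n,r)=\End_{\cysHr}\Big(\bigoplus_{\la\in\La(n,r)}x_\la\cysHr\Big)\cong\bigoplus_{\la,\mu\in\La(n,r)}\Hom_{\cysHr}(x_\mu\cysHr,x_\la\cysHr)\cong\bigoplus_{\la,\mu\in\La(n,r)}\big(x_\la\cysHr\cap\cysHr x_\mu\big),$$
Theorem \ref{standard-basis-thm} immediately yields that $\sS_\bfu(n,r)$ is $\scR$-free with basis $\{\Phi_\da\mid\da\in\cnrm_{\la,\mu},\ \la,\mu\in\La(n,r)\}$. But $\cnrm=\bigsqcup_{\la,\mu\in\La(n,r)}\cnrm_{\la,\mu}$ by definition \eqref{da}, since every $\da\in\cnrm$ has well-defined row and column vectors $\ro(|\da|),\co(|\da|)\in\La(n,r)$; hence the basis is exactly $\{\Phi_\da\mid\da\in\cnrm\}$.

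It then remains to compute $|\cnrm|$. By \eqref{da}, an element of $\cnrm$ is an $n\times n$ array of $m$-tuples of nonnegative integers whose entries sum to $r$; flattening the array, this is precisely the number of ways to write $r$ as an ordered sum of $mn^2$ nonnegative integers, i.e.\ $|\La(mn^2,r)|$. By the standard stars-and-bars count this equals $\binom{mn^2+r-1}{r}$, which is the claimed rank.

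I do not anticipate a genuine obstacle here: the content of the theorem is carried entirely by Theorem \ref{standard-basis-thm} (whose proof is deferred to Appendix A) together with the decomposition of the endomorphism algebra of a direct sum into its Hom-components. The only point requiring a word of care is the bookkeeping identification of $\Phi_\da$ with an element of $x_\la\cysHr\cap\cysHr x_\mu$ via Lemma \ref{permutation mod} — one must check that the definition of $\Phi_\da$ (sending $x_\mu h\mapsto\delta_{\mu,\co(|\da|)}\frak b_\da h$ and acting as $0$ on the other summands) is the homomorphism corresponding to $\frak b_\da$ under that isomorphism, which is immediate from the fact that $\frak b_\da\in x_\la\cysHr$ and $\frak b_\da\in\cysHr x_\mu$. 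I would present this as a short proof: reduce to Theorem \ref{standard-basis-thm} via the Hom-decomposition, observe $\cnrm=\bigsqcup_{\la,\mu}\cnrm_{\la,\mu}$, and finish with the stars-and-bars rank computation $|\cnrm|=|\La(mn^2,r)|=\binom{mn^2+r-1}{r}$.
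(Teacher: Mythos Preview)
Your proposal is correct and follows essentially the same approach as the paper: the paper's proof simply cites the Hom-decomposition $\sS_\bfu(n,r)=\bigoplus_{\la,\mu}\Hom_{\cysHr}(x_\mu\cysHr,x_\la\cysHr)$, invokes Lemma \ref{permutation mod} and Theorem \ref{standard-basis-thm} for the first assertion, and states the binomial count $|\cnrm|=\binom{mn^2+r-1}{r}$ for the second. Your version spells out the stars-and-bars identification $|\cnrm|=|\La(mn^2,r)|$ and the bookkeeping for $\Phi_\da\leftrightarrow\frak b_\da$ a bit more explicitly, but the argument is the same.
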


\begin{proof} Since $\sS_\bfu(n,r)=\bigoplus_{\la, \mu\in \Lambda(n,r)} \Hom_{ \cysHr }(x_{\mu} \cysHr , x_{\la} \cysHr)$, the first assertion follows from Lemma \ref{permutation mod} and Theorem \ref{standard-basis-thm}. The second assertion is clear as
$|\cnrm|=\left( \begin{matrix} mn^2+r-1\\ r\end{matrix}\right)$.
\end{proof}

\begin{Rem}\label{aff3} (1)
It would be interesting to generalise the construction to obtain a similar basis for the affine
$q$-Schur algebra $\afsSr$. However, by simply replacing $L_j$'s by $X_j$'s in the definition, we obtain a linearly independent set
$\{\Phi_{\da}^\vtg\mid \da\in\cnrm\}$ for $\afsSr$. See Remarks \ref{aff1} and \ref{aff2}.

(2) We also observe that the basis in Theorem \ref{thm-standard-basis} cannot be naturally extended to a basis for the cyclotomic $q$-Schur algebra discussed in \cite{DJM}.
\end{Rem}

View $\Theta(n,r)$ as a subset of $\cnrm$ via the following map:
\begin{equation}\label{iota}
\iota^{(m)}:\Theta(n,r)\lra\cnrm,\quad A\longmapsto A^{(m)},
\end{equation}
where if $A=(a_{ij})\in\Theta(n,r)$, then $A^{(m)}=(\bfa_{ij})\in\cnrm$ with all
$\bfa_{ij}=(0, \ldots, 0, a_{ij})\in\mbn^m$. Then by the definition,
$\sigma^{\ddot A^{(m)}}=1$ and, hence,
$$\Phi_{A^{(m)}}(x_\mu h )=\delta_{\mu, \col(A)}x_\la T_{d_{A}} \sum_{v\in \msD_{\nu(d_{A})}\cap \fS_\mu}T_vh,
\;\forall\,h\in\cysHr.$$
Note that the same rule with $h\in\sH(r)$ defines an element $\phi_A$ in the $q$-Schur algebra $\sS_q(n,r):=\End_{\sH(r)}(\oplus_{\la\in\La(n,r)}x_\la \sH(r))$. Thus, we have immediately the following:

\begin{Prop}\label{q-Sch} The $q$-Schur algebra $\sS_q(n,r)$ can be embedded
into $\sS_\bfu(n,r)$ via the map $\phi_A\mapsto\Phi_{A^{(m)}}$ for $A\in \Theta(n,r)$.
\end{Prop}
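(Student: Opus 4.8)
The plan is to realise the correspondence $\phi_A\mapsto\Phi_{A^{(m)}}$ as the $\scR$-algebra homomorphism on endomorphism rings induced by the base-change functor $-\ot_{\sH(r)}\cysHr$, and then to read off injectivity from Theorem~\ref{thm-standard-basis}. First recall that $\sS_q(n,r)$ is $\scR$-free on the standard basis $\{\phi_A\mid A\in\Theta(n,r)\}$ (see, e.g., \cite{DDPW}), and that, writing $\la=\ro(A)$, $\mu=\co(A)$, $d=d_A$, Lemma~\ref{Basic} gives $\phi_A(x_\mu)=\delta_{\mu,\co(A)}\sum_{w\in\fS_\la d\fS_\mu}T_w=\delta_{\mu,\co(A)}\,x_\la T_d\sum_{v\in\msD_{\nu(d)}\cap\fS_\mu}T_v$, with $h\in\sH(r)$ acting by right multiplication.

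Next I would use Lemma~\ref{pbw basis} to write $\cysHr=\bigoplus_{\bfa\in\mbz_m^r}\sH(r)L^\bfa$, so that $\cysHr$ is free as a left $\sH(r)$-module with basis $\{L^\bfa\mid\bfa\in\mbz_m^r\}$. Combining this with the bases of $x_\la\cysHr$ and of $x_\la\sH(r)$ from Lemma~\ref{permutation mod}, one sees that the multiplication map $x_\la\sH(r)\ot_{\sH(r)}\cysHr\to x_\la\cysHr$, $x_\la h\ot h'\mapsto x_\la hh'$, is an isomorphism of right $\cysHr$-modules for every $\la\in\La(n,r)$. Writing $\sT_q(n,r)=\bigoplus_{\la\in\La(n,r)}x_\la\sH(r)$ and summing over $\la$ gives a right $\cysHr$-module isomorphism $\sT_q(n,r)\ot_{\sH(r)}\cysHr\cong\sT_\bfu(n,r)$, and functoriality of $-\ot_{\sH(r)}\cysHr$ then yields an $\scR$-algebra homomorphism
\[
\Psi:\ \sS_q(n,r)=\End_{\sH(r)}\big(\sT_q(n,r)\big)\lra\End_{\cysHr}\big(\sT_\bfu(n,r)\big)=\sS_\bfu(n,r),\qquad f\longmapsto f\ot\id.
\]

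To identify $\Psi$ on basis elements, note that both $\Psi(\phi_A)$ and $\Phi_{A^{(m)}}$ are right $\cysHr$-endomorphisms of $\sT_\bfu(n,r)$, hence are determined by their values on the generators $x_\mu$, $\mu\in\La(n,r)$. Under the isomorphism above, $\Psi(\phi_A)$ sends $x_\mu=x_\mu\ot1$ to $\phi_A(x_\mu)\ot1$, that is, to $\delta_{\mu,\co(A)}\,x_{\ro(A)}T_{d_A}\sum_{v\in\msD_{\nu(d_A)}\cap\fS_\mu}T_v$; while $\Phi_{A^{(m)}}(x_\mu)=\delta_{\mu,\co(A)}\,\frak b_{A^{(m)}}$, with $\frak b_{A^{(m)}}=x_{\ro(A)}T_{d_A}\,\sigma^{\ddot A^{(m)}}\sum_{v\in\msD_{\nu(d_A)}\cap\fS_\mu}T_v$, and $\sigma^{\ddot A^{(m)}}=1$ because, by the definition of $\iota^{(m)}$, every entry of $A^{(m)}$ has the shape $(0,\dots,0,a_{ij})$, whence $\ddot\bfa_{ij}=0$ for all $i,j$ by~\eqref{dd}. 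Therefore $\Psi(\phi_A)=\Phi_{A^{(m)}}$. Finally, since $\iota^{(m)}$ is injective, $\Psi$ carries the basis $\{\phi_A\mid A\in\Theta(n,r)\}$ bijectively onto the subset $\{\Phi_{A^{(m)}}\mid A\in\Theta(n,r)\}$ of the basis $\{\Phi_\da\mid\da\in\cnrm\}$ of $\sS_\bfu(n,r)$ furnished by Theorem~\ref{thm-standard-basis}; in particular $\Psi$ is injective, which is the assertion.

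The main point requiring care will be the assembly of the multiplication maps into a right $\cysHr$-module isomorphism $\sT_q(n,r)\ot_{\sH(r)}\cysHr\cong\sT_\bfu(n,r)$ — this is exactly where Lemmas~\ref{pbw basis} and~\ref{permutation mod} must be used in tandem — together with the observation that $\Psi(\phi_A)$ and $\Phi_{A^{(m)}}$ need only be matched on the $x_\mu$; the remaining checks are routine. One could instead attempt to verify directly that $\phi_A\mapsto\Phi_{A^{(m)}}$ respects multiplication, but that would require the structure constants of the basis $\{\Phi_\da\}$, which are not available and, with the functorial argument, not needed.
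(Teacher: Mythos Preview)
Your proof is correct and follows essentially the same approach as the paper, which treats the result as immediate from the observation that $\sigma^{\ddot A^{(m)}}=1$ so that $\Phi_{A^{(m)}}$ and $\phi_A$ are given by the same left-multiplication formula, together with the fact that $\iota^{(m)}$ sends the basis $\{\phi_A\}$ into the basis $\{\Phi_\da\}$ of Theorem~\ref{thm-standard-basis}. Your base-change construction $\Psi=(-)\ot_{\sH(r)}\cysHr$ makes the algebra-homomorphism property rigorous where the paper leaves it implicit, and the freeness argument via Lemmas~\ref{pbw basis} and~\ref{permutation mod} is exactly the right way to justify the isomorphism $\sT_q(n,r)\ot_{\sH(r)}\cysHr\cong\sT_\bfu(n,r)$.
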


\section{Comparison with the double coset basis of type $B/C$}

Recall the presentation of $\fS_{m,r}$ in \eqref{presentn}. If $m=2$, then
$$W:=\fS_{2,r}$$ is a Coxeter (or Weyl) group of type $B$ with Coxeter generators $s_0,s_1,\ldots,s_{r-1}$. In this section we mainly deal with the cyclotomic Hecke algebra $\cysHr$ associated with $W$. We will also choose
$$\bfu={\bf 2}=(-1, q_0)$$ with $q_0$ invertible in $\scR$ ($q$ also invertible
in $\scR$ as before) so that $\tbsHr$
is isomorphic to the Hecke algebra of type $B$ defined in \cite[Sect.~3]{DS} by sending $T_i$ to $T_{s_i}$ and $L_j$ to $q^{1-j}T_{s_{j-1}}\cdots T_{s_1}T_{s_0}T_{s_1}\cdots T_{s_{j-1}}$. We will identify the two. Thus, $T_i=T_{s_i}$, $L_1=T_0$, and $\tbsHr$ has a basis
$\{T_w\}_{w\in W}$, where $T_w=T_{i_1}T_{i_2}\cdots T_{i_l}$ for any reduced expression $w=s_{i_1}s_{i_2}\cdots s_{i_l}$. The associated slim cyclotomic $q$-Schur algebra is known as the {\it $q$-Schur$^{1\textsc{b}}$ algebra}.\footnote{The (nonslim) cyclotomic $q$-Schur algebra in this case is called the $q$-Schur$^{2\textsc{b}}$ (or $q$-Schur$^2$) algebra; see \cite{DS}.} See Remark \ref{octah} for connections to other similar algebras.

For any subset $D$ of $ W$, define
               $$T_D=\sum_{w\in D}T_w\in\tbsHr,\quad \underline{D}=\sum_{w\in D}w\in\scR W.$$

For arbitrary $\la,\mu\in\La(n,r)$, we shall make a comparison between the integral basis
$\mathcal B_{\la,\mu}^{(2)}$ for $x_\la\tbsHr \cap \tbsHr  x_\mu$
given in Theorem \ref{standard-basis-thm} and the double coset basis $\{T_D\mid D\in\fS_\la\backslash W/\fS_\mu\}$
given in \cite[Prop.~4.2.5]{DS} (and their counterparts in the non-quantum case).

Following \cite[Sect.~2]{DS}, for each composition $\lambda$ of $r$, let $\widetilde{\msD}_\lambda$
be the set of shortest right coset representatives of the Young subgroup $\fS_\lambda$ in $ W$.
Set $\widetilde{\msD}_\lambda^{-1}=\{d^{-1}\;|\; d\in \wdl\}$.  Then for two compositions
$\la,\mu$ of $r$, $\wdlm=\wdl\cap \widetilde{\msD}_\mu^{-1}$ is the set of shortest $\fS_\lambda$-$\fS_\mu$ double coset representatives in $ W$.

We will prove the following theorem in this section.

\begin{Thm}\label{double coset basis}
For $\la, \mu\in \Lambda(n,r)$, let $\mathcal B_{\la,\mu}^{(2)}$ be the basis for the free
$\scR$-module $x_\la\tbsHr \cap \tbsHr  x_\mu$ as given in Theorem \ref{standard-basis-thm}.
\begin{itemize}
\item[(1)] For $\la=\mu=(r)=(r,0,\ldots,0)\in\La(n,r)$, write $\widetilde{\msD}_{r,r}:=\widetilde{\msD}_{\la,\la}$. Then $$\mathcal B_{r,r}^{(2)}=\{T_{\fS_r d\fS_r}\mid d\in\widetilde{\mathscr D}_{r,r}\}.$$
\item[(2)] If $\sH_{\bf 2}(r)$ is the group algebra of $W$ over $\scR$ (i.~e., if $q=q_0=1$), then
$$\mathcal B_{\la,\mu}^{(2)}=\big\{\underline{\fS_\la d\fS_\mu} \mid d\in \wdlm\big\}.$$
\end{itemize}
\end{Thm}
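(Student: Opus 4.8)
I would treat the two parts separately, in each case matching every element of the new basis with an explicit sum over a single double coset. \textbf{Part (1).} Here $\fS_{(r)}=\fS_r$, and $\widetilde{\msD}_{r,r}$ contains exactly one representative of each of the $r+1$ double cosets $\mathcal O_k$ of signed permutations with exactly $k$ negative entries in the window ($0\le k\le r$); also $\La(r,<\!2)=\{\mathbf 0,e_1,\dots,e_r\}$ ($e_k$ the $k$-th unit vector of $\mbn^r$), so by Proposition \ref{basis 1} $\mathcal B^{(2)}_{r,r}=\{x_{(r)}\}\cup\{x_{(r)}\sigma_k\mid 1\le k\le r\}$ with $\sigma_k=\sigma_k(L_1,\dots,L_r)$. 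The point is the identity
\[
x_{(r)}L_S=\sum_{w\in\fS_r\,t_S}T_w\qquad(S\subseteq\{1,\dots,r\}),
\]
where $L_S=\prod_{j\in S}L_j$, $t_S=\prod_{j\in S}t_j$ and $t_j=s_{j-1}\cdots s_1s_0s_1\cdots s_{j-1}$ as in \eqref{tijk319}. To prove it: the displayed word for $t_j$ is reduced, so $L_j=q^{1-j}T_{t_j}$ in $\tbsHr$; since the $L_i$ commute and $\ell(t_S)=\sum_{j\in S}(2j-1)=\sum_{j\in S}\ell(t_j)$ by the type-$B$ length formula, $L_S=q^{\sum_{j\in S}(1-j)}T_{t_S}$; writing $t_S=u_Sd_S$ with $d_S$ the minimal-length representative of the coset $\fS_r t_S$ and $\ell(t_S)=\ell(u_S)+\ell(d_S)$, and using $x_{(r)}T_u=q^{\ell(u)}x_{(r)}$ for $u\in\fS_r$ together with $x_{(r)}T_{d_S}=\sum_{w\in\fS_r t_S}T_w$, the identity reduces to $\ell(d_S)=\sum_{j\in S}j$. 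The latter is a direct computation: $d_S$ is the signed permutation placing the negative values in decreasing absolute value on the positions of $S$ and the remaining positive values in increasing order on the complement, and the type-$B$ length formula gives $\ell(d_S)=\sum_{j\in S}j$. Summing the identity over all $k$-subsets and using $\mathcal O_k=\bigsqcup_{|S|=k}\fS_r t_S$ yields $x_{(r)}\sigma_k=T_{\mathcal O_k}$. Since $\{T_{\fS_r d\fS_r}\mid d\in\widetilde{\msD}_{r,r}\}=\{T_{\mathcal O_k}\mid 0\le k\le r\}$ is the double-coset basis of $x_{(r)}\tbsHr\cap\tbsHr x_{(r)}$ by \cite[Prop.~4.2.5]{DS}, part (1) follows.

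\textbf{Part (2).} Now $q=q_0=1$, so $\tbsHr=\scR W$, $x_\la\mapsto\underline{\fS_\la}$ and $L_j\mapsto t_j$. Using the $m=2$ case of \eqref{bijection-lamda-gamma} one computes $\ddot{\bfa}_{ij}=e_{a_{ij}^{(1)}}$ for $(i,j)\in\sJ$ and $\ddot{\bfa}_{ij}=\mathbf 0$ otherwise, so that $\sigma^{\ddot\da}|_{q=q_0=1}=\prod_{(i,j)\in\sJ}\big(\sum_{S_{ij}}t_{S_{ij}}\big)$, the inner sum running over the $a_{ij}^{(1)}$-subsets $S_{ij}$ of the $(i,j)$-th interval block of $\nu(|\da|)$; hence from \eqref{def-b_A}
\[
\frak b_{\da}|_{q=q_0=1}=\underline{\fS_\la}\,d_{|\da|}\Big(\prod_{(i,j)\in\sJ}\sum_{S_{ij}}t_{S_{ij}}\Big)\underline{\msD_{\nu(|\da|)}\cap\fS_\mu},
\]
while by Lemma \ref{7 crucial} the representative $w_{\da}$ attached to $\da$ equals $d_{|\da|}\prod_{(i,j)\in\sJ}t_{ij}^{(1)}=d_{|\da|}\,t_T$ with $T=\bigsqcup_{(i,j)\in\sJ}T_{ij}$ and $T_{ij}$ the first $a_{ij}^{(1)}$ elements of that block. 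Now I would combine three facts: (i) $\frak b_{\da}\in x_\la\cysHr\cap\cysHr x_\mu$ by Theorem \ref{standard-basis-thm}, so $\frak b_{\da}|_{q=q_0=1}\in\underline{\fS_\la}\scR W\cap\scR W\underline{\fS_\mu}$, hence its coefficients are constant along each $\fS_\la$--$\fS_\mu$ double coset; (ii) reading off the double coset of a term $u\,d_{|\da|}\,t_U\,v$ (with $U=\bigsqcup S_{ij}$) through Lemma \ref{from da to a}(2), and using that $R_j^\mu\cap d_{|\da|}^{-1}(R_i^\la)$ is exactly the $(i,j)$-th interval block of $\nu(|\da|)$, shows that every such term lies in the single double coset $\fS_\la w_{\da}\fS_\mu$; (iii) the number of terms in the display equals $|\fS_\la|\cdot\prod_{(i,j)}\binom{|\bfa_{ij}|}{a_{ij}^{(1)}}\cdot|\msD_{\nu(|\da|)}\cap\fS_\mu|=|\fS_\la|\,|\fS_\mu|/|\fS_{\nu(\da)}|=|\fS_\la w_{\da}\fS_\mu|$. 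Facts (i)--(iii) force $\frak b_{\da}|_{q=q_0=1}=\underline{\fS_\la w_{\da}\fS_\mu}$, and since $\da\mapsto\fS_\la w_{\da}\fS_\mu$ is a bijection onto $\fS_\la\backslash W/\fS_\mu$ (Lemmas \ref{7 crucial} and \ref{from da to a}(2)), part (2) follows by reindexing with minimal-length double coset representatives.

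\textbf{Where the work is.} In (1) the only substantive point is the type-$B$ length identity $\ell(d_S)=\sum_{j\in S}j$; everything else is formal manipulation of $x_{(r)}$ and the $L_j$. In (2) the work is purely combinatorial bookkeeping: aligning the expansion of $\sigma^{\ddot\da}$ obtained from \eqref{bijection-lamda-gamma} with the canonical representative $w_{\da}$ of Lemma \ref{7 crucial}, and keeping the three nested compositions $\nu(|\da|)$, $\nu(\da)$ and the interval blocks straight when identifying the double coset in step (ii). I expect step (ii) to be the most error-prone, though not conceptually difficult.
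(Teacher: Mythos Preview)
Your proposal is correct, and in both parts you take a genuinely different route from the paper.

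For Part~(1), the paper chooses explicit minimal double coset representatives $d_i=\tau_1\cdots\tau_i$, describes $\msD_{\nu(d_i)}$ explicitly, and then proves the key identity $x_{(r)}T_{d_i}T_{v_{\bf j}}=x_{(r)}L_{\bf j}$ by an \emph{algebraic} induction using the commutation relation \eqref{LiTi} inside the Hecke algebra. You instead compress everything into the single identity $x_{(r)}L_S=\sum_{w\in\fS_r t_S}T_w$ and verify it via a type-$B$ \emph{length computation} ($\ell(t_S)=\sum_{j\in S}(2j-1)$ and $\ell(d_S)=\sum_{j\in S}j$). Both are short; the paper's route stays purely in Hecke-algebra relations, while yours leans on Coxeter combinatorics. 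A reader comfortable with the type-$B$ length formula may find yours quicker; the paper's approach avoids importing that formula.

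For Part~(2), the difference is more substantial. The paper first proves Proposition~\ref{coincide 3}, a formula for $\frak b_{\da}$ valid for \emph{arbitrary} $q,q_0$ expressing it through $T_{d_{|\da|}}\prod T_{d_{ij}}$ and $\msD_{\nu(\da)}\cap\fS_\mu$, and only then specialises to $q=q_0=1$, identifying the resulting group-algebra element with a single double coset sum via Lemma~\ref{7 crucial}. You bypass Proposition~\ref{coincide 3} entirely: working directly in $\scR W$, you expand $\sigma^{\ddot\da}$ as a sum over subsets, use that $\frak b_{\da}\in\underline{\fS_\la}\,\scR W\cap \scR W\,\underline{\fS_\mu}$ forces constancy on double cosets, verify via Lemma~\ref{from da to a}(2) that every term lands in $\fS_\la w_{\da}\fS_\mu$, and match cardinalities. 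Your argument is more economical for the $q=1$ statement alone, but it does not yield the general-$q$ formula of Proposition~\ref{coincide 3}, which the paper uses elsewhere (e.g.\ Example~5.10) and which has independent interest.

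One small technical remark on your step~(iii): the counting gives $c\cdot|\fS_\la w_{\da}\fS_\mu|=|\fS_\la w_{\da}\fS_\mu|$ in $\scR$ via the augmentation map, which does not by itself force $c=1$ over an arbitrary ring. The cleanest fix is to note that at $q=q_0=1$ everything is defined over $\mbz$, prove $c=1$ there (where the cardinality is a nonzero integer), and base change; alternatively, combine (i)+(ii) with the fact that both families are $\scR$-bases to see that $c$ is a unit, and then use that all coefficients in the expansion are in $\mbn$ to conclude $c=1$.
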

\begin{Rem}
By this result, we see the basis constructed in Theorem \ref{thm-standard-basis} for the (non-quantum)
Schur$^{1\textsc b}$ algebra coincides with the standard basis for the endomorphism algebra of a permutation module \cite{S}.
Thus, both the basis in Theorem \ref{thm-standard-basis} and the basis 
given in \cite[Thm 4.2.6]{DS} for the $q$-Schur$^{1\textsc b}$ algebra are quantum analogues of
the classical double coset basis for the Schur$^{1\textsc b}$ algebra.

Thus, the basis constructed in
Theorem \ref{thm-standard-basis} can be regarded as a generalisation of the double coset basis to the Hecke algebra of a complex reflection group.
\end{Rem}

We proceed to prove the theorem.

Let $d_0=1$ and, for $1\leq i\leq r$,  let
\begin{equation}\label{d_i}
d_i=\tau_1\cdot\tau_2\cdots \tau_i,\quad\text{ where }\;\; \tau_j=s_{j-1}s_{j-2}\ldots s_1 s_0.
\end{equation}

\begin{Lem}\label{shortest} We have
$\widetilde{\msD}_{r,r}=\{d_i\mid 0\leq i\leq r\}$ and $\ell(d_i)=i(i+1)/2$.
\end{Lem}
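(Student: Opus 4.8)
The plan is to work throughout in the signed--permutation model of $W=\fS_{2,r}$ recalled in Section~2: $W$ is the group of permutations of $C_2R=\{\xi^ai\mid a\in\{0,1\},\,1\le i\le r\}$ (with $\xi=-1$) commuting with the sign change, so $w\in W$ is encoded by its one-line form $(w(1),\ldots,w(r))$; here $s_0$ is $(-1,2,3,\ldots,r)$ and $s_k$ ($1\le k\le r-1$) is the sequence obtained from $(1,2,\ldots,r)$ by interchanging the entries $k$ and $k+1$. The first step is to write $d_i$ down explicitly. A direct induction on $j$ shows that $\tau_j=s_{j-1}\cdots s_1s_0$ sends $1\mapsto -j$, $k\mapsto k-1$ for $2\le k\le j$, and fixes every $k$ with $j<k\le r$. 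Feeding this into $d_i=d_{i-1}\tau_i$ and inducting on $i$ then gives that $d_i$ negates and reverses the first $i$ coordinates and fixes the rest, i.e.
$$d_i(k)=-(i+1-k)\ \ (1\le k\le i),\qquad d_i(k)=k\ \ (i<k\le r),$$
so $d_i$ has one-line form $(-i,-(i-1),\ldots,-1,i+1,\ldots,r)$, is an involution ($d_i^{-1}=d_i$), and $d_0=1$; in particular the $d_i$ are pairwise distinct.

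Next I would identify $\widetilde{\msD}_{r,r}$. Since $\la=(r)$ gives $\fS_\la=\fS_r=\langle s_1,\ldots,s_{r-1}\rangle$, an element $w$ lies in $\widetilde{\msD}_{r,r}=\widetilde{\msD}_{(r)}\cap\widetilde{\msD}_{(r)}^{-1}$ exactly when $\ell(s_kw)>\ell(w)$ and $\ell(ws_k)>\ell(w)$ for all $1\le k\le r-1$. Using the standard length-change criteria in one-line notation, $\ell(ws_k)>\ell(w)\iff w(k)<w(k+1)$ and $\ell(s_kw)>\ell(w)\iff w^{-1}(k)<w^{-1}(k+1)$, membership is equivalent to requiring that both sequences $(w(1),\ldots,w(r))$ and $(w^{-1}(1),\ldots,w^{-1}(r))$ be strictly increasing. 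For $w=d_i$ both sequences equal $(-i,\ldots,-1,i+1,\ldots,r)$ (as $d_i^{-1}=d_i$), which is strictly increasing, so $\{d_i\mid 0\le i\le r\}\subseteq\widetilde{\msD}_{r,r}$. Conversely, if $w$ satisfies both monotonicity conditions, then from $w(1)<\cdots<w(r)$ the negative entries of $w$ occupy exactly the first $i:=\#\{k\mid w(k)<0\}$ positions; a short computation of the sign pattern of $w^{-1}$ shows $\#\{m\mid w^{-1}(m)<0\}=i$, and the monotonicity of $w^{-1}$ forces the set of absolute values of the negative entries of $w$ to be the initial segment $\{1,\ldots,i\}$. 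Hence $w=(-i,\ldots,-1,i+1,\ldots,r)=d_i$, and $\widetilde{\msD}_{r,r}=\{d_i\mid 0\le i\le r\}$.

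For the length, I would use the type-$B_r$ length formula $\ell(w)=\text{inv}(w)+\text{neg}(w)+\text{nsp}(w)$, where $\text{inv}(w)=\#\{k<l\mid w(k)>w(l)\}$, $\text{neg}(w)=\#\{k\mid w(k)<0\}$, and $\text{nsp}(w)=\#\{k<l\mid w(k)+w(l)<0\}$. For $d_i$ the one-line form is strictly increasing, so $\text{inv}(d_i)=0$; clearly $\text{neg}(d_i)=i$; and a pair $k<l$ has $d_i(k)+d_i(l)<0$ precisely when $k,l\le i$, so $\text{nsp}(d_i)=\binom{i}{2}$. Hence $\ell(d_i)=0+i+\binom{i}{2}=i(i+1)/2$. (Alternatively, $d_i$ lies in the type-$B_i$ parabolic $\langle s_0,\ldots,s_{i-1}\rangle$, where it is $w_0^{B_i}$ times the longest element of $\langle s_1,\ldots,s_{i-1}\rangle$, whence $\ell(d_i)=\ell(w_0^{B_i})-\binom{i}{2}=i^2-\binom{i}{2}=i(i+1)/2$; this also matches the word $d_i=\tau_1\cdots\tau_i$, which has $1+2+\cdots+i=i(i+1)/2$ letters and is therefore reduced.)

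The one genuinely delicate point is the explicit computation of $\tau_j$ and $d_i$ in the first step: one must be careful which generators act on positions and which on values/signs, and careful about the order of composition in $s_{j-1}\cdots s_1s_0$ and in $\tau_1\cdots\tau_i$. Once the one-line form of $d_i$ is pinned down, the characterisation of $\widetilde{\msD}_{r,r}$ and the length count are routine type-$B$ Coxeter combinatorics.
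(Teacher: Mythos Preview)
Your argument is correct. The one-line form $d_i=(-i,-(i-1),\ldots,-1,i+1,\ldots,r)$ is right, the characterisation of $\widetilde{\msD}_{r,r}$ via simultaneous monotonicity of $w$ and $w^{-1}$ is the standard one for type $B$, and the converse step (forcing the absolute values of the negative entries to be $\{1,\ldots,i\}$ using the monotonicity of $w^{-1}$) is sound. The length computation via $\text{inv}+\text{neg}+\text{nsp}$ checks out, and the alternative observation that $\tau_1\cdots\tau_i$ has exactly $i(i+1)/2$ letters and hence is reduced is a useful sanity check.

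The paper's own proof, however, takes a completely different (and much shorter) route. It observes that the number of occurrences of $s_0$ in any reduced expression of $w\in W$ is an invariant of $w$ --- because the braid relations of type $B$ (namely $s_0s_1s_0s_1=s_1s_0s_1s_0$ and relations among $s_1,\ldots,s_{r-1}$) never change that count --- and that this number is constant on each $\fS_r$-$\fS_r$ double coset, since left or right multiplication by $s_k$ with $k\ge 1$ leaves it unchanged. Since this invariant takes the values $0,1,\ldots,r$ and $d_i$ visibly has $s_0$-count $i$ in the word $\tau_1\cdots\tau_i$, there are exactly $r+1$ double cosets with $d_0,\ldots,d_r$ as representatives; the word length $1+2+\cdots+i$ then gives $\ell(d_i)=i(i+1)/2$. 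What you gain with your approach is a self-contained, fully explicit verification that each $d_i$ really is the shortest element of its double coset, whereas the paper's argument is terse and leaves this (and the reducedness of the word) implicit. What the paper's approach buys is conceptual economy: one invariant, one sentence.
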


\begin{proof}This is clear since the double coset $\fS_r d_i\fS_r$ is the subset of $\fS_{2,r}$ consisting of elements satisfying the property that  $s_0$ occurs exactly $i$ times in every reduced expression.
\end{proof}

\begin{Lem}\label{right coset} For $0\leq i\leq r$, we have
$$\fS_{\nu(d_i)}:=d_i^{-1}\fS_r d_i\cap \fS_r=\begin{cases}\fS_r,&\text{ if }i=0;\\
\fS_{\{1,2,\ldots, i\}}\times\fS_{\{i+1,i+2,\ldots,r\}},&\text{ if }i\geq1.\end{cases}$$
\end{Lem}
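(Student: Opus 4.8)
The plan is to argue in the signed–permutation realisation of $W=\fS_{2,r}$ set up in Section~4. Writing $R=\{1,2,\ldots,r\}$ and $C_2R=\{\pm1,\pm2,\ldots,\pm r\}$ (so $\xi=-1$), $W$ is the group of permutations $w$ of $C_2R$ with $w(-k)=-w(k)$, and $\fS_r=\langle s_1,\ldots,s_{r-1}\rangle$ is precisely the subgroup $\{w\in W\mid w(R)=R\}$ of sign-preserving permutations. The case $i=0$ is trivial since $d_0=1$, so I take $i\geq1$.

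First I would determine the one-line notation of $d_i$. Using $\tau_j=s_{j-1}\tau_{j-1}$, an induction on $j$ shows that $\tau_j$ sends $1\mapsto -j$, sends $k\mapsto k-1$ for $2\leq k\leq j$, and fixes every $k>j$; substituting this into $d_i=d_{i-1}\tau_i$ and inducting on $i$ yields
\[
d_i(k)=\begin{cases}-(i+1-k),& 1\leq k\leq i,\\ k,& i<k\leq r.\end{cases}
\]
In particular $d_i$ is an involution, and as a set $d_i(R)=d_i^{-1}(R)=\{-1,\ldots,-i\}\cup\{i+1,\ldots,r\}$, with $d_i$ fixing $\{i+1,\ldots,r\}$ pointwise.

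Next I would use the elementary fact that $w\in d_i^{-1}\fS_r d_i\cap\fS_r$ if and only if $w(R)=R$ and $d_iwd_i^{-1}\in\fS_r$, the latter being equivalent to $w$ stabilising the set $d_i^{-1}(R)=\{-1,\ldots,-i\}\cup\{i+1,\ldots,r\}$. Given $w\in\fS_r$, $w$ preserves $R$ and $-R$ separately, so stabilising that set forces $w(\{i+1,\ldots,r\})=\{i+1,\ldots,r\}$ and $w(\{-1,\ldots,-i\})=\{-1,\ldots,-i\}$, i.e. $w(\{1,\ldots,i\})=\{1,\ldots,i\}$; hence $w\in\fS_{\{1,2,\ldots,i\}}\times\fS_{\{i+1,i+2,\ldots,r\}}$. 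The reverse containment is immediate, since any such $w$ lies in $\fS_r$ and fixes $d_i^{-1}(R)$ setwise. Alternatively, since $d_i$ is the shortest element of $\fS_rd_i\fS_r$ by Lemma~\ref{shortest}, one may quote Kilmoyer's lemma and identify $\fS_{\nu(d_i)}$ as the standard parabolic generated by those simple reflections $s_k$ of $W$ with $d_is_kd_i^{-1}\in\fS_r$, which the one-line formula shows to be exactly $s_1,\ldots,s_{i-1},s_{i+1},\ldots,s_{r-1}$.

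The only delicate point is pinning down the one-line notation of $d_i$ — keeping the composition order of the $s_j$ straight and tracking the single sign it introduces — and once that is in hand the lemma follows from a short set-stabiliser argument, so I anticipate no real obstacle.
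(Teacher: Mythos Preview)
Your proof is correct. The argument via the one-line notation of $d_i$ and the set-stabiliser characterisation $\fS_r=\{w\in W\mid w(R)=R\}$ is clean, and the formula $d_i(k)=-(i+1-k)$ for $1\le k\le i$ (with $d_i(k)=k$ for $k>i$) is right; in particular $d_i$ is indeed an involution, so $d_i^{-1}(R)=d_i(R)$ and the stabiliser argument goes through exactly as you describe.

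The paper takes the closely related but slightly different route of computing the conjugates of the simple reflections directly: it records the formula
\[
d_i^{-1}s_j d_i=\begin{cases} s_{i-j},& j\le i-1,\\ s_0s_1\cdots s_{i-1}s_is_{i-1}\cdots s_1s_0,& j=i,\\ s_j,& j\ge i+1,\end{cases}
\]
and reads off which $s_j$ land back in $\fS_r$. This is essentially the ``Kilmoyer'' alternative you mention at the end. The practical difference is that the paper's explicit conjugation formula is reused later in the proof of Theorem~\ref{double coset basis}(1) (to obtain $t_1t_2\cdots t_i=w_{0,i}d_i$), so its approach does double duty. Your set-stabiliser argument is a bit more conceptual and avoids the case-by-case conjugation, but does not produce that auxiliary formula as a by-product.
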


\begin{proof} We first observe that $\fS_{\nu(d_1)}$ is the subgroup of $\fS_r$ generated by
$\{s_2, s_3, \ldots , s_{r-1}\}$. If $2\leq i\leq r$, then, for $1\leq j\leq r-1$,
\begin{equation}\label{right coset equation}
d_i^{-1}s_j d_i=\begin{cases} s_{i-j}, \; \;& \text{if $j\leq i-1$};\\
s_0s_1\ldots s_{i-1}s_is_{i-1}\ldots s_1s_0, \;\;&\mbox{if $j=i$};\\
s_j,\;\;&\text{if $j\geq i+1$}.
\end{cases}
\end{equation}
Therefore, $\fS_{\nu(d_i)}$ is generated by $\{s_1, s_2, \ldots , s_{i-1}, s_{i+1}, \ldots s_{r-1}\}$,
as desired.
\end{proof}

We now compute $\msD_{\nu(d_i)}=\widetilde{\msD}_{\nu(d_i)}\cap \fS_r$. For $2\leq i\leq r$, let
$$\mathcal{E}_i=\{{\bf j}=(j_1, \ldots, j_i)\in\mbn^i\mid 1\leq j_1<\ldots<j_i\leq r\}.$$
Then, for each ${\bf j}\in \mathcal{E}_i$, we have $j_k\geq k$ for all $1\leq k\leq i$.

\begin{Lem}\label{right coset1} We have $\msD_{\nu(d_0)}=\{1\}$, $\msD_{\nu(d_1)}=\{1, s_1, s_1s_2, \ldots, s_1s_2\ldots s_{r-1}\}$,
and, for $2\leq i\leq r$,
$$\msD_{\nu(d_i)}=\{v_{\bf j}=(s_i\ldots s_{j_i-1})(s_{i-1}\ldots s_{j_{i-1}-1})\ldots
(s_1\ldots s_{j_1-1})\mid {\bf j}=(j_1,\ldots,j_i)\in\mathcal{E}_i\}.$$
{\rm(}If $j_k=k$, set $s_k\ldots s_{j_k-1}=1$ by convention.{\rm)}
\end{Lem}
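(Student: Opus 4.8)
The plan is to compute the set $\msD_{\nu(d_i)}$ of shortest left coset representatives of the parabolic subgroup $\fS_{\nu(d_i)}$ inside the symmetric group $\fS_r$ (note that we intersect $\widetilde{\msD}_{\nu(d_i)}$ with $\fS_r$, so we are working entirely inside $\fS_r$ and may use the classical combinatorics of minimal double coset / coset representatives). By Lemma \ref{right coset}, for $i\ge 2$ we have $\fS_{\nu(d_i)}=\fS_{\{1,\dots,i\}}\times\fS_{\{i+1,\dots,r\}}$, which is itself the Young subgroup $\fS_{(i,r-i)}$ of $\fS_r$ (in the notation of \eqref{partial sum}). So the problem reduces to describing $\msD_{(i,r-i)}$, the minimal length right coset representatives of $\fS_{(i,r-i)}$ in $\fS_r$, and then rewriting them in the stated product form.

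First I would recall the standard fact that $\msD_{(i,r-i)}$ is in bijection with the set of $i$-element subsets $\{j_1<\cdots<j_i\}$ of $\{1,\dots,r\}$: the coset $\fS_{(i,r-i)}w$ is determined by the (sorted) image $\{w^{-1}(1),\dots,w^{-1}(i)\}$, or dually, the minimal representative $d$ is the unique one with $d(1)<\cdots<d(i)$ and $d(i+1)<\cdots<d(r)$. Thus $\msD_{(i,r-i)}$ is indexed by $\mathcal E_i$. Next I would verify that the element $v_{\bf j}=(s_i\cdots s_{j_i-1})(s_{i-1}\cdots s_{j_{i-1}-1})\cdots(s_1\cdots s_{j_1-1})$ is precisely the minimal representative attached to ${\bf j}=(j_1,\dots,j_i)$. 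The cleanest way is to check that each factor $s_k\cdots s_{j_k-1}$ is the cycle moving position $k$ to position $j_k$ (sliding the intermediate entries down by one), that the product has length $\sum_{k=1}^i (j_k-k)$, and that this equals the number of inversions of the permutation sending $\{1,\dots,i\}$ to $\{j_1,\dots,j_i\}$ in increasing order; since $j_k\ge k$ all factors are genuine (possibly empty, by the stated convention) and the lengths add, so $v_{\bf j}$ is reduced and lies in $\msD_{(i,r-i)}$. The cases $i=0$ and $i=1$ are immediate: $\fS_{\nu(d_0)}=\fS_r$ forces $\msD_{\nu(d_0)}=\{1\}$, and $\fS_{\nu(d_1)}=\fS_{\{2,\dots,r\}}$ (generated by $s_2,\dots,s_{r-1}$), whose minimal right coset representatives are $1,s_1,s_1s_2,\dots,s_1s_2\cdots s_{r-1}$, corresponding to the possible values $j_1\in\{1,\dots,r\}$ of where $1$ is sent.

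The main obstacle I anticipate is the bookkeeping needed to show that the given ordered product of ``staircase'' factors $\prod_{k=i}^{1}(s_k\cdots s_{j_k-1})$ is reduced and realizes the correct permutation — in particular, that processing the factors from $k=i$ down to $k=1$ does not cause the later (smaller-$k$) slides to interfere with the positions already placed. One neat way around this is to argue by induction on $i$: the outermost factor $s_i\cdots s_{j_i-1}$ moves the current position-$i$ entry to position $j_i$ without disturbing positions $1,\dots,i-1$, reducing to the case of the subgroup $\fS_{(i-1,\,r-i+1)}$ on a relabeled set; alternatively one can appeal directly to the known reduced-word description of minimal coset representatives for a two-block Young subgroup (e.g.\ \cite[Ch.~2]{DDPW} or the general theory of distinguished coset representatives). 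Either route makes the length-additivity transparent and finishes the proof.
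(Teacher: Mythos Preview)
Your proposal is correct and follows essentially the same approach as the paper: both identify $\msD_{\nu(d_i)}=\msD_{(i,r-i)}$ via Lemma \ref{right coset}, parametrize minimal representatives by $i$-subsets $\{j_1<\cdots<j_i\}$, and verify that $v_{\bf j}$ is reduced by computing its length as the number of inversions $\sum_k(j_k-k)$ of the associated permutation, then conclude by the cardinality match $|\mathcal{E}_i|=\binom{r}{i}=|\msD_{(i,r-i)}|$. The paper carries this out by acting with $v_{\bf j}^{-1}$ on the sequence $1,\ldots,r$ rather than by your suggested induction on $i$, but the content is the same.
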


\begin{proof} Let $v_{\bf j}^{-1}=(s_{j_1-1}\cdots s_1)(s_{j_2-1}\cdots s_2)\cdots (s_{j_i-1}\cdots s_i)$
act on the ordered sequence $1,\ldots, r$. Then we get an ordered sequence such that the first $i$ numbers are
$j_1, \ldots, j_i$ and the remaining ones are in increasing order if and only if the number of inversions
of the sequence is $\sum_{1\leq k\leq i}(j_k-1)$, which equals the
length of $v_{\bf j}^{-1}$. Hence, $v_{\bf j}^{-1}$, as well as $v_{\bf j}$,
is reduced. The lemma then follows from the equalities $|\mathcal{E}_i|=|\msD_{\nu(d_i)}|=\frac{r!}{i!(r-i)!}$
 and the fact that $v_{\bf j}\neq v_{\bf j'}$ whenever ${\bf j}\neq {\bf j'}$.
\end{proof}

Thus, the double coset basis of $x_{(r)}\tbsHr \cap \tbsHr  x_{(r)}$ consists of the following
 elements
$$x_{(r)},\quad x_{(r)}T_0(T_1+T_1T_2+\cdots +T_1T_2\cdots T_{r-1}),\quad x_{(r)}T_{d_i}
\sum_{{\bf j}\in\mathcal{E}_i}T_{v_{\bf j}},\quad 2\leq i\leq r.$$

By taking $\mathbf m={\bf 2}=(-1,q_0)$, the basis for $x_{(r)}\tbsHr \cap \tbsHr  x_{(r)}$ in Proposition \ref{basis 1} becomes
$$\mathcal B^{(2)}_{r,r}=\{x_{(r)}\sigma_i\;|\; 0\leq i\leq r\}\quad(\text{where }\sigma_0=1).$$

\begin{proof}[{\bf Proof of Theorem \ref{double coset basis}(1)}]
It is obvious for $i=0$ as $x_{(r)}=T_{\fS_r}$.
 Since $x_{(r)}T_i=qx_{(r)}$ for $1\leq i\leq r-1$ and $L_i=q^{-(i-1)}T_{i-1}T_{i-2}\cdots T_1T_0T_1\cdots T_{i-1}$, it follows from Lemmas \ref{right coset1} and \ref{Basic}(2) that
$$\aligned
x_{(r)}\sigma_1&=x_{(r)}(L_1+L_2+\ldots +L_r)\\
&=x_{(r)}T_0(1+T_1+T_1T_2+\ldots +T_1T_2\ldots T_{r-1})=x_{(r)}T_0T_{\widetilde\msD_{\nu(d_1)}\cap\fS_r}
=T_{\fS_rd_1\fS_r}.
\endaligned$$

We now assume $2\leq i\leq r$. We first claim that for ${\bf j}=(j_1,\ldots,j_i)\in\mathcal{E}_i$,
$$x_{(r)}T_{d_i}T_{v_{\bf j}}=x_{(r)}L_{\bf j}\;\;\text{ where }\;\;L_{\bf j}=L_{j_1}L_{j_2}\ldots L_{j_i}.$$
Indeed, by repeatedly applying \eqref{right coset equation} or a well-known fact, we have
$$
t_1t_2\cdots t_i=s_1(s_2s_1)\cdots (s_{i-1}\cdots s_1)d_i=w_{0,i}d_i,$$
where $w_{0,i}$ is the longest element of $\fS_{\{1,2,\ldots,i\}}$, and $t_1, \ldots, t_i$ are defined in \eqref{tijk319}.
Thus,  $x_{(r)}T_{d_i}=x_{(r)}L_1L_2\cdots L_i$.
By \eqref{LiTi}, for $2\leq i\leq r-1$,
$$\aligned
x_{(r)}L_1\ldots L_{i-1}&L_iT_{i}=x_{(r)}L_1L_2\ldots L_{i-1} (L_iT_{i})\\
&=x_{(r)}L_1L_2\ldots L_{i-1} \big(T_{i}L_{i+1}+(1-q)L_{i+1}\big)\\
&=x_{(r)}\big(T_iL_1L_2\ldots L_{i-1}L_{i+1}+(1-q)L_1L_2\ldots L_{i-1}L_{i+1}\big)\\
&=x_{(r)}\big(qL_1L_2\ldots L_{i-1}L_{i+1}+(1-q)L_1L_2\ldots L_{i-1}L_{i+1}\big)\\
&=x_{(r)}L_1L_2\ldots L_{i-1}L_{i+1}.
\endaligned$$
Hence, inductively, we obtain
$$x_{(r)}L_1L_2\ldots L_i(T_{i}T_{i+1}\ldots T_{j_i-1})=x_{(r)}L_{1}\ldots L_{i-1}L_{j_{i}},
\;\forall\, 2\leq i<j_i\leq r-1.$$
Since $T_{v_{\bf j}}=(T_{i}T_{i+1}\ldots T_{j_i-1})(T_{i-1}T_i\ldots T_{j_{i-1}-1})\ldots(T_1T_2\ldots T_{j_1-1})$,
repeatedly applying the formula gives
 \begin{equation}\label{right action}
 \aligned
x_{(r)}T_{d_i}T_{v_{\bf j}}
&=x_{(r)}L_{j_1}L_{j_2}\ldots L_{j_i}
\endaligned
\end{equation}
proving the claim.
Now, by Lemmas \ref{Basic}(2) and  \ref{right coset1},  and the claim,
\begin{equation}\label{coincide 1}T_{\fS_rd_i\fS_r}=x_{(r)}T_{d_i}T_{\widetilde{\msD}_{\nu(d_i)}\cap\fS_r}=\sum_{{\bf j}\in\mathcal{E}_i}x_{(r)}T_{d_i}T_{v_{\bf j}}
=x_{(r)}\sum_{{\bf j}\in\mathcal{E}_i}L_{\bf j}=x_{(r)}\sigma_i,
\end{equation}as desired.
\end{proof}

 For $a, b\in\mbn$ with $b\geq1,a+b\leq r$, define  the subgroups $W^a_b,\fS^a_b$ of $W=\fS_{2,r}$:
\begin{equation}\label{Wab}
\aligned
W^a_b&=\langle s_{a+1}, s_{a+2}, \ldots, s_{a+b-1},\; t_{a+1}\rangle,\\
\fS^a_b&=\langle s_{a+1}, s_{a+2}, \ldots, s_{a+b-1}\rangle,\endaligned
\end{equation}
where $t_{a+1}=s_as_{a-1}\cdots s_1s_0s_1\cdots s_{a-1}s_a$. Note that $W^a_b$ is not a parabolic subgroup of $W$.
However, since $t_{a+1}^2=1$ and $t_{a+1}s_{a+1}t_{a+1}s_{a+1}=s_{a+1}t_{a+1}s_{a+1}t_{a+1}$, there is a group embedding
\begin{equation}\label{parallel 1}
\iota:\fS_{2,b}\lra W,\;\; s_0\longmapsto t_{a+1}, s_i\longmapsto s_{a+i}\;(1\leq i\leq b-1),
\end{equation}
which induces a group isomorphism $\fS_{2,b}\cong W^a_b$.

Note that $\fS^a_b$ is a parabolic subgroup of $W$ and $\fS^a_b=\fS_{\mu_{a,b}}$, the Young subgroup associated
with $\mu_{a,b}={(1^a,b,1^{r-a-b})}$. Set
$$\widetilde\msD^a_{b,b}=\widetilde\msD_{\mu_{a,b},\mu_{a,b}}\cap W^a_b,$$
the set of shortest double coset representatives of $\fS^a_b$ in $W^a_b$.
By \eqref{parallel 1} and Lemma \ref{shortest}, we have the following result.

\begin{Lem}\label{shortest parallel} For $a,b\in\mbn$ with $b\geq1, a+b\leq r$, we have
$$\widetilde\msD^a_{b,b}=\{d_{a,b}^{(0)}=1,\;\; d_{a,b}^{(i)}=\tau_1'\tau_2'\tau_3'\ldots \tau_i'\mid 1\leq i\leq b\},$$
where $\tau_t'=s_{a+t-1}s_{a+t-2}\ldots s_{a+1}t_{a+1}$ for $2\leq t\leq b$ and $\tau_1'=t_{a+1}$.
\end{Lem}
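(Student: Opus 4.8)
The plan is to transport the statement through the isomorphism $\iota\colon\fS_{2,b}\to W^a_b$ of \eqref{parallel 1} and then to feed in Lemma \ref{shortest} with $\fS_{2,b}$ in place of $\fS_{2,r}$. Under $\iota$ the parabolic subgroup $\langle s_1,\ldots,s_{b-1}\rangle$ of $\fS_{2,b}$ is carried onto $\fS^a_b=\fS_{\mu_{a,b}}$, and $\iota(s_{t-1}s_{t-2}\cdots s_1s_0)=s_{a+t-1}s_{a+t-2}\cdots s_{a+1}t_{a+1}=\tau'_t$ for $1\le t\le b$ (with $\tau'_1=\iota(s_0)=t_{a+1}$); hence $\iota(d_i)=\tau'_1\cdots\tau'_i=d^{(i)}_{a,b}$ for $0\le i\le b$, where $d_0=1,d_1,\ldots,d_b$ are the elements furnished by Lemma \ref{shortest}. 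Since $\fS^a_b\subseteq W^a_b$, an $\fS^a_b$-$\fS^a_b$ double coset of $W$ meets $W^a_b$ precisely when it is contained in $W^a_b$, and by Lemma \ref{shortest} there are exactly $b+1$ of these, the pairwise distinct cosets $\fS^a_b d^{(i)}_{a,b}\fS^a_b$, $0\le i\le b$. Thus $\widetilde\msD^a_{b,b}=\widetilde\msD_{\mu_{a,b},\mu_{a,b}}\cap W^a_b$ consists of exactly one $\ell$-shortest representative of each $\fS^a_b d^{(i)}_{a,b}\fS^a_b$, and the task reduces to identifying that representative with $d^{(i)}_{a,b}$.

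The point needing care is that the restriction of the length function $\ell$ of $W=\fS_{2,r}$ to $W^a_b$ is \emph{not} the Coxeter length $\ell_\flat$ of $\fS_{2,b}$, because $t_{a+1}$ is a non-simple reflection with $\ell(t_{a+1})=2a+1$. So I would first establish the comparison formula
$$\ell(w)=\ell_\flat(\iota^{-1}(w))+2a\cdot n_-(\iota^{-1}(w))\qquad(w\in W^a_b),$$
where $n_-(v)$ denotes the number of sign changes of $v\in\fS_{2,b}$ (equivalently, the number of indices sent to a negative value in the signed-permutation model). Realising $W$ as signed permutations of $\{1,\ldots,r\}$, an element $w\in W^a_b$ is supported on the block $\{a+1,\ldots,a+b\}$; the positive roots of $W$ internal to that block form a type $B_b$ subsystem with Weyl group $W^a_b$ and contribute $\ell_\flat(\iota^{-1}w)$, while a short case check (equivalently, the standard combinatorial length formula for type $B$) shows that the only further positive roots sent by $w$ to negative ones are the $e_q-e_p$ and $e_q+e_p$ with $1\le p\le a$, $a+1\le q\le a+b$ and $q$ a coordinate negated by $w$, yielding $2a$ for each of the $n_-(\iota^{-1}w)$ negated coordinates.

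Granting the formula, the proof closes at once: left or right multiplication by an element of $\fS^a_b$ only permutes the block coordinates and never creates a sign change, so $n_-(\iota^{-1}(\cdot))$ is constant on each double coset $\fS^a_b d^{(i)}_{a,b}\fS^a_b$, on which $\ell$ therefore differs from $\ell_\flat\circ\iota^{-1}$ by a constant. Hence the $\ell$-shortest element of $\fS^a_b d^{(i)}_{a,b}\fS^a_b$ is the $\iota$-image of the $\ell_\flat$-shortest element of $\langle s_1,\ldots,s_{b-1}\rangle d_i\langle s_1,\ldots,s_{b-1}\rangle$, which by Lemma \ref{shortest} is $d_i$; i.e.\ it equals $\iota(d_i)=d^{(i)}_{a,b}$. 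This gives $\widetilde\msD^a_{b,b}=\{d^{(i)}_{a,b}\mid 0\le i\le b\}$, as claimed. The main obstacle is the comparison formula; everything else is a formal transport along $\iota$ together with Lemma \ref{shortest}. One may even sidestep naming the constant, by merely checking that $\ell-\ell_\flat\circ\iota^{-1}$ is constant on $\fS^a_b$-double cosets, but the verification is essentially the same.
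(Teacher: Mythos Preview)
Your argument is correct and complete. Both you and the paper begin the same way, transporting Lemma~\ref{shortest} along the embedding $\iota$ to see that the $d^{(i)}_{a,b}$ exhaust the $(\fS^a_b,\fS^a_b)$-double cosets inside $W^a_b$; the divergence is in the second step, identifying the $\ell$-shortest representative. The paper simply asserts that left-multiplying $d^{(i)}_{a,b}$ by each simple reflection $s_{a+j}$ of $\fS^a_b$ increases $\ell$ (and implicitly likewise on the right), i.e.\ it invokes the standard Coxeter-theoretic criterion for minimality directly. You instead prove the structural comparison $\ell(w)=\ell_\flat(\iota^{-1}w)+2a\cdot n_-(\iota^{-1}w)$ on $W^a_b$ and then observe that $n_-$ is constant on $\fS^a_b$-double cosets, so $\ell$ and $\ell_\flat\circ\iota^{-1}$ share minimisers. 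Your route is a bit more work to set up but is conceptually cleaner: it explains \emph{why} the transported shortest representatives remain shortest in the ambient group, and the formula itself (or even just the constancy of $\ell-\ell_\flat\circ\iota^{-1}$ on double cosets) is reusable. The paper's route is quicker once one is willing to carry out the reduced-expression check for each generator on each side, though as written it is somewhat terse about this.
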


\begin{proof}
By Lemma \ref{shortest}, the elements $1, \; \tau_1'\tau_2'\cdots\tau_i'$ $(1\leq i\leq b)$ form a complete
set of $(\fS^a_b,\fS^a_b)$-double coset representatives in $W^a_b$. Since, for every $s_{a+i}$ $(1\leq i<b)$,
the product $s_{a+i}\tau_1'\tau_2'\cdots\tau_i'$ has length increased, we conclude that
$$\widetilde\msD^a_{b,b}=\{1, \; \tau_1'\tau_2'\cdots\tau_i'\mid 1\leq i\leq b\}.$$
\end{proof}

If there is no confusion, we often drop the subscripts $a,b$ in $d_{a,b}^{(i)}$.
For $0\leq i\leq b$, let $\fS^a_{b,\nu(d^{(i)})}={d^{(i)}}^{-1}\fS^a_b d^{(i)}\cap \fS^a_b$
and let $\msD^a_{b,\nu(d^{(i)})}$ be the set of shortest right coset representatives of
$\fS^a_{b,\nu(d^{(i)})}$ in $\fS^a_b$.

Set $x^a_b=\sum_{w\in \fS_b^a}T_w$. For $1\leq i\leq b$, let $\sigma^a_{b,i}$ be
the $i$-th elementary symmetric polynomial in $L_{a+1}, \ldots, L_{a+b}$ and set $\sigma^a_{b,0}=1$. By
a similar argument at the end of the proof of Lemma \ref{symm-polynomials}, we have
\begin{equation}\label{comm}
\sigma^a_{b,i}T_j=T_j\sigma^a_{b,i},\quad \forall\, 0\leq i\leq b,\; a+1\leq j<a+b-1.
\end{equation}
The following lemma is an analogue to \eqref{coincide 1}.

\begin{Lem}\label{coincide 2} Let $a,b\in\mbn$ with $b\geq1,a+b\leq r$. Then for each $0\leq i\leq b$,
$$x^a_b\sigma^a_{b,i}=q^{-ai}T_{\fS^a_bd^{(i)}\fS^a_b}=q^{-ai}x^a_bT_{d^{(i)}}\bigg(\sum_{w\in \msD^a_{b,\nu(d^{(i)})}}T_w\bigg).$$
\end{Lem}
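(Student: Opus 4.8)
The plan is to transcribe the proof of \eqref{coincide 1} (which is the case $a=0$, $b=r$) to the reflection subgroup $W^a_b\cong\fS_{2,b}$ of \eqref{parallel 1}, carrying along an extra power $q^{ai}$. The case $i=0$ is trivial: then $\sigma^a_{b,0}=1$, $d^{(0)}=1$ and $\fS^a_{b,\nu(d^{(0)})}=\fS^a_b$, so all three terms equal $x^a_b$. So assume $1\le i\le b$, and first establish
$$x^a_b\,T_{d^{(i)}}=q^{ai}\,x^a_b\,L_{a+1}L_{a+2}\cdots L_{a+i}.$$
Exactly as in \eqref{coincide 1} one has, in $W$, the factorisation $t_{a+1}t_{a+2}\cdots t_{a+i}=w^a_{0,i}\,d^{(i)}$, where $w^a_{0,i}=s_{a+1}(s_{a+2}s_{a+1})\cdots(s_{a+i-1}\cdots s_{a+1})$ is the longest element of $\fS_{\{a+1,\dots,a+i\}}\subseteq\fS^a_b$ (obtained by transporting $t_1\cdots t_i=w_{0,i}d_i$ through $\iota$); one checks this is a reduced factorisation in $W$, so that $T_{t_{a+1}}\cdots T_{t_{a+i}}=T_{w^a_{0,i}}T_{d^{(i)}}$. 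Since $t_{a+j}=s_{a+j-1}\cdots s_1s_0s_1\cdots s_{a+j-1}$ is reduced we have $L_{a+j}=q^{1-(a+j)}T_{t_{a+j}}$, whence $T_{t_{a+1}}\cdots T_{t_{a+i}}=q^{ai+i(i-1)/2}L_{a+1}\cdots L_{a+i}$; combining this with $x^a_bT_{w^a_{0,i}}=q^{i(i-1)/2}x^a_b$ leaves exactly the power $q^{ai}$ (for $a=0$ this recovers the equality $x_{(r)}T_{d_i}=x_{(r)}L_1\cdots L_i$ used in the proof of \eqref{coincide 1}).

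Next I would reproduce the ``claim'' inside the proof of \eqref{coincide 1}. Transporting Lemmas \ref{right coset} and \ref{right coset1} through $\iota$ gives $\fS^a_{b,\nu(d^{(i)})}=\fS_{\{a+1,\dots,a+i\}}\times\fS_{\{a+i+1,\dots,a+b\}}$ and identifies $\msD^a_{b,\nu(d^{(i)})}$ with the set of elements $v_{\mathbf j}=(s_{a+i}\cdots s_{j_i-1})(s_{a+i-1}\cdots s_{j_{i-1}-1})\cdots(s_{a+1}\cdots s_{j_1-1})$ for $a+1\le j_1<\cdots<j_i\le a+b$. Then, just as in the proof of \eqref{coincide 1} with all indices shifted by $a$ and using \eqref{LiTi} together with $x^a_bT_k=qx^a_b$ for $a+1\le k\le a+b-1$, one gets $x^a_b\,L_{a+1}\cdots L_{a+i}\,T_{v_{\mathbf j}}=x^a_b\,L_{j_1}\cdots L_{j_i}$, and hence $x^a_b\,T_{d^{(i)}}\,T_{v_{\mathbf j}}=q^{ai}\,x^a_b\,L_{j_1}\cdots L_{j_i}$.

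Finally I would assemble the pieces. Since $\fS^a_b=\fS_{\mu_{a,b}}$ is a standard parabolic subgroup of the Coxeter group $W$ and $d^{(i)}$ is, by Lemma \ref{shortest parallel}, the shortest element of its $(\fS^a_b,\fS^a_b)$-double coset, the decomposition \eqref{element} (valid in $W$) gives $T_{\fS^a_bd^{(i)}\fS^a_b}=x^a_bT_{d^{(i)}}\big(\sum_{w\in\msD^a_{b,\nu(d^{(i)})}}T_w\big)$; summing the previous display over the $v_{\mathbf j}$ and using $\sum_{a+1\le j_1<\cdots<j_i\le a+b}L_{j_1}\cdots L_{j_i}=\sigma^a_{b,i}$ then yields $T_{\fS^a_bd^{(i)}\fS^a_b}=q^{ai}x^a_b\sigma^a_{b,i}$, which is the asserted chain of identities after dividing by $q^{ai}$. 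The main obstacle is the length bookkeeping in the \emph{ambient} group $W$, as opposed to in $W^a_b$: one must be certain that $t_{a+1}\cdots t_{a+i}=w^a_{0,i}d^{(i)}$, and the given words for $d^{(i)}$ and for the $v_{\mathbf j}$, are reduced in $W$, so that the passage to the $T$-basis is legitimate and the surviving power really is $q^{ai}$. The commutativity of the sign-change reflections $t_{a+1},\dots,t_{a+i}$ and the length data underlying Lemma \ref{shortest parallel} are what make this routine rather than delicate.
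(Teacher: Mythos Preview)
Your proof is correct and follows essentially the same approach as the paper: both transport the proof of \eqref{coincide 1} (the case $a=0$, $b=r$) through the group embedding $\iota:\fS_{2,b}\hookrightarrow W$ of \eqref{parallel 1}, tracking the extra factor $q^{ai}$ coming from the relation between $L_{a+j}$ and $T_{t_{a+j}}$. The paper condenses your explicit step-by-step transport into a single stroke by introducing the $\scR$-module embedding $\widetilde\iota:\sH(\fS_{2,b})\to\sH_{\mathbf 2}(r)$, $T'_w\mapsto T_{\iota(w)}$, noting that it respects reduced factorisations and satisfies $\widetilde\iota(L'_i)=q^{a}L_{a+i}$, and then applying $\widetilde\iota$ directly to the identity \eqref{coincide 1} in $\sH(\fS_{2,b})$.
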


\begin{proof}Let $\sH(\fS_{2,b})$ be the Hecke algebra associated with the Coxeter group with basis $T'_w,w\in\fS_{2,b}$.
The map $\iota$ in \eqref{parallel 1} induces an $\scR$-module embedding
$$\widetilde\iota:\sH(\fS_{2,b})\lra \sH_{\mathbf 2}(r), T_w'\longmapsto T_{\iota(w)}.$$
This map satisfies the property: if $w=s_{i_1}\cdots s_{i_l}$ is a reduced expression, then
$\widetilde{\iota}(T_w')=T_{\iota(w)}=T_{\iota(s_{i_1})}\cdots T_{\iota(s_{i_l})}=\widetilde{\iota}(T_{s_{i_1}}')\cdots \widetilde{\iota}(T_{s_{i_l}}')$,
as $\iota(s_{i_1})\cdots\iota(s_{i_l})$ is reduced. Since $\widetilde{\iota}(L'_i)=q^{a}L_{a+i}$,
applying $\widetilde{\iota}$ to the formula in \eqref{coincide 1} for $\sH(\fS_{2,b})$ yields the required one.
\end{proof}

Consider a basis element $\frak b_{\da}\in \mathcal{B}_{\la\mu}^{(2)}$ with $\da=(\bfa_{ij})\in \Theta_2(n,r)_{\la, \mu}$, where $\bfa_{ij}=(a_{ij}^{(1)},a_{ij}^{(2)})$. Then
$\sJ=\{(i,j)\in\sI\mid a_{ij}^{(1)}\neq0\}$; see \eqref{sIsJ}.
Note that   $ \fS_{\nu(\da)}\subseteq \fS_{\nu(|\da|)}\subseteq \fS_{\co(|\da|)}\subseteq \fS_r$. Thus,
\begin{equation}\label{DDD}
\msD_{\nu(\da)}\cap \fS_\mu=(\msD_{\nu(\da)}\cap \fS_{\nu(|\da|)})(\msD_{\nu(|\da|)}\cap \fS_\mu).\end{equation}
Every number $\widetilde a_{ij}$ defined in \eqref{aijk} defines a subgroup $W^{\widetilde a_{ij}}_{|\bfa_{ij}|}$ of $W$ (by convention, $W^{\widetilde a_{ij}}_{|\bfa_{ij}|}=1$ if $|\bfa_{ij}|=0$), as well as the
 subgroup $\fS^{\widetilde a_{ij}}_{|\bfa_{ij}|}$ of $W^{\widetilde a_{ij}}_{|\bfa_{ij}|}$.
By applying Lemma \ref{shortest parallel} to the case $a=\widetilde a_{ij}$ and $b=|\bfa_{ij}|$,
we obtain an element
$$d_{ij}:=d_{\widetilde a_{ij}, |\bfa_{ij}|}^{(a_{ij}^{(1)})}\in
\widetilde\msD^{\widetilde a_{ij}}_{|\bfa_{ij}|, |\bfa_{ij}|}.$$

\begin{Prop}\label{coincide 3} For arbitrary
$\la, \mu\in \Lambda(n,r)$, $\da\in\cnrm_{\la\mu}$ and $\sJ=\sJ(\da)$, we have
$$
\frak b_{\da}=\bigg(\prod_{(i,j)\in\sJ} q^{-\widetilde a_{ij}a_{ij}^{(1)}}\bigg)x_\la T_{d_{|\da|}}\prod_{(i,j)\in\sJ}^{(\preceq)} T_{d_{ij}}\sum_{w\in \msD_{\nu(\da)}\cap \fS_\mu}T_w,$$
where the product of $T_{d_{ij}}$ is taken over the order $\preceq$ given in \eqref{prec}.
\end{Prop}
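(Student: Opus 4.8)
The plan is to split the matrix $\da$ into its $n^{2}$ blocks, reduce the identity to the one–block computation of Lemma~\ref{coincide 2}, and then reassemble; empty blocks ($|\bfa_{ij}|=0$) contribute trivial factors and may be ignored. First I would unwind the definition \eqref{sigddA} of $\sigma^{\ddot\da}$ in the case $m=2$. Each $\bfa_{ij}=(a_{ij}^{(1)},a_{ij}^{(2)})$ is a two–part composition of $|\bfa_{ij}|$, and the bijection $g_{2,|\bfa_{ij}|}$ of \eqref{bijection-lamda-gamma} carries it to the unit vector with a single $1$ in position $a_{ij}^{(1)}$ when $a_{ij}^{(1)}\ge1$ (that is, $(i,j)\in\sJ$) and to the zero vector when $a_{ij}^{(1)}=0$. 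Hence, by Definition~\ref{X-nu}, $\sigma^{\ddot\da}=\prod_{(i,j)\in\sJ}\sigma^{\widetilde a_{ij}}_{|\bfa_{ij}|,a_{ij}^{(1)}}$, a product of elementary symmetric polynomials in the pairwise disjoint blocks of variables $L_{\widetilde a_{ij}+1},\dots,L_{\widetilde a_{ij}+|\bfa_{ij}|}$ with $(i,j)\in\sJ$.

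Next I would use \eqref{element} to write $x_\la T_{d_{|\da|}}=\bigl(\sum_{u}T_u\bigr)T_{d_{|\da|}}\,x_{\nu(|\da|)}$. Because the Young subgroup $\fS_{\nu(|\da|)}$ is the inner direct product $\prod_{(i,j)}\fS^{\widetilde a_{ij}}_{|\bfa_{ij}|}$ of subgroups with disjoint supports, $x_{\nu(|\da|)}=\prod_{(i,j)}x^{\widetilde a_{ij}}_{|\bfa_{ij}|}$ (lengths add across the factors). Since factors attached to distinct blocks commute (by (CH2) and (CH4)), I may pair each $x^{\widetilde a_{ij}}_{|\bfa_{ij}|}$ with the matching $\sigma^{\widetilde a_{ij}}_{|\bfa_{ij}|,a_{ij}^{(1)}}$ and apply Lemma~\ref{coincide 2} one block at a time to obtain
$$x_{\nu(|\da|)}\,\sigma^{\ddot\da}=\Bigl(\prod_{(i,j)\in\sJ}q^{-\widetilde a_{ij}a_{ij}^{(1)}}\Bigr)\prod_{(i,j)\in\sI}^{(\preceq)}y_{ij},\qquad y_{ij}:=x^{\widetilde a_{ij}}_{|\bfa_{ij}|}\,T_{d_{ij}}\sum_{w\in\msD^{\widetilde a_{ij}}_{|\bfa_{ij}|,\nu(d_{ij})}}T_w,$$
where $y_{ij}=x^{\widetilde a_{ij}}_{|\bfa_{ij}|}$ when $(i,j)\notin\sJ$ (then $d_{ij}=1$ and the inner sum is $1$).

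The step I expect to be the main obstacle is the rearrangement of $\prod_{(i,j)\in\sI}^{(\preceq)}y_{ij}$. The key observation is that $d_{ij}=d^{(a_{ij}^{(1)})}_{\widetilde a_{ij},|\bfa_{ij}|}$ lies in $W^{\widetilde a_{ij}}_{|\bfa_{ij}|}$ (cf.~\eqref{parallel 1}), so that, although its natural reduced word involves $t_{\widetilde a_{ij}+1}$ and hence the generators $s_0,\dots,s_{\widetilde a_{ij}}$, as a signed permutation $d_{ij}$ is supported inside the block interval $\{\widetilde a_{ij}+1,\dots,\widetilde a_{ij}+|\bfa_{ij}|\}$. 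Consequently, whenever $(i',j')\prec(i,j)$ — so block $(i',j')$ occupies strictly earlier positions — each of $x^{\widetilde a_{ij}}_{|\bfa_{ij}|}$, $T_{d_{ij}}$ and the terms $\sum_{w}T_w$ of block $(i,j)$ commutes, with lengths adding, with the corresponding data of block $(i',j')$, simply because the supports are disjoint intervals in $\fS_{2,r}$. Using this, I can move every $x^{\widetilde a_{ij}}_{|\bfa_{ij}|}$ to the far left (recovering $x_{\nu(|\da|)}$), and then move every $T_{d_{ij}}$ to the left past the terms $\sum_{w}T_w$ of all earlier blocks, to arrive at
$$x_{\nu(|\da|)}\,\sigma^{\ddot\da}=\Bigl(\prod_{(i,j)\in\sJ}q^{-\widetilde a_{ij}a_{ij}^{(1)}}\Bigr)\,x_{\nu(|\da|)}\Bigl(\prod_{(i,j)\in\sJ}^{(\preceq)}T_{d_{ij}}\Bigr)\prod_{(i,j)\in\sJ}\Bigl(\sum_{w\in\msD^{\widetilde a_{ij}}_{|\bfa_{ij}|,\nu(d_{ij})}}T_w\Bigr).$$

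To finish, I would substitute this into $\frak b_{\da}=x_\la T_{d_{|\da|}}\sigma^{\ddot\da}\sum_{w\in\msD_{\nu(|\da|)}\cap\fS_\mu}T_w$, apply $\bigl(\sum_{u}T_u\bigr)T_{d_{|\da|}}x_{\nu(|\da|)}=x_\la T_{d_{|\da|}}$ once more, and absorb the trailing product. By Lemma~\ref{right coset} applied inside each block, $\fS^{\widetilde a_{ij}}_{|\bfa_{ij}|,\nu(d_{ij})}$ is precisely the block-$(i,j)$ part of $\fS_{\nu(\da)}$, so $\fS_{\nu(\da)}=\prod_{(i,j)}\fS^{\widetilde a_{ij}}_{|\bfa_{ij}|,\nu(d_{ij})}$ sits inside $\fS_{\nu(|\da|)}\subseteq\fS_\mu$ with additivity of lengths; \eqref{DDD} then gives
$$\prod_{(i,j)\in\sJ}\Bigl(\sum_{w\in\msD^{\widetilde a_{ij}}_{|\bfa_{ij}|,\nu(d_{ij})}}T_w\Bigr)\sum_{w\in\msD_{\nu(|\da|)}\cap\fS_\mu}T_w=\sum_{w\in\msD_{\nu(\da)}\cap\fS_\mu}T_w.$$
Collecting the pieces yields the asserted formula. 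The commutation bookkeeping and the blockwise factorizations with additive lengths — routine but needing care — are where the real work lies.
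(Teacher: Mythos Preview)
Your proposal is correct and follows essentially the same route as the paper's own proof: expand $x_\la T_{d_{|\da|}}$ via \eqref{element}, factor $x_{\nu(|\da|)}\sigma^{\ddot\da}$ blockwise, apply Lemma~\ref{coincide 2} to each block, rearrange using the blockwise commutations, and then recombine the coset sums via \eqref{DDD}. The only point where you are slightly more explicit than the paper is in stressing that the commutation $T_{d_{ij}}\cdot(\sum_w T_w)_{(k,l)}=(\sum_w T_w)_{(k,l)}\cdot T_{d_{ij}}$ for $(k,l)\prec(i,j)$ rests on disjoint \emph{permutation supports} in $\fS_{2,r}$ rather than on disjoint generator sets (since the reduced word for $d_{ij}$ does pass through $s_0,\dots,s_{\widetilde a_{ij}}$); the paper simply asserts this commutation without further comment.
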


\begin{proof}Let $d=d_{|\da|}$. Then $\nu(d)=\nu(|\da|)$ satisfies $\fS_{\nu(|\da|)}=d^{-1}\fS_\la d\cap\fS_\mu$.
 By \eqref{element} and \eqref{def-b_A},
\begin{equation}\label{bij t1}
\frak b_{\da}=x_\la T_{d} \sigma^{\ddot\da}\sum_{v\in \msD_{\nu(d)}\cap \fS_\mu}T_v
=\sum_{u\in \msD^{-1}_{\nu(d^{-1})}\cap \fS_{\la}}T_uT_{d}x_{\nu(d)} \sigma^{\ddot\da}\sum_{v\in \msD_{\nu(d)}\cap \fS_\mu}T_v.
\end{equation}
Then, by the definition,
$$
x_{\nu(d)}=\prod_{i, j=1}^n x^{\widetilde a_{ij}}_{|\bfa_{ij}|}.$$
 Moreover, the terms $x^{\widetilde a_{ij}}_{|\bfa_{ij}|}$ commute with each other.
Also, by definition,
\begin{equation}\label{bij t2}
x_{\nu(d)}\sigma^{\ddot\da}=\prod_{i, j=1}^n x^{\widetilde a_{ij}}_{|\bfa_{ij}|}\sigma^{\ddot{\bfa}_{ij}}.
\end{equation}
Since $\ddot{\bfa}_{ij}\in \La(|\bfa_{ij}|,<\!2)$ is computed by the partition dual to $(a_{ij}^{(1)})$, it follows that
$$\ddot{\bfa}_{ij}=\begin{cases}(0,\ldots,0, 1, 0, \ldots,0),&\text{if }a_{ij}^{(1)}\not=0;\\
(0,\ldots,0),&\text{otherwise,}\end{cases}$$
 where the $1$ is at the $a_{ij}^{(1)}$-th position.

Note that, if $(i,j)\in\sJ$, then
$\sigma^{\ddot{\bfa}_{ij}}=\sigma^{\widetilde a_{ij}}_{|\bfa_{ij}|,a_{ij}^{(1)}}$
is the ${a_{ij}^{(1)}}$-th elementary symmetric polynomial in
$L_{\widetilde a_{ij}+1}, L_{\widetilde a_{ij}+2}, \ldots, L_{\widetilde a_{ij}+|\bfa_{ij}|}.$
  We set
$\sigma^{\ddot{\bfa}_{ij}}=1$ for $(i,j)\not\in\sJ$.
By Lemma \ref{coincide 2}, for $(i,j)\in\sJ$,
$$x^{\widetilde a_{ij}}_{|\bfa_{ij}|}\sigma^{\ddot{\bfa}_{ij}}
=x^{\widetilde a_{ij}}_{|\bfa_{ij}|}\sigma^{\widetilde a_{ij}}_{|\bfa_{ij}|,a_{ij}^{(1)}}
=q^{-\widetilde a_{ij}a_{ij}^{(1)}}\cdot x^{\widetilde a_{ij}}_{|\bfa_{ij}|}T_{d_{ij}}\sum_{x\in \msD_{ij}} T_x,$$
where $\msD_{ij}=\msD^{\widetilde a_{ij}}_{|\bfa_{ij}|, \nu(d_{ij})}$ is the set of
shortest right coset representatives of
$$\fS_{\nu(d_{ij})}=\fS_{(1^{\widetilde a_{ij}},\bfa_{ij},1,\ldots)}=d_{ij}^{-1}\fS^{\widetilde a_{ij}}_{|\bfa_{ij}|}d_{ij}\cap \fS^{\widetilde a_{ij}}_{|\bfa_{ij}|}\quad
\text{ in }\;\;\fS^{\widetilde a_{ij}}_{|\bfa_{ij}|}.$$
By substituting into \eqref{bij t2}, we obtain that
 $$\aligned
 x_{\nu(d)} \sigma^{\ddot\da}&= \prod_{(i, j)\in\sI}^{(\preceq)} \bigg(q^{-\widetilde a_{ij}a_{ij}^{(1)}}\cdot x^{\widetilde a_{ij}}_{|\bfa_{ij}|}
 T_{d_{ij}}\big(\sum_{x\in \msD_{ij}} T_x\big)\bigg)\\
&=\bigg(\prod_{(i, j)\in\sJ} q^{-\widetilde a_{ij}a_{ij}^{(1)}}\bigg) x_{\nu(d)}\prod_{(i, j)\in\sJ}^{(\preceq)} T_{d_{ij}}\prod_{(i,j)\in\sJ}^{(\preceq)} \big(\sum_{x\in \msD_{ij}} T_x\big),\\
\endaligned$$
where $(\preceq)$ indicates the products are taken over the order $\preceq$ defined in \eqref{prec}.
The second equality is seen from the fact that if $(k,l)\preceq(i,j)$, then $ T_{d_{kl}}\big(\sum_{x\in \msD_{kl}} T_x\big)$ commutes with $x^{\widetilde a_{ij}}_{|\bfa_{ij}|}$, and  $\sum_{x\in \msD_{kl}} T_x$ with $T_{d_{ij}}$.
 Then \eqref{bij t1} becomes
 \begin{equation}\label{bij t4}
\aligned\frak b_{\da}&=\sum_{u\in \msD^{-1}_{\nu(d^{-1})}\cap \fS_{\la}}T_uT_{d}x_{\nu(d)}
\sigma^{\ddot\da}\sum_{v\in \msD_{\nu(d)}\cap \fS_\mu}T_v\\
&=\sum_{u\in \msD^{-1}_{\nu(d^{-1})}\cap \fS_{\la}}T_uT_{d}x_{\nu(d)}\prod_{i, j=1}^nq^{-\widetilde a_{ij}a_{ij}^{(1)}}
T_{d_{ij}}\prod_{i,j=1}^n\big(\sum_{x\in \msD_{ij}} T_x\big)\sum_{v\in \msD_{\nu(d)}\cap \fS_\mu}T_v\\
&=\bigg(\prod_{i,j=1}^n q^{-\widetilde a_{ij}a_{ij}^{(1)}}\bigg)x_\la T_d\prod_{i,j=1}^n T_{d_{ij}}\prod_{i,j=1}^n \big(\sum_{x\in \msD_{ij}}T_x\big)
\sum_{v\in \msD_{\nu(d)}\cap \fS_\mu}T_v.\\
\endaligned
\end{equation}
By Lemma \ref{7 crucial},
 $$\prod_{i, j=1}^n\big(\sum_{x\in \msD_{ij}}T_x\big)=\sum_{w\in \msD_{\nu(\da)}\cap \fS_{\nu(|\da|)}}T_w,$$
it follows by \eqref{DDD} that
 \begin{equation*}\label{bij t5}
\aligned
{}\prod_{i, j=1}^n\big(\sum_{x\in \msD_{\nu(d_{ij})}} T_x\big)\sum_{v\in \msD_{\nu(d)}\cap \fS_\mu}T_v
=\sum_{w\in \msD_{\nu(\da)}\cap \fS_{\nu(|\da|)}}T_w\sum_{v\in \msD_{\nu(|\da|)}\cap \fS_\mu}T_v
=\sum_{w\in \msD_{\nu(\da)}\cap \fS_\mu}T_w.
\endaligned
\end{equation*}
Substituting into  \eqref{bij t4} gives the desired formula.
\end{proof}
\begin{proof}[\bf Proof of Theorem \ref{double coset basis}(2)] Let $w$ be the element given in \eqref{w=dt}
and assume that $d_\da$ is the shortest representative of $\fS_\la w\fS_\mu$. We claim that if
$\ddot{d}=d\cdot\prod_{(i,j)\in\sJ}^{(\preceq)} d_{ij}$, then $\fS_\la\ddot{d}\fS_\mu=\fS_\la w\fS_\mu$.
Indeed, for $a=\widetilde{a}_{ij}$, $b=|a_{ij}|$ and $c=a_{ij}^{(1)}>0$, we have $d_{ij}=1=t_{ij}$
if $c=0$, $d_{ij}=t_{a+1}=t_{ij}$ if $c=1$, and, for $2\leq c\leq b$
$$\aligned
d_{ij}&=t_{a+1}\cdot(s_{a+1}t_{a+1})\cdot(s_{a+2}s_{a+1}t_{a+1})\cdot\cdots (s_{a+c-1}s_{a+c-2}\cdots s_{a+1}t_{a+1})\\
&=t_{a+1}\cdot(t_{a+2}s_{a+1})\cdot(t_{a+3}s_{a+2}s_{a+1})\cdot\cdots (t_{a+c}s_{a+c-1}s_{a+c-2}\cdots s_{a+1})\\
&=t_{ij}\cdot s_{a+1}\cdot (s_{a+2}s_{a+1})\cdot(s_{a+3}s_{a+2}s_{a+1})\cdots (s_{a+c-1}s_{a+c-2}\cdots s_{a+1}),
\endaligned
$$
(thus, $d_{ij} d_{kl}=d_{kl} d_{ij}$) proving the claim. By Proposition \ref{coincide 3}, we have in the group algebra
$\frak b_{\da}=\underline{\fS_\la} \cdot{\ddot d}\cdot\underline{\msD_{\nu(\da)}\cap \fS_\mu}
=\underline{\fS_\la} \cdot{ d_\da}\cdot\underline{\msD_{\nu(\da)}\cap \fS_\mu}
=\underline{\fS_\la d_\da\fS_\mu}.$

\end{proof}

\begin{Example} Let $n=2$ and $r=3$. For the matrix
 $$\da=
\begin{pmatrix}
(0,0) & (1,0) \\
(1,1) & (0,0) \\
\end{pmatrix}\in {\Theta}_2(2,3)\quad\text{ with }\;|\da|=
\begin{pmatrix}
0 & 1 \\
2 & 0 \\
\end{pmatrix},$$
we have $\la=\ro(|\da|)=(1,2)$, $\mu=\co(|\da|)=(2,1)$, and $\nu(|\da|)=(0,2,1,0)$. Then $\fS_\la=\{1, s_2\}$,
$\fS_\mu=\{1,s_1\}$, $d_{|\da|}=s_1s_2$, and
$$\ddot{d}=d_{|\da|}d_{21}d_{12}=s_1s_2t_1t_3=s_0s_1s_0s_2.$$
But
$$T_{|\da|}T_{t_1}T_{t_3}=q^2T_{0102}+q(q-1)T_{10102}+(q-1)T_{1201012},$$
where we write $T_w=T_{i_1\cdots i_l}$ whenever $w=s_{i_1}\cdots s_{i_l}$ is reduced. Note that $d_\da=s_0s_1s_0s_2$ with $\nu(d_\da)=\nu(\da)=(0,0,1,1,1,0,0,0)$. By Proposition \ref{coincide 3},
$$\aligned
\frak b_{\da}&=q^{-2} x_\la T_{|\da|}T_{t_1}T_{t_3} x_\mu\\
&=x_\la T_{0102}x_\mu+q^{-1}(q-1)x_\la T_{10102}x_\mu+q^{-1}(q-1)x_\la T_{120102}x_\mu\\
&=T_{\fS_\la {d_\da}\fS_\mu}+q^{-1}(q-1)T_{\fS_\la {d'}\fS_\mu}+q^{-1}(q-1)T_{\fS_\la {d''}\fS_\mu},\endaligned$$
where $d'=s_1s_0s_1s_0s_2$ and $d''=s_1s_2s_0s_1s_0s_2$.
\end{Example}

\begin{Rem}\label{octah}
The slim cyclotomic $q$-Schur algebra $\tbsSr$ is isomorphic to the hyperoctahedral Schur algebras
(defined over $\scR=\mbz[q, q^{-1},Q,Q^{-1}]$ in indeterminates $q,Q$); see \cite[Defs 3.2.1, 4.3.3]{Gr97}
and \cite[Lem 4.3.4]{Gr97}. Note that our labelling $0,1,2,\ldots, r-1$ of the type B/C Dynkin diagram
corresponds to their labelling $r,\ldots,2,1$. The isomorphism can be induced from this correspondence.
By \cite[Rem. 6.3]{BKLW}, the type $C$ Schur algebra $\Si$ geometrically defined in \cite[\S6.1]{BKLW}
is isomorphic to the hyperoctahedral Schur algebra. Hence, $\tbsSr$ is isomorphic to
$\Si$.
\end{Rem}

\section{The cyclotomic Schur--Weyl duality: double centraliser property}

Let $C=(c_{ij})_{n\times n}$ be the Cartan matrix of affine type $A$; see, e.~g., \cite[(1.2.3.1)]{DDF}.

\begin{Def}Let $\bfU_\up(\afsl)$ be the quantum enveloping algebra over the field $\mathbb Q(\up)$ of
rational functions in indeterminate $\up$ associated with
the Cartan matrix $C$. Then it can be presented by the generators
$E_i,\ F_i,\  K_i,\ K_i^{-1}$ $(1\leq i\leq n)$ and the relations (QGL1)--(QGL5) in
\cite[Thm 2.3.1]{DDF}.\footnote{The quantum enveloping algebra $\bfU_\up(\afsl)$ here is called
the extended quantum affine $\mathfrak{sl}_n$ defined in \cite[p.~43]{DDF}. The non-extended version
is defined in \cite[\S1.3]{DDF}.} Define the quantum affine $\mathfrak{gl}_n$ by
$$\bfU_\up(\afgl)=\bfU_\up(\afsl)\otimes \mathbb Q(\up)[\sfc^+_1,\sfc_1^-, \sfc^+_2,\sfc_2^-,\ldots,\sfc_s^+,\sfc_s^-,\ldots],$$
where $\sfc_s^+,\sfc_s^-\; (s\in\mbz^+)$ are commuting indeterminates.
\end{Def}
The algebra $\bfU_\up(\afgl)$ is also a Hopf algebra with the structure maps $\Delta$, $\varepsilon$, and
$\sigma$ acting on the $\widehat{\mathfrak{sl}}_n$ part as usual plus the following extra action on the $\sfc_s^\pm$:
$$\Delta(\sfc_s^\pm)=\sfc_s^\pm\otimes 1+1\otimes\sfc^\pm_s,\quad \varepsilon(\sfc_s^\pm)=0,\quad\sigma(\sfc_s^\pm)=-\sfc_s^\pm.$$
This Hopf algebra is also isomorphic to the quantum loop algebra of $\mathfrak{gl}_n$, known as the
Drinfeld new realisation in \cite{Dr88}; see \cite[Thm 2.5.3]{DDF}.

The quantum group $\bfU_\up(\afgl)$ has a second construction in terms of a double Ringel--Hall algebra. This
new construction is crucial in the study of the (enhanced) affine quantum Schur--Weyl duality as presented in
\cite{DDF}. In this section, we investigate new applications of this theory to its cyclotomic counterpart.
We first briefly review the definition of the double Ringel--Hall algebra associated with a cyclic quiver.

 Let $\triangle=\triangle(n)$ ($n\geq 2$) be
the cyclic quiver with vertex set $I=\mbz/n\mbz$ and arrow set $\{i\longrightarrow i+1\}_{i\in I}$.
 Let $\Hall=\text{span}\{u_A\mid A\in\Theta_{\vtg}^+(n)\}$ be the Ringel--Hall algebra over $\sZ:=\mathbb Z[\up,\up^{-1}]$
 associated with $\triangle(n)$ and let
$\boldsymbol{\mathfrak H}_\vtg(n)=\Hall\otimes_\sZ \mbq(\up)$.
There are extended Ringel--Hall algebras over $\mathbb Q(\up)$
$$\Hallpi=\boldsymbol{\mathfrak H}_\vtg(n)\otimes \mbq(\up)[K_1^{\pm1},\ldots,K_n^{\pm1}]$$ and, similarly, $\Hallmi$
which possess the Green--Xiao Hopf algebra structure.
By considering a certain skew--Hopf pairing $\psi:\Hallpi\times\Hallmi\to\mathbb Q(\up)$, one defines
the Drinfeld double $\widehat\dHallr$ as the quotient algebra of the free product $\Hallpi*\Hallmi$ by an ideal defined by $\psi$; see \cite[\S2.1]{DDF}.
The {\it double Ringel--Hall algebra} is a reduced version of $\widehat\dHallr$:
$$\dHallr=\widehat\dHallr/{\scr I}\cong \boldsymbol{\mathfrak H}^+_\vtg(n)\otimes \mbq(\up)[K_1^{\pm1},\ldots,K_n^{\pm1}]
\otimes\boldsymbol{\mathfrak H}^-_\vtg(n)$$
 where $\scr I$ denotes the ideal generated by $1\otimes K_\al-K_\al\ot 1$ for
all $\al\in\mbz I$ and ${\mathfrak H}^+_\vtg(n)={\mathfrak H}_\vtg(n)$, ${\mathfrak H}^-_\vtg(n)={\mathfrak H}_\vtg(n)^{\text{op}}$.

Each vertex $i\in I$ of $\tri$ defines a simple representation which in turn defines generators $u^+_i$
and $u^-_i$.  There are so-called Schiffmann--Hubery generators $\sfz_s^+$ and $\sfz_s^-$ for $\dHallr$ constructed in \cite[\S2.3]{DDF}.
By \cite[Thm 2.3.1]{DDF}, there is a Hopf algebra isomorphism from $\bfU_\up(\afgl)$ to $\dHallr$ given by
$$E_i\longmapsto u^+_i,\; F_i\longmapsto u^-_i,\; K_i^\pm\longmapsto K_i^\pm,\;\sfc_s^\pm\longmapsto\sfz_s^\pm.$$
We identify the two algebras under this isomorphism.

Let $\Og$ be a free $\sZ$-module with basis $\{\og_i\mid
i\in\mbz\}$ and let $\bfOg=\Og\otimes_\sZ\mathbb Q(\up)$.
Using the action given in \cite[(3.5.0.1)]{DDF}, $\bfOg$ becomes a left $\dHallr$-module. Hence, the Hopf algebra
structure on $\dHallr$ induces a $\dHallr$-module structure on $\bfOg^{\otimes r}$. On the other hand,
$\Og^{\otimes r}$ has a right $\afsHr_\sZ$-module structure; see \cite[(3.3.0.4)]{DDF}, commuting with the left
$\dHallr$-module structure. Thus, we obtain a $\mathbb Q(\up)$-algebra homomorphism
\begin{equation}\label{xr}
\xi_r:\dHallr\lra\text{End}_{\afsHr_{\mathbb Q(\up)}}(\bfOg^{\otimes r})={\afsSr_{\mathbb Q(\up)}}.
\end{equation}

Let $\fD_\vtg(n)_\sZ^0$ denote the $\sZ$-subalgebra of $ \dHallr$ generated by $K_i^{\pm1}$ and
$\leb{K_i;0\atop t}\rib$ for $i\in I$ and $t>0$ and put $\fD_\vtg(n)_\sZ^\pm={\mathfrak H}_\vtg(n)^\pm$;
see \cite[\S2.4]{DDF} for the details.

\begin{Thm}Maintain the notations introduced above.
\begin{itemize}
\item[(1)] The $\sZ$-submodule
$$\fD_\vtg(n)_\sZ:=\Hall^+\fD_\vtg(n)^0\Hall^-\cong\Hall^+\ot\fD_\vtg(n)^0\ot\Hall^-$$
 is a Hopf $\sZ$-subalgebra of $ \dHallr$.
\item[(2)] There is a $\sZ$-algebra isomorphism $\afsSr_\sZ\cong \text{End}_{\afsHr_\sZ}(\Og^{\otimes r})$.
\item[(3)] The restriction of $\xi_r$ to $\fD_\vtg(n)_\sZ$ induces a $\sZ$-algebra epimorphism
$$\xi_r:\fD_\vtg(n)_\sZ\lra\text{End}_{\afsHr_\sZ}(\Og^{\otimes r}).$$
\item[(4)]
The map $\xi_r$ induces an $\scR$-algebra epimorphism
$$\xi_{r,\scR}:\fD_\vtg(n)_\sZ\otimes\scR\lra\sS_\vtg(n,r)_\scR.$$

\end{itemize}
\end{Thm}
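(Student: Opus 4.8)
The plan is to derive all four assertions from the integral affine quantum Schur--Weyl duality of \cite{DDF}, the genuinely new points being the identification in (2) of the affine $q$-Schur algebra $\afsSr_\sZ$ of this paper with $\End_{\afsHr_\sZ}(\Og^{\otimes r})$, and the base-change statement in (4). For (1), I would recall from \cite[\S2.4]{DDF} that $\fD_\vtg(n)_\sZ=\Hall^+\fD_\vtg(n)_\sZ^0\Hall^-$ is the $\sZ$-subalgebra of $\dHallr$ generated by the $u_i^\pm$, $K_i^{\pm1}$ and the divided Cartan elements $\leb{K_i;0\atop t}\rib$, and that it is $\sZ$-free with a PBW-type basis realising the displayed triangular tensor decomposition. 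To see it is a Hopf $\sZ$-subalgebra it suffices to evaluate $\Delta,\varepsilon,\sigma$ on these generators: for $u_i^\pm$ and $K_i^{\pm1}$ integrality is immediate from the Hopf structure on $\dHallr$, and for $\leb{K_i;0\atop t}\rib$ one uses the standard Lusztig-type integral comultiplication formula expressing $\Delta\big(\leb{K_i;0\atop t}\rib\big)$ as a $\sZ$-combination of elements $K_i^{a}\leb{K_i;0\atop b}\rib\ot\leb{K_i;0\atop c}\rib$, which lie in $\fD_\vtg(n)_\sZ\ot\fD_\vtg(n)_\sZ$; the cases of $\varepsilon$ and $\sigma$ are similar.

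For (2), the key is that the tensor space $\Og^{\otimes r}$, with the right $\afsHr_\sZ$-module structure of \cite[(3.3.0.4)]{DDF}, is isomorphic as a right $\afsHr_\sZ$-module to $\sT_\vtg(n,r)_\sZ=\bigoplus_{\la\in\La(n,r)}x_\la\afsHr_\sZ$: a basis vector $\og_{i_1}\ot\cdots\ot\og_{i_r}$ with weakly increasing multi-index of weight $\la$ corresponds to the generator $x_\la$, and the invertible generators $X_j^{\pm1}$ of $\afsHr$ sweep the indices through all of $\mbz^r$. Taking $\End_{\afsHr_\sZ}(-)$ of both sides yields the $\sZ$-algebra isomorphism $\afsSr_\sZ\cong\End_{\afsHr_\sZ}(\Og^{\otimes r})$, compatible after $-\ot_\sZ\mbq(\up)$ with the defining identification $\afsSr_{\mbq(\up)}=\End_{\afsHr_{\mbq(\up)}}(\bfOg^{\otimes r})$ used in \eqref{xr}.

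For (3), I would begin with the surjectivity of $\xi_r$ over $\mbq(\up)$, which is the affine quantum Schur--Weyl duality \cite[\S3.8]{DDF}. That the restriction of $\xi_r$ lands in the integral endomorphism ring is checked on the algebra generators of $\fD_\vtg(n)_\sZ$: the explicit actions of $u_i^\pm$, $K_i^{\pm1}$ and $\leb{K_i;0\atop t}\rib$ on the $\sZ$-basis $\{\og_\bfi\}$ of $\Og^{\otimes r}$ recorded in \cite[(3.5.0.1)]{DDF} have all matrix entries in $\sZ$, so $\xi_r(\fD_\vtg(n)_\sZ)\subseteq\End_{\afsHr_\sZ}(\Og^{\otimes r})$. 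For surjectivity at the integral level one invokes the fact that the integral affine $q$-Schur algebra $\afsSr_\sZ\cong\End_{\afsHr_\sZ}(\Og^{\otimes r})$ (by (2)) is generated over $\sZ$ by the images of the affine Chevalley-type generators together with their divided powers (see \cite[Ch.~3]{DDF}), each of which already lies in $\xi_r(\fD_\vtg(n)_\sZ)$. I expect this integral surjectivity to be the main obstacle, since it requires matching the $\sZ$-form produced by the divided powers inside $\fD_\vtg(n)_\sZ$ with an explicit generating set of $\afsSr_\sZ$, rather than merely knowing the image is some $\sZ$-lattice of full rank in $\afsSr_{\mbq(\up)}$.

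For (4), I would apply $-\ot_\sZ\scR$ to the $\sZ$-algebra epimorphism of (3). The target commutes with this base change: by the affine analogue of Lemma \ref{permutation mod}, each $\Hom_{\afsHr}(x_\mu\afsHr,x_\la\afsHr)\cong x_\la\afsHr\cap\afsHr x_\mu$ is $\sZ$-free with an explicit PBW-type basis, and the formation of these Hom-spaces is compatible with $-\ot_\sZ\scR$; hence $\End_{\afsHr_\sZ}(\Og^{\otimes r})\ot_\sZ\scR\cong\sS_\vtg(n,r)_\scR$. Composing $\xi_r\ot\id_\scR$ with this isomorphism, and using right-exactness of $-\ot_\sZ\scR$ to retain surjectivity, gives the required $\scR$-algebra epimorphism $\xi_{r,\scR}\colon\fD_\vtg(n)_\sZ\ot\scR\lra\sS_\vtg(n,r)_\scR$.
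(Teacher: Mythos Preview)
Your proposal is correct and follows essentially the same route as the paper: the theorem is not proved from scratch but assembled from the integral affine theory in \cite{DDF} and \cite{DF17}. The paper's own justification is simply that (1) is established in \cite{DF17}, (2) is \cite[Prop.~3.3.1]{DDF}, (3) follows from \cite[Thm~3.7.7]{DDF} together with the proof of \cite[Thm~3.8.1]{DDF}, and (4) follows from (2) by base change. Two small remarks: for (1) you point to \cite[\S2.4]{DDF}, but the Hopf $\sZ$-subalgebra assertion is actually taken from the later paper \cite{DF17}, so your sketch via explicit comultiplication formulas on generators is doing a bit more than the paper claims here; and for (3) your anticipated obstacle, matching the integral image with a generating set of $\afsSr_\sZ$, is exactly what the cited \cite[Thm~3.7.7]{DDF} and the argument of \cite[Thm~3.8.1]{DDF} supply.
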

Note that (1) is done in \cite{DF17}; (2) is given in \cite[Prop.~3.3.1]{DDF}; (3) can be derived from
 \cite[Thm 3.7.7]{DDF} and the proof of \cite[Thm 3.8.1]{DDF}, and (4) follows from (2).
 Note also that an explicit action of the generators of $\fD_\vtg(n)_\sZ$ on $\Og$ can be found in  \cite[Prop.~3.5.3]{DDF}.

We now extend the algebra homomorphism to slim cyclotomic $q$-Schur algebras. Recall the canonical
epimorphism $\ep_\bfu:\afsHr\ra \sH_\bfu(r)$ given in \eqref{epimor-af-AK-alg} under the assumption
that all the $u_i\in\scR$ in $\mathbf m$ are invertible. Regarding $\cysHr$ as an $\afsHr$-module via this map,
 we obtain the canonical right $\cysHr$-module isomorphism
$$\sT_\vtg(n,r)\ot_{\afsHr}\sH_\bfu(r)=\bigoplus_{\la\in\La(n,r)}x_\la \afsHr\ot_{\afsHr}\sH_\bfu(r)\cong
\bigoplus_{\la\in\La(n,r)}x_\la \sH_\bfu(r)=\sT_\bfu(n,r).$$
Thus, we obtain an $\scR$-algebra homomorphism
$$\ti\ep_\bfu:\afsSr\lra \sS_\bfu(n,r),\;f\lm f\ot\id.$$

 \begin{Prop} \label{compatibility-quotients}
Suppose that all the $u_i\in\scR$ are invertible. The $\scR$-algebra homomorphism $\ti\ep_\bfu$ is surjective.
In particular, the map $\xi_{r,\scR}$  extends to the $\scR$-algebra epimorphism
$$\ti\ep_\bfu\circ\xi_{r,\scR}:\fD_\vtg(n)_\sZ\otimes\scR\lra\cysSr_\scR.$$
\end{Prop}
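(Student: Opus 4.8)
The plan is to prove that $\ti\ep_\bfu$ is surjective by exhibiting explicit preimages of the matrix basis $\{\Phi_{\da}\mid\da\in\cnrm\}$ of $\sS_\bfu(n,r)$ from Theorem \ref{thm-standard-basis}, and then to obtain the ``in particular'' statement by composing epimorphisms. Since that set is an $\scR$-basis of $\sS_\bfu(n,r)$, it suffices to produce, for each $\da\in\cnrm$, an element $\Phi_{\da}^\vtg\in\afsSr$ with $\ti\ep_\bfu(\Phi_{\da}^\vtg)=\Phi_{\da}$.

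First I would set up the affine analogue of $\frak b_{\da}$. Writing $\la=\ro(|\da|)$ and $\mu=\co(|\da|)$, replace each $L_j$ by $X_j$ in \eqref{sigddA} and \eqref{def-b_A} to form $\sigma^{\ddot\da}(X_1,\ldots,X_r)\in\afsHr$ and $\frak b_{\da}^\vtg=x_\la T_{d_{|\da|}}\,\sigma^{\ddot\da}(X)\sum_{w\in\msD_{\nu(d_{|\da|})}\cap\fS_\mu}T_w$. By the affine versions of Lemma \ref{sym} and Proposition \ref{symm-funct-composition}(2) recorded in Remarks \ref{aff1} and \ref{aff2} (concretely, via Lemma \ref{afsym} in the appendix), one has the affine commutation relation $x_{\nu(d_{|\da|})}\sigma^{\ddot\da}(X)=\sigma^{\ddot\da}(X)x_{\nu(d_{|\da|})}$; feeding this into the argument behind Theorem \ref{standard-basis-thm} (its proof in Appendix A), carried out verbatim with $X$'s in place of $L$'s, gives $\frak b_{\da}^\vtg\in x_\la\afsHr\cap\afsHr x_\mu$. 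Hence $\frak b_{\da}^\vtg$ defines an endomorphism $\Phi_{\da}^\vtg\in\afsSr=\End_{\afsHr}(\sT_\vtg(n,r))$ by $x_\rho h\mapsto\delta_{\rho,\mu}\,\frak b_{\da}^\vtg h$, which is exactly the element $\Phi_{\da}^\vtg$ of Remark \ref{aff3}(1).

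Next I would compute $\ti\ep_\bfu(\Phi_{\da}^\vtg)$ through the identification $\sT_\vtg(n,r)\otimes_{\afsHr}\cysHr\cong\sT_\bfu(n,r)$ that defines $\ti\ep_\bfu$. Under $f\mapsto f\otimes\id$, the map $\Phi_{\da}^\vtg\otimes\id$ sends $x_\rho\otimes\bar h$ to $\delta_{\rho,\mu}\,\frak b_{\da}^\vtg\otimes\bar h$, which the isomorphism identifies with $\delta_{\rho,\mu}\,\ep_\bfu(\frak b_{\da}^\vtg)\bar h\in\sT_\bfu(n,r)$. As $\ep_\bfu$ is an algebra homomorphism with $\ep_\bfu(T_i)=T_i$ and $\ep_\bfu(X_j)=L_j$, it fixes $x_\la$, $T_{d_{|\da|}}$ and every $T_w$, and carries $\sigma^{\ddot\da}(X)$ to $\sigma^{\ddot\da}(L)$; therefore $\ep_\bfu(\frak b_{\da}^\vtg)=\frak b_{\da}$ and $\ti\ep_\bfu(\Phi_{\da}^\vtg)=\Phi_{\da}$. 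Letting $\da$ run over $\cnrm$ puts the whole basis of $\sS_\bfu(n,r)$ in the image, so $\ti\ep_\bfu$ is surjective. Finally, since $\xi_{r,\scR}:\fD_\vtg(n)_\sZ\otimes\scR\to\sS_\vtg(n,r)_\scR$ is an $\scR$-algebra epimorphism by part (4) of the preceding theorem, the composite $\ti\ep_\bfu\circ\xi_{r,\scR}$ is a composition of surjective algebra homomorphisms onto $\cysSr_\scR$, hence an epimorphism, which proves the last assertion.

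The step I expect to be the real content is the verification $\frak b_{\da}^\vtg\in x_\la\afsHr\cap\afsHr x_\mu$ — that is, that the Section 3--4 construction survives replacing the finite range $\mbz_m^r$ of $L$-exponents by the full lattice $\mbz^r$ of $X$-exponents. The delicate ingredient is the affine refinement of Lemma \ref{sym}: one must still deduce from $zT_i=qz$ that the coefficients satisfy $c_\bfa=c_{\bfa s_i}$, now without the truncation $0\le a_i\le m-1$ that organised the induction in Cases 1 and 2 of that proof; this is the purpose of Lemma \ref{afsym}. Once that is granted, everything else — the tensor-identification bookkeeping above and the composition of epimorphisms — is routine.
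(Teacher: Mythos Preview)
Your proposal is correct and follows the same route as the paper's proof: define the affine lifts $\Phi_\da^\vtg$ by replacing each $L_j$ with $X_j$, observe $\ti\ep_\bfu(\Phi_\da^\vtg)=\Phi_\da$, and conclude surjectivity since the $\Phi_\da$ form an $\scR$-basis of $\cysSr$. One small mis-citation: the commutation $x_{\nu(d_{|\da|})}\sigma^{\ddot\da}(X)=\sigma^{\ddot\da}(X)x_{\nu(d_{|\da|})}$ needed for $\frak b_\da^\vtg\in\afsHr x_\mu$ comes from the affine analogue of Proposition~\ref{symm-funct-composition}(1) (ultimately the Bernstein relation \eqref{Bernstein-formula}), not from Lemma~\ref{afsym}; the latter shows that such symmetric elements \emph{exhaust} the intersection, which is more than you need here.
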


 \begin{proof} Consider the linearly independent elements $\Phi_\da^\vtg$, $\da\in\cnrm$, for $\afsSr$,
 which are defined similarly to the basis elements $\Phi_\da$ with all $L_i$ replaced by $X_i$; see Remarks \ref{aff1}, \ref{aff2}
 and \ref{aff3}. By definition, we see easily that $\ti\ep_\bfu(\Phi_\da^\vtg)=\Phi_\da$. In other words,
 the images of these elements form a basis for $\cysSr$. Hence, $\ti\ep_\bfu$ is surjective.
 \end{proof}

 \begin{Rems} (1) The surjectivity of $\ti\ep_\bfu$ was given in \cite[Prop.~5.7(a)]{LR}.

 (2) For any $t\geq0$, let $ \dHallr^{(t)}=\bfU_\up(\afsl)\otimes \mathbb Q(\up)[\sfc^+_1,\sfc_1^-, \sfc^+_2,\sfc_2^-,\ldots,\sfc_t^+,\sfc_t^-]$. This is a Hopf subalgebra of $ \dHallr$.
 In fact, by \cite[Thm 3.8.1(2)]{DDF}, if $t$ satisfies $r=tn+t_0$, where $0\leq t_0<n$, then the
 restriction of the map $\xi_r$ in \eqref{xr} gives an epimorphism
 $$\xi_r^{(t)}:\dHallr^{(t)}\lra{\afsSr_{\mathbb Q(\up)}}.$$
  This together with $\ti\ep_\bfu$ induces an epimorphism
$$\xi_{r,\bfu}^{(t)}:\dHallr^{(t)}\lra{\sS_\bfu(n,r)_{\mathbb Q(\up)}}.$$
It would be interesting to conjecture that there exists a $t$, independent of $r$, such that
$\xi_{r,\bfu}^{(t)}$ is surjective for all $r\geq 0$.

Note that, if $\bfU_\up(\mathfrak{gl}_n)$ denotes the subalgebra generated by $E_i,F_i, K_j^{\pm1}$
for $1\leq i,j\leq n,i\neq n$, then $\dHallr^{(t)}$ contains the subalgebra
$$\bfU^{(t)}_\up(\mathfrak{gl}_n):=\bfU_\up(\mathfrak{gl}_n)\otimes
\mathbb Q(\up)[\sfc^+_1,\sfc_1^-, \sfc^+_2,\sfc_2^-,\ldots,\sfc_t^+,\sfc_t^-].$$
If the conjecture were true and the restriction of $\xi_{r,\bfu}^{(t)}$ to $\bfU^{(t)}_\up(\mathfrak{gl}_n)$
remained surjective, then the new object $\bfU^{(t)}_\up(\mathfrak{gl}_n)$ would be called
the {\it cyclotomic quantum $\mathfrak{gl}_n$.}

(3) We will prove in \cite{DDY} that if $\cysHr$ has a semisimple bottom in the sense of \cite{DR2},
then the algebra homomorphism
$$\xi_{r,\bfu}^\vee:\cysHr\lra\End_{\cysSr}(\Omega^{\otimes r})$$
is surjective. So it is natural to conjecture that the cyclotomic double centraliser property holds in general.
 \end{Rems}

\section {The cyclotomic Schur--Weyl duality: Morita equivalence}

As part of the Schur--Weyl duality, it is well-known that, for any field $\scK$, the $q$-Schur algebra
$\sS_q(n,r)_\scK$ is Morita equivalent to the Hecke algebra $\sH_q(r)_\scK$ if $n\geq r$ and $q$ is not
a root of the Poincar\'e polynomial of $\fS_r$. We now establish a similar result for the cyclotomic counterpart.

As in previous sections, $\scR$ is a commutative ring and $q\in \scR$ is invertible. Recall the notations $\cysHr=\cysHr_\scR$ and  $\cysSr=\cysSr_\scR$ and
from Proposition \ref{q-Sch} that $\sS_q(n,r)$ is (isomorphic to) a subalgebra of $\cysSr$.

Every $\lambda=(\la_1, \ldots, \la_n)\in \Lambda(n,r)$ gives a diagonal matrix $D_\la=\diag(\la)$ which in turn
defines a matrix $D_\la^{(m)}\in\cnrm$ via the embedding $\iota^{(m)}$ in \eqref{iota}. Then
$\Phi_{D_\la^{(m)}}=\frak l_\la$ is the idempotent map defined in \eqref{1la}. Note that all $\mathfrak l_\la$
live in the $q$-Schur subalgebra $\sS_q(n,r)$ of $\cysSr$; see Proposition \ref{q-Sch}.
In particular, $\frak l_\la^2=\frak l_\la$, $\frak l_\la\frak l_\mu=\delta_{\la, \mu}\frak l_\la$,
for $\la, \mu\in \Lambda(n,r)$ and
\begin{equation}\label{111}
 \sum_{\la\in \Lambda(n,r)}\frak l_\la=1.
 \end{equation}
Moreover, like the $q$-Schur algebra case, we have, for each $\da\in\cnrm$, the following relations in $\sS_\bfu(n,r)$

$$\Phi_{\da}\frak l_\la=\begin{cases} \Phi_{\da},\;\;  \text{if}\; \la=\col(|\da|);\\
0,\;  \;\; \;\;\text{otherwise }
\end{cases}\;\;\text{and}\;\;
\frak l_\la\Phi_{\da}=\begin{cases} \Phi_{\da},\;\;  \text{if}\; \la=\row(|\da|);\\
0,\;  \;\; \;\;\text{otherwise }.
\end{cases}
$$

{\it In the rest of the section, we assume $n\geq r$ and let $\omega=(\underbrace{1, \ldots, 1}_r,0, \ldots, 0)\in \Lambda(n,r)$.}
Then $\cysHr$ is a centraliser algebra of $\cysSr$ and \eqref{mumu} becomes
$$\frak l_\omega\sS_\bfu(n,r)\frak l_\omega\cong \cysHr.$$

We now consider some particular basis elements given in Theorem \ref{thm-standard-basis}.
For $\la,\mu\in \Lambda(n,r)$, let $\Phi_{\la, \mu}=\Phi_{A^{(m)}}\in\sS_\bfu(n,r)$, where $A=\th(\la,1,\mu)$
as defined in \eqref{theta}. In other words,
$$A=(|R_i^\la\cap R_j^\mu|)_{1\leq i,j\leq n}$$
where $R_i^\la, R_j^\mu$ are defined in \eqref{partial sum}. Thus, by definition,
\begin{equation}\label{particular elements}
\Phi_{\la, \mu}(x_\nu h)=\delta_{\nu, \mu}x_\la\sum_{w\in \msD_{\la\cap\mu}\cap \fS_\mu}T_w h,\quad\forall h\in \cysHr,
 \end{equation}
where $\fS_{\la\cap\mu}=\fS_\la\cap \fS_\mu$. In particular, if $\la=\mu=\nu$, then
\begin{equation}\label{special elements}\Phi_{\la, \la}(x_\la)=x_\la=\frak l_\la(x_\la)\text{ or }\Phi_{\la,\la}=\frak l_\la.
\end{equation}

For each $\la=(\la_1,\ldots,\la_n)\in\Lanr$, let $$P_\la(q)=P_{\fS_\la}(q)=\sum_{w\in \fS_\la}q^{\ell(w)}.$$
Then $x_\la^2=P_\la(q) x_\la$ for $\la\in \Lambda(n,r)$. Note that every $P_\la(q)$ is a factor of $P_{\fS_r}(q)$.

\begin{Prop}\label{Morita-equivalence} Keep the notations above and suppose that $n\geq r$.
If $P_{\fS_r}(q)$ is invertible in $\scR$, then $\sS_\bfu(n,r)$ and $\cysHr$
 are Morita equivalent.
\end{Prop}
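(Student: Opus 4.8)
The plan is to reduce the statement to the assertion that the idempotent $\frak l_\omega\in A:=\sS_\bfu(n,r)$ is \emph{full}, i.e. that $A\frak l_\omega A=A$. Once fullness is established, the standard Morita-equivalence criterion for idempotents applies: a full idempotent $e$ in a ring $A$ makes $A$ Morita equivalent to $eAe$, the equivalence being implemented by the bimodules $A\frak l_\omega$ and $\frak l_\omega A$. Since we are assuming $n\geq r$, the composition $\omega=(1,\dots,1,0,\dots,0)\in\Lambda(n,r)$ makes sense and $\fS_\omega=\{1\}$, so $x_\omega=1$ and $x_\omega\cysHr=\cysHr$; hence \eqref{mumu} with $\mu=\omega$ gives $\frak l_\omega A\frak l_\omega\cong\cysHr$, and combined with fullness this yields the desired Morita equivalence.

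To prove fullness, first I would compute, for each $\la\in\Lambda(n,r)$, the product $\Phi_{\la,\omega}\circ\Phi_{\omega,\la}$ in $A$ using the two particular basis elements introduced just before the Proposition. Because $\fS_\omega=\{1\}$, one reads off from \eqref{particular elements} that $\msD_{\la\cap\omega}\cap\fS_\omega=\{1\}$ and $\msD_{\omega\cap\la}\cap\fS_\la=\fS_\la$, so $\Phi_{\omega,\la}\colon x_\la\cysHr\to\cysHr$ is simply the inclusion $x_\la h\mapsto x_\la h$, while $\Phi_{\la,\omega}\colon\cysHr\to x_\la\cysHr$ is left multiplication by $x_\la$. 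Their composite therefore sends $x_\la h\mapsto x_\la^2 h=P_\la(q)\,x_\la h$, that is,
\[
\Phi_{\la,\omega}\circ\Phi_{\omega,\la}=P_\la(q)\,\frak l_\la
\]
by \eqref{special elements}. Next, since $P_\la(q)$ is a factor of $P_{\fS_r}(q)$ (via $P_{\fS_r}(q)=P_\la(q)\sum_{d\in\msD_\la}q^{\ell(d)}$) and $P_{\fS_r}(q)$ is invertible in $\scR$ by hypothesis, each $P_\la(q)$ lies in $\scR^{\times}$. Thus $\frak l_\la=P_\la(q)^{-1}\Phi_{\la,\omega}\Phi_{\omega,\la}$, and as $\Phi_{\la,\omega}=\frak l_\la\Phi_{\la,\omega}\frak l_\omega$ and $\Phi_{\omega,\la}=\frak l_\omega\Phi_{\omega,\la}\frak l_\la$, this shows $\frak l_\la\in A\frak l_\omega A$ for every $\la$. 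Summing over $\la$ and invoking \eqref{111}, $1=\sum_{\la\in\Lambda(n,r)}\frak l_\la\in A\frak l_\omega A$, hence $A\frak l_\omega A=A$.

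The only step requiring genuine care is the composition identity $\Phi_{\la,\omega}\circ\Phi_{\omega,\la}=P_\la(q)\,\frak l_\la$, where one must track the double-coset bookkeeping in \eqref{particular elements} correctly (in particular the collapse of the relevant $\msD$-sets when $\fS_\omega$ is trivial) so that the two maps are exactly the inclusion and the left-multiplication described above; the remaining argument is purely formal, using only the generalities of Morita equivalence for full idempotents together with the already-established isomorphism $\frak l_\omega A\frak l_\omega\cong\cysHr$.
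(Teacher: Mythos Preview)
Your proof is correct and follows essentially the same approach as the paper: both establish the fullness condition $A\frak l_\omega A=A$ by showing $\Phi_{\la,\omega}\Phi_{\omega,\la}=P_\la(q)\,\frak l_\la$ and then using invertibility of $P_\la(q)$ (as a factor of $P_{\fS_r}(q)$) to conclude $\frak l_\la\in A\frak l_\omega A$ for every $\la$. Your write-up is slightly more explicit in unpacking $\Phi_{\omega,\la}$ as the inclusion and $\Phi_{\la,\omega}$ as left multiplication by $x_\la$, but the argument is the same.
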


\begin{proof} We apply the following general fact: for a ring $A$ together with an idempotent $e\in A$, $A$ and
 $eAe$ are Morita equivalent if and only if $AeA=A$; see \cite[Cor.~1.10]{Buch}.

Since ${\frak l}_\omega \sS_\bfu(n,r){\frak l}_\omega\cong \cysHr$, it suffices to prove that \begin{equation}\label{Mori}
\sS_\bfu(n,r){\frak l}_\omega\sS_\bfu(n,r)=\sS_\bfu(n,r).
\end{equation}
 By \eqref{111}, we only need to show ${\frak l}_\lambda=\Phi_{\la, \la}\in S_\bfu(n,r){\frak l}_\omega \sS_\bfu(n,r)$
for each $\lambda\in \Lambda(n,r)$.
 Since $\fS_\omega=\{1\}$ and $x_\omega=1$, by \eqref{particular elements}, we have
$$\begin{aligned}
 \Phi_{\la, \omega}\Phi_{\omega, \la}(x_\la)&=
\Phi_{\la, \omega}(x_\omega\sum_{w\in\fS_\la}T_w)=
\Phi_{\la, \omega}(x_\omega\cdot x_\la)\\&= x_\la\cdot x_\la
=x_\la^2=P_\la(q) x_\la=P_\la(q) \Phi_{\la,\la}(x_\la).
\end{aligned}$$
This implies that
\begin{equation}\label{poincare}
\Phi_{\la, \omega}\Phi_{\omega, \la}=P_\la(q)\Phi_{\la, \la}.
\end{equation}
 Hence, by the hypothesis on $P_\la(q)$, we obtain that
$$\frak l_\la=\Phi_{\la, \la}=P_\la(q)^{-1}\Phi_{\la, \omega}\Phi_{\omega, \la} \in \sS_\bfu(n,r){\frak l}_\omega \sS_\bfu(n,r).$$
This finishes the proof.
\end{proof}

\begin{Rems} (1) The proposition above is a (slim) cyclotomic analogue of \cite[Appendix, Lem.~1.4]{DY}
which states that if $n\geq r$ and $P_{\fS_r}(q)$ is invertible in $\scR$, then $\afsSr_{\!\scR}$ is
Morita equivalent to $\afsHr_{\!\mathscr R}$. Note that this fact is proved
through a category equivalence in \cite[Thm 4.1.3]{DDF} under the assumption that $q$ is not a root of unity.

(2) For the cyclotomic $q$-Schur algebra, this Morita equivalence does not seem to be established in the literature.
It is easy to see that the proof above does not work in this case, since, for the idempotent $\Phi_{\la,\la}$ associated
with a multipartition $\la=(\la^{(1)},\ldots,\la^{(m)})$ of $r$, formula \eqref{poincare} can not be established
whenever $|\la^{(m)}|<r$.
\end{Rems}

Let $\cysHr\text{-Mod}$ (resp., $\cysSr\text{-Mod}$) be the category of left $\cysHr$-modules (resp., $\cysSr$-modules).
By Proposition \ref{Morita-equivalence}, we immediately have the following.

\begin{Coro}Let $\scR$ be a commutative ring in which $P_{\fS_r}(q)$ is invertible and assume $n\geq r$.
Then the categories $\cysHr\text{\rm -Mod}$ and $\cysSr\text{\rm -Mod}$ are equivalent.
\end{Coro}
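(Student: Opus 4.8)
The plan is to derive this corollary directly from Proposition~\ref{Morita-equivalence}, so essentially no new work is needed. Recall that two rings $A$ and $B$ are \emph{Morita equivalent} precisely when the categories $A\text{-Mod}$ and $B\text{-Mod}$ of left modules are equivalent; this is the standard reformulation of Morita equivalence underlying the criterion \cite[Cor.~1.10]{Buch} already invoked in the proof of Proposition~\ref{Morita-equivalence}. Hence, once that proposition has established that $\sS_\bfu(n,r)$ and $\cysHr$ are Morita equivalent under the hypotheses $n\geq r$ and $P_{\fS_r}(q)$ invertible in $\scR$, the equivalence of $\cysHr\text{-Mod}$ and $\cysSr\text{-Mod}$ is immediate, since $\cysSr=\sS_\bfu(n,r)_\scR$.

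To spell out the functors explicitly, I would set $e=\frak l_\omega\in\sS_\bfu(n,r)$ and use the isomorphism $e\,\sS_\bfu(n,r)\,e\cong\cysHr$ recorded just before Proposition~\ref{Morita-equivalence} (the special case $\la=\mu=\omega$ of \eqref{mumu}). The equivalence $\sS_\bfu(n,r)\text{-Mod}\to\cysHr\text{-Mod}$ is then given by $M\mapsto eM$, with quasi-inverse $N\mapsto\sS_\bfu(n,r)e\otimes_{\cysHr}N$; these functors are mutually inverse equivalences exactly because \eqref{Mori} holds, i.e.\ $\sS_\bfu(n,r)\,e\,\sS_\bfu(n,r)=\sS_\bfu(n,r)$, which is the content of Proposition~\ref{Morita-equivalence}. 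Composing with the identification $\cysSr=\sS_\bfu(n,r)_\scR$ gives the asserted equivalence $\cysSr\text{-Mod}\simeq\cysHr\text{-Mod}$.

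There is no genuine obstacle here beyond Proposition~\ref{Morita-equivalence} itself; the only point worth highlighting is that the hypothesis that $P_{\fS_r}(q)$ be invertible is used (through \eqref{poincare} and the fact that each $P_\la(q)$ divides $P_{\fS_r}(q)$) to ensure that $e=\frak l_\omega$ is a full idempotent, which is what makes the functor $M\mapsto eM$ an equivalence rather than merely a faithful exact functor.
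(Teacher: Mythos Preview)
Your proposal is correct and matches the paper's approach: the corollary is stated as an immediate consequence of Proposition~\ref{Morita-equivalence}, and the explicit functors you write down (with $e=\frak l_\omega$) are exactly those the paper records in \eqref{f1}, noting that $e\sS\otimes_\sS M\cong eM$.
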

By a standard argument for Morita equivalence, the category
 equivalence is given by the following two functors
 \begin{equation}\label{f1}
 \aligned
\mathcal{F}&:\sH\text{-Mod}\longrightarrow \sS\text{-Mod}, \;\;\;N\longmapsto  \sS e\otimes_{\sH}N,\\
\mathcal{G} &:\sS\text{-Mod}\longrightarrow \sH\text{-Mod}, \;\;\;M\longmapsto  e\sS\otimes_{\sS }M,\endaligned
\end{equation}
where $e=\mathfrak l_\omega$, $\sS=\cysSr$ ($n\geq r$), and $\sH=\cysHr=e\sS e$.

Let $\scK$ be a field and let $\cysHr_\scK\text{-\sf mod}$ (resp., $\cysSr_\scK\text{-\sf mod}$) be
the category of finite dimensional left $\cysHr_\scK$-modules (resp., $\cysSr_\scK$-modules).
 Then the functors above induce a category equivalence between $\cysHr_\scK\text{-\sf mod}$ and $\cysSr_\scK\text{-\sf mod}$.
 In the next section, we will describe the simple objects in $\cysSr_\scK\text{-\sf mod}$.

\section{Irreducible objects in $\cysSr_\scK\text{-\sf mod}$}

Throughout this section, let $\scK$ be a field and let $0\neq q\in \scK$. We now use the Morita equivalence
in the previous section and the classification of irreducible  $\cysHr_{\!\scK}$-modules to obtain
a classification of irreducible $\cysSr_{\!\scK}$-modules.

The classification of irreducible  $\cysHr_{\!\scK}$-modules is well known. In \cite{GL, DJM}, it is done
by using a cellular basis, while more precise labelling in terms of Kleshchev multipartitions is given
in \cite{AM,Ar2001}. We now have a brief review.

Recall that a partition $\la$ of $r$ is a
sequence of non-negative integers $\la_1\geq \la_2\geq \cdots$ such that $|\la|= \sum_i\la_i=r$.
By definition, an $m$-fold multipartition of $r$ (or simply, an $m$-multipartition) is a sequence
of $m$ partitions $\la=(\la^{(1)}, \ldots, \la^{(m)})$ such that $\sum_{1\leq i\leq m}|\la^{(i)}|=r$.
We use $\mathcal{P}_m(r)$ to denote the set of $m$-multipartitions of $r$.

The dominance order on $\mathcal{P}_m(r)$ is a partial order $\unrhd$ defined by setting,
for $\la,\mu\in \mathcal{P}_m(r)$, $\la\unrhd\mu$ if, for all $j, k$,
$$\sum_{1\leq l\leq k-1}|\la^{(l)}|+\sum_{1\leq i\leq j}\la^{(k)}_i\geq \sum_{1\leq l\leq k-1}|\mu^{(l)}|+\sum_{1\leq i\leq j}\mu^{(k)}_i.$$

To each  $\la\in \mathcal{P}_m(r)$ we can associate a left $\cysHr_\scK$-module $S^\la= \cysHr_\scK \mathtt z_\la$,
where $\mathtt z_\la=\tau(z_\la)\in\cysHr_\scK$ with $\tau, z_\la$ defined in \eqref{tau} and \cite[(2.1),(2.9)]{DR3}
(cf. \cite[Def. 3.28]{DJM}).  These modules are called Specht modules. Each Specht
module is naturally equipped with a bilinear form. Set $D^\la=S^\la/\text{rad}S^\la$, where
rad$S^\la$ is the radical of the bilinear form. By the cellular algebra theory, those nonzero $D^\la$ form a complete set of irreducible $\cysHr_\scK$-modules.

By determining Kashiwara's crystal graphs, Ariki and Mathas gave in \cite{AM} another classification of
irreducible $\cysHr_\scK$-modules in terms of  Kleshchev multipartitions. Let $\mathcal{KP}_m(r)\subseteq \mathcal{P}_m(r)$
denote the subset of Kleshchev multipartitions\footnote{See \cite{AM} for the definition. If $q$ is not a root of unity, and the parameters $u_1,\ldots,u_m$ are powers of $q$, see, e.g., \cite[3.5.4]{DW1} for a combinatorial definition.} of $r$. We now summarise these results in  the following theorem.

\begin {Thm}\label{s1}
Suppose that $ \scK$ is a field with $q,u_1, \ldots, u_m$ all nonzero.
\begin{itemize}
\item[(1)]{\rm(}\cite[Thm 3.30]{DJM}{\rm)}
 The set $\{D^\la\;|\; \la \in \mathcal{P}_m(r),D^\la\neq0 \}$
forms a complete set of non-isomorphic irreducible left $\cysHr_\scK$-modules. Further,
these modules are absolutely irreducible.

\item[(2)]{\rm(}\cite[Thm C]{AM}, \cite{Ar2001}{\rm)} $D^\la\neq 0$ if and only if $\la\in \mathcal{KP}_m(r)$.

\item[(3)] Let $\la, \mu$ be $m$-multipartitions of $r$ with $\mu\in \mathcal{KP}_m(r)$. If $[S^\la: D^\mu]\neq 0$, then $\la\unrhd \mu$. In particular, $[S^\la: D^\la]=1$ for all $\la\in \mathcal{KP}_m(r)$.
\end{itemize}
\end{Thm}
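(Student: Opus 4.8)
The plan is to recognise that Theorem \ref{s1} is essentially a repackaging of known results: parts (1) and (2) are quoted directly from \cite{DJM} and from \cite{AM, Ar2001}, while part (3) is the elementary homological consequence of a cellular structure. So the proof amounts to fixing the relevant cellular datum and then citing. First I would record that, by \cite{DJM} (see also \cite{GL}), the Ariki--Koike algebra $\cysHr_\scK$ is a cellular algebra in the sense of Graham--Lehrer with poset $(\mathcal{P}_m(r),\unrhd)$, whose cell modules, after applying the anti-automorphism $\tau$ of \eqref{tau}, are precisely the Specht modules $S^\la=\cysHr_\scK\mathtt z_\la$ considered here, and whose cell bilinear form is the one used to define $D^\la=S^\la/\rad S^\la$. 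This identification is the only point where a little care is needed, since \cite{DJM} often phrase matters in terms of the cyclotomic $q$-Schur algebra and right modules; the passage to left $\cysHr_\scK$-modules is via $\tau$, which fixes each generator.

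Granting this setup, part (1) is the Graham--Lehrer classification of irreducibles of a cellular algebra: each $D^\la$ is either zero or absolutely irreducible, and the nonzero ones form a complete set of pairwise non-isomorphic simple $\cysHr_\scK$-modules; absolute irreducibility is explicitly part of \cite[Thm 3.30]{DJM} (it can also be seen from the fact that the cell form is defined over the prime subring, so that $\End(D^\la)=\scK$). Part (2) would be quoted verbatim: via Kashiwara's crystal graph on the Fock space, Ariki--Mathas \cite[Thm C]{AM} identify exactly which $\la$ give $D^\la\neq 0$, namely the Kleshchev multipartitions $\mathcal{KP}_m(r)$, the remaining cases being settled in \cite{Ar2001}.

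For part (3) I would invoke the standard upper-triangularity of the decomposition matrix of a cellular algebra, \cite[(3.6)]{GL} (equivalently \cite{DJM}): whenever $D^\mu\neq 0$ one has $[S^\mu:D^\mu]=1$, and $[S^\la:D^\mu]\neq 0$ forces $\la$ to dominate $\mu$ in the cell order, which here is $\unrhd$. Since, by part (2), the condition $\mu\in\mathcal{KP}_m(r)$ is exactly $D^\mu\neq 0$, both assertions of (3) follow at once, the ``in particular'' clause being the case $\la=\mu$.

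The hard part will not be any real mathematical depth but the bookkeeping: one must pin down that the cell poset in \cite{DJM} is the dominance order $\unrhd$ as defined in this section (and not its opposite or a refinement), that the modules $S^\la=\cysHr_\scK\mathtt z_\la$ used here really coincide with the $\tau$-twist of the Graham--Lehrer cell modules, and that the bilinear form passing from $S^\la$ to $D^\la$ is the cell form. Once these matches are verified, (1)--(3) are immediate consequences of results already in the literature, and no further argument is required.
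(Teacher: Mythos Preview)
Your proposal is correct and matches the paper's treatment: Theorem~\ref{s1} is stated there as a summary of known results with inline citations, and no proof is given. Your explanation of part~(3) via Graham--Lehrer's upper-triangularity \cite[(3.6)]{GL} is in fact more detailed than the paper, which simply records the statement without further comment.
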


 The map $\tau$ defined in \eqref{tau} can be extended to an anti-automorphism of  $\cysSr$ (see \cite{DJM}).
We may turn a module over these algebras to its opposite side module. For example, we may twist the $(\cysSr,\cysHr)$-bimodule structure on $\sT_\bfu(n,r)$ into the $(\cysHr,\cysSr)$-bimodule $\sT_\bfu(n,r)^\tau$ with the action $h*x*s=\tau(s)x\tau(h)$ for all $h\in \cysHr, x\in \sT_\bfu(n,r),s\in\cysSr.$ Note also that there is an obvious left $\cysHr$-module isomorphism
\begin{equation}\label{Ttau}
\sT_\bfu(n,r)^\tau\cong\bigoplus_{\la\in\Lambda(n,r)}\cysHr x_\la.
\end{equation}

For $\mu\in \mathcal{KP}_m(r)$,
let $$L(\mu)=\mathcal{T}_\bfm(n,r)\otimes_{\cysHr_\scK} D^\mu.$$
By Proposition \ref{Morita-equivalence} and Theorem \ref{s1}, we have the following Corollary.

\begin{Coro}\label{n>r}Suppose $P_{\fS_r}(q)\neq0$ in $\scK$.
If $n\geq r$, then $\{L(\mu)\;|\;\mu\in \mathcal{KP}_m(r)\}$
is a complete set of non-isomorphic irreducible $\cysSr$-modules.
Moreover, we have an $\cysSr_\scK$-module isomorphism
$$L(\mu)\cong \Hom_{\cysHr_\scK}(\mathfrak l_\omega\cysSr_\scK, D^\mu)$$
\end{Coro}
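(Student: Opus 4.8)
The plan is to combine the Morita equivalence of Proposition \ref{Morita-equivalence} with the classification of irreducible $\cysHr_\scK$-modules in Theorem \ref{s1}. Since $P_{\fS_r}(q)\neq 0$ in $\scK$ and $n\geq r$, Proposition \ref{Morita-equivalence} gives that $\sS:=\cysSr_\scK$ and $\sH:=\cysHr_\scK$ are Morita equivalent, with the equivalence realised by the functor $\mathcal F = \sS e\otimes_{\sH}-$ of \eqref{f1}, where $e=\frak l_\omega$ and $\sH\cong e\sS e$. A Morita equivalence induces a bijection on isomorphism classes of simple modules, sending a simple $\sH$-module $D$ to the simple $\sS$-module $\sS e\otimes_\sH D$. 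By Theorem \ref{s1}(1)--(2), $\{D^\mu\mid \mu\in\mathcal{KP}_m(r)\}$ is a complete irredundant list of simple $\sH$-modules, so $\{\sS e\otimes_\sH D^\mu\mid \mu\in\mathcal{KP}_m(r)\}$ is a complete irredundant list of simple $\sS$-modules.

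Next I would identify $\sS e\otimes_\sH D^\mu$ with $L(\mu)=\mathcal T_\bfu(n,r)\otimes_\sH D^\mu$. Here the key point is that, as a left $\sS$-module, $\sS e = \sS\frak l_\omega$ is isomorphic to $\mathcal T_\bfu(n,r)=\bigoplus_{\la\in\La(n,r)}x_\la\sH$: indeed $\frak l_\omega$ is the idempotent projecting onto the summand $x_\omega\sH = \sH$ (as $\fS_\omega=\{1\}$, $x_\omega=1$), and for any right $\cysHr$-module map we have $\sS\frak l_\omega\cong\Hom_\sH(\mathcal T_\bfu(n,r)^*,\,\cdot\,)$-type identifications; more concretely, $\sS\frak l_\omega=\End_\sH(\mathcal T_\bfu(n,r))\frak l_\omega$ is, via evaluation at $x_\omega=1$, isomorphic to $\mathcal T_\bfu(n,r)$ as a left $\sS$-right $\sH$-bimodule, because $\frak l_\omega\sS\frak l_\omega\cong\sH$ and $\sS\frak l_\omega$ is the ``first column'' $\bigoplus_\la\Hom_\sH(\sH,x_\la\sH)\cong\bigoplus_\la x_\la\sH$. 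Under this bimodule isomorphism $\sS e\otimes_\sH D^\mu\cong\mathcal T_\bfu(n,r)\otimes_\sH D^\mu=L(\mu)$, which gives the first assertion.

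For the final isomorphism $L(\mu)\cong\Hom_{\sH}(\frak l_\omega\sS,\,D^\mu)$, I would use the other functor in \eqref{f1}, or rather the standard fact that under a Morita equivalence via the bimodule ${}_\sS(\sS e)_\sH$ the simple module $\mathcal F(D)=\sS e\otimes_\sH D$ is also isomorphic to $\Hom_\sH({}_\sH(e\sS)_\sS,\,D)$; this follows because $e\sS$ is the $\sH$-$\sS$-bimodule dual / progenerator on the other side, and $\Hom_\sH(e\sS,D)$ carries a left $\sS$-action by precomposition. One then checks $\sS e\otimes_\sH D\cong\Hom_\sH(e\sS,D)$ as left $\sS$-modules for any $\sH$-module $D$ when $e\sS$ is finitely generated projective as a right $\sS$-module and $\sS e$ as a left $\sS$-module (both hold here since $e$ is an idempotent and $\sS$ is a finite-dimensional algebra), via the natural map. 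Substituting $e=\frak l_\omega$ and $D=D^\mu$ yields $L(\mu)\cong\Hom_{\cysHr_\scK}(\frak l_\omega\cysSr_\scK,D^\mu)$.

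The main obstacle I anticipate is not the Morita formalism but pinning down the bimodule isomorphism $\sS\frak l_\omega\cong\mathcal T_\bfu(n,r)$ carefully enough that the $\sH$-action on the right (needed to form the tensor product over $\sH$) matches on both sides; one must use $\frak l_\omega\sS\frak l_\omega\cong\cysHr$ from \eqref{mumu} together with the description of $\sS\frak l_\omega$ as $\bigoplus_{\la}\Hom_{\cysHr_\scK}(x_\omega\cysHr_\scK,\,x_\la\cysHr_\scK)\cong\bigoplus_\la x_\la\cysHr_\scK$ from Lemma \ref{permutation mod}, and verify the right $\cysHr_\scK\cong\frak l_\omega\sS\frak l_\omega$-actions agree. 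Everything else is a routine application of elementary Morita theory combined with Theorem \ref{s1}.
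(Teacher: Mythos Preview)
Your approach is correct and essentially the same as the paper's: both derive the first assertion from Proposition~\ref{Morita-equivalence} and Theorem~\ref{s1} via the bimodule identification $\sS\frak l_\omega\cong\sT_\bfu(n,r)$ (which the paper leaves implicit), and for the second assertion the paper's chain
\[
L(\mu)\cong\Hom_\sS(\sS,L(\mu))\cong\Hom_\sS(\sS e\otimes_{e\sS e}e\sS,\,L(\mu))\cong\Hom_{e\sS e}(e\sS,\Hom_\sS(\sS e,L(\mu)))\cong\Hom_{e\sS e}(e\sS,D^\mu)
\]
(using the multiplication isomorphism $\sS e\otimes_{e\sS e}e\sS\cong\sS$ and $\Hom_\sS(\sS e,L(\mu))\cong eL(\mu)\cong D^\mu$) is precisely a derivation of your claimed natural isomorphism $\sS e\otimes_\sH D\cong\Hom_\sH(e\sS,D)$.

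One small correction: the hypothesis you invoke for that last isomorphism---projectivity of $e\sS$ as a right $\sS$-module and of $\sS e$ as a left $\sS$-module---is automatic for \emph{any} idempotent $e$ and does not by itself give the isomorphism. What is actually needed is that $e\sS$ be finitely generated projective as a left $e\sS e$-module (equivalently, the Morita condition $\sS e\sS=\sS$), which you already have from Proposition~\ref{Morita-equivalence}.
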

\begin{proof}We only need to prove the second assertion. Let $\sS=\cysSr_\scK$. Then, for $e=\mathfrak l_\omega$,
$e\sS e\cong\cysHr_\scK$. Since $\sS$ is Morita equivalent to $e\sS e$, the multiplication map $\sS e\otimes_{e\sS e}e\sS\to\sS$
is an $(\sS,\sS)$-bimodule isomorphism (see, e.~g., \cite[Prop.~1.9(2)]{Buch}). Then the following (left)
$\sS$-module\footnote{See, e.g., \cite[II,Prop.~3.5]{Jac} for various left module structures on the Hom-space.} isomorphisms hold:
\begin{equation}\label{Nn}
\aligned
L(\mu)&\cong\Hom_\sS(\sS,L(\mu))\cong\Hom_\sS(\sS e\otimes_{e\sS e}e\sS,L(\mu))\\
&\cong \Hom_{e\sS e}(e\sS,\Hom_\sS(\sS e, L(\mu))\cong \Hom_{e\sS e}(e\sS,D^\mu),
\endaligned
\end{equation}
since $\Hom_\sS(\sS e, L(\mu))\cong eL(\mu)\cong D^\mu$.
\end{proof}

Recall from \cite{DJM} for the notion of semi-standard $\la$-tableaux of type $\mu$.

\begin{Lem}\label{tableaux}
For an $m$-multipartition $\la=(\la^{(1)},\ldots,\la^{(m)})$ and a composition $\mu\in\La(n,r)$, if the set
$\sT^{ss}(\la,{}^\circ\!\mu)$ of semistandard $\la$-tableaux of type ${}^\circ\mu:=((0),\ldots,(0),\mu)$
is not empty, then $\la^{(1)}+\cdots+\la^{(m)}\unrhd\mu$.
\end{Lem}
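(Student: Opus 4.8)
\textbf{Proof proposal for Lemma \ref{tableaux}.}

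The plan is to unwind the definition of a semistandard $\la$-tableau of type ${}^\circ\!\mu=((0),\ldots,(0),\mu)$ and extract from its defining inequalities the dominance relation $\la^{(1)}+\cdots+\la^{(m)}\unrhd\mu$. First I would fix a semistandard tableau $\mathsf T\in\sT^{ss}(\la,{}^\circ\!\mu)$; since the type has all components equal to $(0)$ except the last, every entry of $\mathsf T$ is a pair $(a,m)$ with $a\in\{1,\ldots,n\}$, i.e. all entries carry the top component index $m$. The semistandardness conditions then say: entries are weakly increasing along rows of each component $\la^{(l)}$, strictly increasing down columns of each $\la^{(l)}$, and the total number of entries equal to $(a,m)$ over all components is $\mu_a$. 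I would record that, because within a single column the second coordinates are all $m$, strict increase down a column forces the first coordinates to be strictly increasing down that column; hence in each component $\la^{(l)}$ the entries $\le(a,m)$ occupy at most the first $a$ rows.

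Next I would count. Fix $a\in\{1,\ldots,n\}$ and let $b=\mu_1+\cdots+\mu_a$ be the number of boxes of $\mathsf T$ (over all components) filled with an entry whose first coordinate is $\le a$. By the observation above, in component $\la^{(l)}$ these boxes lie in rows $1,\ldots,a$, so their number is at most $\la^{(l)}_1+\cdots+\la^{(l)}_a$ (with the convention $\la^{(l)}_i=0$ for $i$ exceeding the number of parts). Summing over $l=1,\ldots,m$ gives
\[
\mu_1+\cdots+\mu_a\;\le\;\sum_{l=1}^m\big(\la^{(l)}_1+\cdots+\la^{(l)}_a\big)\;=\;\sum_{i=1}^a\Big(\sum_{l=1}^m\la^{(l)}_i\Big),
\]
which is exactly the statement that $\la^{(1)}+\cdots+\la^{(m)}$ (the composition whose $i$-th part is $\sum_l\la^{(l)}_i$) dominates $\mu$, since this holds for every $a$ and both sides have total $r$. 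Rearranging the parts of $\la^{(1)}+\cdots+\la^{(m)}$ into weakly decreasing order only increases its partial sums, so the dominance $\unrhd$ in the sense used in Section 8 is preserved.

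The main obstacle is purely bookkeeping: making the "entries $\le(a,m)$ occupy the first $a$ rows" argument airtight. The subtlety is that the ordering on the entry set $\{1,\ldots,n\}\times\{1,\ldots,m\}$ used in the definition of semistandardness must be spelled out — one orders first by the partition-component coordinate and then by $\{1,\ldots,n\}$, or vice versa — and one must check that with the type ${}^\circ\!\mu$ all entries genuinely have second coordinate $m$, so that the column-strictness collapses to strict increase of the first coordinate. Once that is pinned down (via the definition of $\sT^{ss}$ recalled from \cite{DJM}), the counting is immediate and the lemma follows.
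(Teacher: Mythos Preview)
Your argument is correct and rests on the same combinatorial fact as the paper's proof: in a semistandard tableau all entries with first coordinate at most $a$ are confined to the first $a$ rows, so counting gives the dominance inequality. The paper organises this slightly differently: rather than counting globally across all components at once, it observes that each component $S^{(i)}$ of a semistandard $\la$-tableau of type ${}^\circ\!\mu$ is itself an ordinary semistandard tableau of shape $\la^{(i)}$ and some type $\mu^{(i)}$, invokes the classical single-partition fact $\la^{(i)}\unrhd\mu^{(i)}$, and then sums over $i$ using $\mu^{(1)}+\cdots+\mu^{(m)}=\mu$. Your version unfolds that classical fact inline and does the count in one pass; the paper's version is a couple of lines shorter by citing the single-partition case as known, but yours is self-contained. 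Either way the content is the same.
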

\begin{proof}
If $S=(S^{(1)},\ldots,S^{(m)})\in\sT^{ss}(\la,{}^\circ\!\mu)$, then each $S^{(i)}$ is a semistandard
tableau of type, say $\mu^{(i)}$. Thus, $\la^{(i)}\unrhd\mu^{(i)}$ for all $i$. Since
$\mu^{(1)}+\cdots+\mu^{(m)}=\mu$, it follows that $\la^{(1)}+\cdots+\la^{(m)}\unrhd\mu$.
\end{proof}

 For a partition $\la$, we use $l(\la)$ to denote the length of $\la$. Set
$$\mathcal{KP}_m(n,r)=\{(\la^{(1)}, \ldots, \la^{(m)})\in \mathcal{KP}_m(r)\;|\; l(\la^{(i)})\leq n, 1\leq i\leq m\}.$$

\begin{Lem}\label{parameter}
Assume now that $q\in\scK$ is not a root of unity.
Suppose that $n<r$ and $\mu\in \mathcal{KP}_m(r)$. Then $\mathcal{T}_\bfm(n,r)\otimes_{\cysHr_\scK} D^\mu\neq 0$
if and only if $\mu\in \mathcal{KP}_m(n,r)$.
\end{Lem}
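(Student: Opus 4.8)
The plan is to rewrite the question as a combinatorial statement about the restriction of $D^\mu$ to the Hecke subalgebra $\sH(r)_\scK\subseteq\cysHr_\scK$ generated by $T_1,\dots,T_{r-1}$. Since $q$ is not a root of unity, $P_{\fS_r}(q)\ne 0$ in $\scK$, and as $P_\la(q)$ divides $P_{\fS_r}(q)$ for every $\la\in\La(n,r)$, each $\frak e_\la:=P_\la(q)^{-1}x_\la$ is an idempotent, so $x_\la\cysHr_\scK=\frak e_\la\cysHr_\scK$ is a projective right module and $x_\la\cysHr_\scK\otimes_{\cysHr_\scK}D^\mu\cong x_\la D^\mu$. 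Hence $\mathcal{T}_\bfm(n,r)\otimes_{\cysHr_\scK}D^\mu=\bigoplus_{\la\in\La(n,r)}x_\la D^\mu$, which --- since $x_\la\in\sH(r)_\scK$ --- is the value on $\operatorname{Res}_{\sH(r)_\scK}D^\mu$ of the functor $M\mapsto\bigoplus_{\la}x_\la M$ into modules for the ordinary $q$-Schur algebra $\sS_q(n,r)$ of Proposition \ref{q-Sch}. Because $q$ is not a root of unity, $\sH(r)_\scK$ is semisimple, so $\operatorname{Res}_{\sH(r)_\scK}D^\mu\cong\bigoplus_\nu(\mathsf S^\nu)^{\oplus c_{\mu\nu}}$ over partitions $\nu$ of $r$, where $\mathsf S^\nu$ is the Specht $\sH(r)_\scK$-module and $c_{\mu\nu}=\dim\Hom_{\sH(r)_\scK}(\mathsf S^\nu,D^\mu)$; and the functor above carries $\mathsf S^\nu$ to the simple $\sS_q(n,r)$-module $L_q(\nu)$ if $l(\nu)\le n$ and to $0$ otherwise (a standard property of $q$-Schur algebras; here $\sS_q(n,r)$ is semisimple and $L_q(\nu)$ is its Weyl module). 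Thus $L(\mu)\cong\bigoplus_{l(\nu)\le n}L_q(\nu)^{\oplus c_{\mu\nu}}$, so $L(\mu)\ne 0$ exactly when $c_{\mu\nu}\ne 0$ for some partition $\nu$ of $r$ with $l(\nu)\le n$.

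For the ``only if'' direction I would invoke the branching rule for Ariki--Koike algebras: as an $\sH(r)_\scK$-module, $S^\mu\cong\operatorname{Ind}_{\sH(\boldsymbol{\rho})_\scK}^{\sH(r)_\scK}\big(\mathsf S^{\mu^{(1)}}\boxtimes\cdots\boxtimes\mathsf S^{\mu^{(m)}}\big)$, where $\boldsymbol{\rho}=(|\mu^{(1)}|,\dots,|\mu^{(m)}|)$ and $\sH(\boldsymbol{\rho})_\scK$ is the corresponding parabolic subalgebra; equivalently $\operatorname{Res}_{\sH(r)_\scK}S^\mu$ has character $s_{\mu^{(1)}}\cdots s_{\mu^{(m)}}=\sum_\nu c^{\,\nu}_{\mu^{(1)},\dots,\mu^{(m)}}s_\nu$ with Littlewood--Richardson coefficients. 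Since $D^\mu$ is a quotient of $S^\mu$, every $\nu$ with $c_{\mu\nu}\ne 0$ already has $c^{\,\nu}_{\mu^{(1)},\dots,\mu^{(m)}}\ne 0$, which forces $\mu^{(i)}\subseteq\nu$ as Young diagrams for all $i$, hence $l(\nu)\ge\max_i l(\mu^{(i)})$. So if $l(\mu^{(i_0)})>n$ for some $i_0$, no admissible $\nu$ exists and $L(\mu)=0$; this gives $L(\mu)\ne 0\Rightarrow\mu\in\mathcal{KP}_m(n,r)$.

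For the ``if'' direction, assume $\mu\in\mathcal{KP}_m(n,r)$; then $D^\mu\ne 0$ by Theorem \ref{s1}(2) and $l(\mu^{(i)})\le n$ for all $i$, and it suffices to find one partition $\nu$ with $l(\nu)\le n$ and $c_{\mu\nu}\ne 0$. I would take $\nu_0$ with $i$-th part $\sum_k\mu^{(k)}_i$: then $l(\nu_0)=\max_i l(\mu^{(i)})\le n$, and $\nu_0$ is the unique $\unrhd$-maximal partition occurring in $s_{\mu^{(1)}}\cdots s_{\mu^{(m)}}$, with coefficient one. It remains to show this top constituent is not lost in the head, i.e.\ $c_{\mu\nu_0}=[\operatorname{Res}_{\sH(r)_\scK}D^\mu:\mathsf S^{\nu_0}]\ne 0$. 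Here I would use that $\sH(\boldsymbol{\rho})_\scK$ is semisimple (again as $q$ is not a root of unity) and that the ``top layer'' $\mathsf S^{\mu^{(1)}}\boxtimes\cdots\boxtimes\mathsf S^{\mu^{(m)}}$ of $\operatorname{Res}_{\sH(\boldsymbol{\rho})_\scK}S^\mu$ is not contained in $\operatorname{rad}S^\mu$ --- the restriction to it of the DJM contravariant form being nondegenerate --- so that $\operatorname{Res}_{\sH(\boldsymbol{\rho})_\scK}D^\mu$ still contains $\mathsf S^{\mu^{(1)}}\boxtimes\cdots\boxtimes\mathsf S^{\mu^{(m)}}$; Frobenius reciprocity then yields $\Hom_{\sH(r)_\scK}\big(\mathsf S^{\nu_0},\operatorname{Res}_{\sH(r)_\scK}D^\mu\big)\ne 0$, since $\mathsf S^{\nu_0}$ is a summand of $\operatorname{Ind}_{\sH(\boldsymbol{\rho})_\scK}^{\sH(r)_\scK}(\mathsf S^{\mu^{(1)}}\boxtimes\cdots\boxtimes\mathsf S^{\mu^{(m)}})$. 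Hence $c_{\mu\nu_0}\ne 0$ and $L(\mu)\ne 0$.

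The reduction and the ``only if'' direction are essentially formal, depending only on the branching rule and on properties of $\sS_q(n,r)$ already in use in the paper. I expect the genuine obstacle to be the survival claim in the ``if'' direction --- that $\mathsf S^{\nu_0}$, the $\unrhd$-maximal symmetric-group constituent of $\operatorname{Res}_{\sH(r)_\scK}S^\mu$, does not get absorbed into $\operatorname{rad}S^\mu$ --- and this is precisely where the hypothesis ``$q$ not a root of unity'' is indispensable, since at a root of unity $\sH(r)_\scK$ and its parabolics cease to be semisimple and the statement can fail. The cleanest way through should be to use the compatibility of the DJM cellular (contravariant) form on $S^\mu$ with parabolic induction/restriction together with the semisimplicity of $\sH(\boldsymbol{\rho})_\scK$; pinning this down (or locating it in the Ariki--Mathas literature) is the one non-formal step.
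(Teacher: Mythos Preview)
Your reduction of the nonvanishing question to the $\sH(r)_\scK$-constituents of $D^\mu$ is correct and clean, and your ``only if'' argument via Littlewood--Richardson works: the character identity $[\operatorname{Res}_{\sH(r)}S^\mu:\mathsf S^\nu]=c^{\nu}_{\mu^{(1)},\dots,\mu^{(m)}}$ holds (by specialisation from the generic semisimple case, which suffices since $\sH(r)_\scK$ is semisimple), and $c^\nu\neq 0\Rightarrow\mu^{(i)}\subseteq\nu\Rightarrow l(\nu)\ge\max_i l(\mu^{(i)})$ finishes it. This is a genuinely different route from the paper's, which for ``only if'' uses the Specht filtration of $\cysHr_\scK x_\la$ from \cite[Cor.~4.15]{DJM} together with Lemma~\ref{tableaux} and Theorem~\ref{s1}(3). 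Your argument here is arguably more direct.

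The gap is exactly where you flag it: the ``if'' direction. The paper makes no attempt to show that a specific $\sH(r)$-constituent survives in $D^\mu$; it bypasses the issue entirely by inflating $D^\mu$ to a simple $\afsHr_\scK$-module via~\eqref{epimor-af-AK-alg}, identifying it with $V_{\bfs}$ for the multisegment $\bfs=\bfs^{\sfc}_{\mu,\bfu}$ of column residual segments of $\mu$ (\cite[Lem.~4.1.1]{DW1}), and then invoking \cite[Thm~6.6]{DD}: $\sT_\vtg(n,r)\otimes_{\afsHr_\scK}V_{\bfs}$ is simple (hence nonzero) exactly when every segment has length $\le n$, which for column segments reads $\max_i l(\mu^{(i)})\le n$. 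Your proposed survival argument is not complete as stated: the $\boxtimes_i\mathsf S^{\mu^{(i)}}$-isotypic component of $\operatorname{Res}_{\sH(\boldsymbol\rho)}S^\mu$ has multiplicity $\sum_\nu(c^{\nu}_{\mu})^2$, which can exceed $1$; orthogonality of isotypic components under the contravariant form only says the radical decomposes along isotypic pieces, not that the restriction to any particular piece is nondegenerate; and there is no evident reason the image of the cyclic generator $\mathtt z_\mu$ in $D^\mu$ must meet the $\mathsf S^{\nu_0}$-component. Absent the affine/multisegment input or an independent argument for survival, this half of your proof remains incomplete.
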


\begin{proof} Assume $N\geq r>n$ and consider the idempotent
$f=\sum_{\la\in\Lambda(n,r)}\mathfrak l_{\widetilde\lambda}\in \sS=\sS_\bfu(N,r)_\scK$,
where $\widetilde\lambda=(\la_1, \ldots, \la_n, 0, \ldots, 0)\in\Lambda(N,r)$.
Then $\cysSr_\scK\cong f\sS f$. By \eqref{Nn}, we have
\begin{equation}\label{d1}
L(\mu)\cong fL(\widetilde\mu)\cong \Hom_{e\sS e}(e\sS f,D^\mu)\cong\Hom_{\cysHr_\scK}(\sT_\bfu(n,r)^\tau,D^\mu).\end{equation}
Here we have used the $(\cysHr_{\!\scK},\cysSr_{\!\scK})$-bimodule isomorphism $e\sS f\cong \sT_\bfu(n,r)^\tau$.
By \cite[Cor. 4.15]{DJM}, for each $\la\in \Lambda(n,r)$, there is a Specht module filtration of $\cysHr_\scK x_\la$
such that $[\cysHr_\scK x_\la:S^\nu]$ equals the number of semistandard $\nu$-tableaux of type $^\circ\la$, where $\nu\in \mathcal{P}_m(r)$.
By Lemma \ref{tableaux}, if $S^\nu$ occurs in the filtration of $\cysHr_\scK x_\la$, then $\sum_{1\leq i\leq m}\nu^{(i)}\unrhd\la$
 and, consequently, $\nu\in \mathcal{KP}_m(n,r)$.

Thus, if $\mathcal{T}_\bfm(n,r)\otimes_{\cysHr_\scK} D^\mu\neq 0$, by $\eqref{d1}$ and \eqref{Ttau},
$D^\mu$ is a composition factor of $S^\nu$ for some $\nu\in \mathcal{KP}_m(n,r)$. Then,
by Theorem \ref{s1}(3), $\nu\unrhd \mu$. Since $\nu\in \mathcal{KP}_m(n,r)$, we have $\mu\in \mathcal{KP}_m(n,r)$.

Conversely, suppose that $\mu=(\mu^{(1)}, \ldots, \mu^{(m)})\in \mathcal{KP}_m(n,r)$. Then
the irreducible $\cysHr_{\!\scK}$-module $D^\mu=\text{hd}(S^\mu)$ by Theorem \ref{s1}.
Inflate $D^\mu$ to an irreducible $\afsHr_\scK$-module via \eqref{epimor-af-AK-alg} and, by \cite[Lem.~4.1.1]{DW1},
there exists a multisegment $\bfs=\bfs^\sfc_{\mu,\bfu}=(\sfs_1,\ldots,\sfs_t)$ consisting of column
residual segments of $\mu$ such that the simple $\afsHr_\scK$-module $V_\bfs$ associated with $\bfs$
is isomorphic to $D^\mu$. Since $\mu \in \mathcal{KP}_m(n,r)$, each of the $\sfs_i$'s has length at most $n$.
Now, by \cite[Thm 6.6]{DD}, we obtain a simple $\afsSr_\scK$-module $\sT_\vtg(n,r)\otimes_{\afsHr_\scK} V_\bfs$. Hence,
$$\aligned
0&\neq\sT_\vtg(n,r)\otimes_{\afsHr_\scK} V_\bfs\cong\sT_\vtg(n,r)\otimes_{\afsHr_{\!\scK}} \cysHr_{\!\scK}\otimes_{\cysHr_{\!\scK}}V_\bfs\\
&\cong\sT_\bfu(n,r)\otimes_{\cysHr_{\!\scK}}D^\mu,\endaligned$$
as desired.
\end{proof}

Now we are ready to prove the main result of this section.

\begin{Thm}\label{simple module}
Assume $q\in\scK$ is not a root of unity.
For arbitrary positive integers $n,r$, the following set
$$\{\mathcal{T}_\bfm(n,r)\otimes_{\cysHr_\scK} D^\mu\;|\;\mu\in \mathcal{KP}_m(n,r)\}$$
forms a complete set of non-isomorphic irreducible $\cysSr_{\!\scK}$-modules.
\end{Thm}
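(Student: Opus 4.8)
The plan is to combine the two regimes $n\geq r$ and $n<r$ into a single statement, using the Morita equivalence from Section 7 for the ``stable'' range and the reduction-to-larger-$n$ trick of Lemma \ref{parameter} for the remaining range. First I would record that when $n\geq r$, we have $\mathcal{KP}_m(n,r)=\mathcal{KP}_m(r)$, since every partition occurring in an $m$-multipartition of $r$ automatically has length at most $r\leq n$; hence Corollary \ref{n>r} already gives the theorem in this case, and note that $P_{\fS_r}(q)\neq 0$ is automatic because $q$ is not a root of unity. So the content is the case $n<r$.

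For $n<r$, fix $N\geq r$ and consider $\sS=\sS_\bfu(N,r)_\scK$ together with the idempotent $f=\sum_{\la\in\Lambda(n,r)}\mathfrak l_{\widetilde\la}\in\sS$ as in the proof of Lemma \ref{parameter}, so that $\cysSr_\scK\cong f\sS f$ and $e\sS f\cong\sT_\bfu(n,r)^\tau$ as $(\cysHr_\scK,\cysSr_\scK)$-bimodules, where $e=\mathfrak l_\omega$. The key point is the exactness/faithfulness package for the idempotent $f$: because $\sS$ is Morita equivalent to $\cysHr_\scK=e\sS e$ (Proposition \ref{Morita-equivalence}, applicable since $N\geq r$ and $P_{\fS_r}(q)\neq0$), the simple $\sS$-modules are exactly the $L(\widetilde\mu)$ for $\mu\in\mathcal{KP}_m(r)$, and the functor $M\mapsto fM$ sends simples either to $0$ or to simple $f\sS f$-modules, with every simple $f\sS f$-module so obtained. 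Thus $\{fL(\widetilde\mu)\mid \mu\in\mathcal{KP}_m(r),\ fL(\widetilde\mu)\neq0\}$ is a complete irredundant list of simple $\cysSr_\scK$-modules. By \eqref{d1}, $fL(\widetilde\mu)\cong\sT_\bfu(n,r)\otimes_{\cysHr_\scK}D^\mu$, so I must only identify precisely which of these are nonzero — and Lemma \ref{parameter} says this happens if and only if $\mu\in\mathcal{KP}_m(n,r)$.

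Assembling: the nonzero members of the list are indexed bijectively by $\mathcal{KP}_m(n,r)$, each given by $\mathcal{T}_\bfm(n,r)\otimes_{\cysHr_\scK}D^\mu$, they are pairwise non-isomorphic (distinct $\mu$ give non-isomorphic $L(\widetilde\mu)$, and $f\cdot{-}$ is injective on isomorphism classes of the simples it does not kill, by the standard idempotent-truncation dictionary), and every simple $\cysSr_\scK$-module occurs since every simple $f\sS f$-module is of the form $fS$ for some simple $\sS$-module $S$. This yields the theorem for all $n,r$.

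The main obstacle — really the only nontrivial input — is the ``only if'' direction of Lemma \ref{parameter}, i.e.\ that nonvanishing of $\sT_\bfu(n,r)\otimes_{\cysHr_\scK}D^\mu$ forces $l(\mu^{(i)})\leq n$ for all $i$; but that is already proved in the excerpt via the Specht filtration of $\cysHr_\scK x_\la$ from \cite[Cor.~4.15]{DJM}, Lemma \ref{tableaux}, and Theorem \ref{s1}(3). A secondary bookkeeping point, which I would address explicitly rather than gloss over, is that the Morita/idempotent arguments require $P_{\fS_r}(q)\neq0$, and one should remark that this is guaranteed by the hypothesis that $q$ is not a root of unity (each $P_\la(q)$ divides $P_{\fS_r}(q)$, which is a product of factors $1+q+\cdots+q^{j-1}$). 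With that in hand the proof is essentially a transport of the $N\geq r$ classification along the Morita equivalence plus truncation by $f$.
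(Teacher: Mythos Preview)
Your proposal is correct and follows essentially the same route as the paper's proof: split into $n\geq r$ (handled by Corollary \ref{n>r}) and $n<r$ (pass to $N\geq r$, use the idempotent $f$ with $f\sS f\cong\cysSr_\scK$, truncate the simple $\sS$-modules $L(\widetilde\mu)$ and invoke Lemma \ref{parameter} to determine which survive). Your write-up is slightly more explicit than the paper's about the idempotent-truncation dictionary and about why $P_{\fS_r}(q)\neq0$, but the argument is the same.
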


\begin{proof}
If $n\geq r$, then $\mathcal{KP}_m(n,r)=\mathcal{KP}_m(r)$ and the result can be seen in Corollary \ref{n>r}.

Now suppose now $n<r\leq N$. Then there exists an idempotent $f\in\sS_\bfu(N,r)_\scK$ such that
\begin{equation}\label{simple set}
\{f(\mathcal{T}_\bfm(N,r)\otimes_{\cysHr_\scK} D^\mu)\;|\;\mu\in \mathcal{KP}_m(N,r)=\mathcal{KP}_m(r)\}\setminus \{0\}\end{equation}
forms a complete set of non-isomorphic irreducible $\cysSr$-modules.
Since $$f(\mathcal{T}_\bfm(N,r)\otimes_{\cysHr_\scK} D^\mu)\cong \mathcal{T}_\bfm(n,r)\otimes_{\cysHr_\scK} D^\mu,$$
which is nonzero if and only if $\mu\in \mathcal{KP}_m(n,r)$
by Lemma \ref{parameter}. Hence, our assertion follows.
\end{proof}

\begin{Rem} The hypothesis that $q$ is not a root of unity in
Theorem \ref{simple module} is stronger than the hypothesis that $P_{\fS_r}(q)\neq0$ in $\scK$.
This is required in order to use \cite[Thm 6.6]{DD}. There should be a direct proof under the latter hypothesis.

We also note that when $q$ is not a root of unity the algebra $\cysHr_\scK$, and hence $\cysSr_{\!\scK}$, can still be non-semisimple.
\end{Rem}

\begin{appendix}
\section{Proof of Theorem \ref{standard-basis-thm}}
Let $\leq$ be the Bruhat order on $\fS_r$, that is, $w'\leq w$ if $w'$ is a subexpression of some reduced expression of $w$.
For any $y, w\in \fS_r$, there is an element $y\ast w\in \fS_r$ such that $\ell(y\ast w)\leq \ell(y)+\ell(w)$ and
\begin{equation}\label{mult}
T_yT_w=\sum_{z\leq y\ast w}f_{y, w,z}T_z,\end{equation}
where $f_z^{y, w}\in \scR$; see \cite[Prop.~4.30]{DDPW}.
Also, by Lemma \ref{commutator} and an induction on the length $\ell(w)$ of $w$,
we have taht, for $\bfa,\bfb\in \mbz_m^r$ and $w,y\in \fS_r$, there exist $f_{y, \bfb}^{\bfa, w}\in\scR$ such that
\begin{equation}\label{Basic0}
L^\bfa T_w=T_wL^{\bfa w}+\sum_{y<w, \bfb\in \mbz_m^r}f_{y, \bfb}^{\bfa, w} T_{y} L^\bfb.
\end{equation}

\begin{Lem}\label{basis change} The set
$$\sX=\bigcup_{\lambda, \mu\in \Lambda(n,r)}\{T_uT_dL^\bfa T_v\mid u\in \fS_\lambda, d\in \msD_{\lambda,\mu},
\bfa\in \mbz_m^r, v\in \msD_{\nu(d)} \cap \fS_\mu \}$$
forms an $\scR$-basis of $\cysHr $.
\end{Lem}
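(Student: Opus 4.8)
The plan is to recognise $\sX$ as a ``unitriangular'' rewriting of the Ariki--Koike PBW basis $\{T_wL^\bfb\mid w\in\fS_r,\ \bfb\in\mbz_m^r\}$ of Lemma \ref{pbw basis}, the filtration being by the length of the $\fS_r$-component; fix a pair $\la,\mu\in\Lambda(n,r)$ throughout.

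\textit{Step 1 (the index set).} By Lemma \ref{Basic}(2), every $w\in\fS_r$ has a unique factorisation $w=udv$ with $u\in\fS_\la$, $d\in\msD_{\la,\mu}$, $v\in\msD_{\nu(d)}\cap\fS_\mu$, and $\ell(w)=\ell(u)+\ell(d)+\ell(v)$; in particular $\ell(ud)=\ell(u)+\ell(d)$, so $T_uT_d=T_{ud}$ and $T_uT_dT_v=T_{udv}$. Since place permutation by $v$ is a bijection of $\mbz_m^r$, the assignment $(u,d,\bfa,v)\mapsto(udv,\ \bfa v)$ is a bijection from the indexing set of $\sX$ onto $\fS_r\times\mbz_m^r$; hence $\sX$ has exactly $|\fS_r|\cdot|\mbz_m^r|$ elements, the $\scR$-rank of $\cysHr$.

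\textit{Step 2 (leading terms).} Fix $(u,d,\bfa,v)$. Applying \eqref{Basic0} (the iterate of Lemma \ref{commutator}) to $L^\bfa T_v$ gives
$$L^\bfa T_v=T_vL^{\bfa v}+\sum_{y<v,\ \bfb\in\mbz_m^r}f^{\bfa,v}_{y,\bfb}\,T_yL^\bfb,$$
with $y$ running only over elements strictly below $v$ in the Bruhat order. Left-multiplying by $T_uT_d=T_{ud}$ and using $T_uT_dT_v=T_{udv}$ yields
$$T_uT_dL^\bfa T_v=T_{udv}\,L^{\bfa v}+\sum_{y<v,\ \bfb}f^{\bfa,v}_{y,\bfb}\,T_{ud}T_yL^\bfb.$$
By \eqref{mult}, each $T_{ud}T_y$ is an $\scR$-linear combination of $T_z$ with $\ell(z)\le\ell(ud)+\ell(y)=\ell(u)+\ell(d)+\ell(y)<\ell(udv)$. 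Hence, expanded in the PBW basis, $T_uT_dL^\bfa T_v$ equals $T_{udv}L^{\bfa v}$ with coefficient $1$, plus an $\scR$-combination of basis elements $T_zL^\bfb$ with $\ell(z)<\ell(udv)$.

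\textit{Step 3 (conclusion) and the main point.} Order the PBW basis so that $\ell(w)$ is non-decreasing. By Steps 1 and 2 the matrix expressing $\sX$ in this ordered basis is block lower-triangular, the diagonal block indexed by a given value of $\ell(\,\cdot\,)$ being a permutation matrix (via the bijection of Step 1); thus its determinant is a unit of $\scR$ and $\sX$ is an $\scR$-basis of $\cysHr$. The only point requiring care is the length bookkeeping in Step 2 --- that the corrections in \eqref{Basic0} involve only $T_y$ with $y<v$ (the precise output of the induction behind Lemma \ref{commutator}), and that \eqref{mult} cannot lift the $\fS_r$-length back up to $\ell(udv)$. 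Granting Lemmas \ref{pbw basis}, \ref{commutator} and \ref{Basic} together with \eqref{mult} and \eqref{Basic0}, everything else is formal bookkeeping.
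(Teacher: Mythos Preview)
Your proof is correct and follows essentially the same approach as the paper's: both exploit the length filtration and the commutation formulas \eqref{Basic0} and \eqref{mult}, together with the unique factorisation $w=udv$ from Lemma~\ref{Basic}(2). The only cosmetic difference is the direction of the triangularity argument --- the paper expands each PBW element $T_wL^\bfa$ as a combination of elements of $\sX$ by induction on $\ell(w)$ (showing $\sX$ spans, and then invoking equality of cardinalities), whereas you expand each element of $\sX$ in the PBW basis and read off an explicit block-unitriangular transition matrix; the underlying computations are the same.
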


\begin{proof} Recall form Lemma \ref{pbw basis} that $\cysHr$ is a free $\scR$-module with basis
$$\{T_wL^{\bfa}\mid w\in\fS_r, \bfa\in\mbz_m^r\},$$
 which, by Lemma \ref{Basic}(2), has the same cardinality as $\mathcal{X}$.
Thus, it suffices to prove that each basis element $T_wL^{\bfa}$ of $\cysHr$
is an $\scR$-linear combination of elements in $\sX$. We proceed by induction
on the length $\ell(w)$ of $w$.

Take arbitrary $w\in\fS_r$ and $\bfa\in\mbz_m^r$.
If $\ell(w)=0$, then $T_wL^\bfa=L^\bfa\in \sX$. Suppose now $\ell(w)\geq 1$. By Lemma \ref{Basic},
 there are uniquely determined elements $u\in\fS_\lambda$, $d\in \msD_{\lambda,\mu}$
 and $v\in \msD_{\nu(d)}\cap \fS_\mu$ such that
$w=udv$ and $\ell(w)=\ell(u)+\ell(d)+\ell(v).$
 Applying \eqref{Basic0} and \eqref{mult} gives the equalities
 $$\aligned
T_wL^\bfa&=T_uT_d(T_v L^\bfa)\\
&=T_uT_d(L^{\bfa v^{-1}}T_v+\sum_{y< v,\, \bfb\in\mbz_m^r}f_{y,\bfb}^{\bfa,v}\, T_y L^\bfb)\\
&=T_uT_dL^{\bfa v^{-1}}T_v+\sum_{y< v,\, \bfb\in\mbz_m^r}f_{y,\bfb}^{\bfa,v}\, T_{ud}T_y L^\bfb\\
&=T_uT_dL^{\bfa v^{-1}}T_v+\sum_{y< v,\, \bfb\in\mbz_m^r}f_{y,\bfb}^{\bfa,v}\, \sum_{z\leq (u d)\ast y, \,\bfb\in\mbz_m^r}f_{ud,y,z}T_z L^\bfb.\\
\endaligned$$
 Since $y<v$, we obtain that
$$\ell(z)\leq \ell(u)+\ell (d)+\ell(y)<\ell(u)+\ell (d)+\ell(v)=\ell(w).$$
By induction, $T_wL^\bfa$ is an $\scR$-linear
combination of elements in $\sX$.
\end{proof}

\begin{proof}[\bf Proof of Theorem \ref{standard-basis-thm}.] Take $\da\in \Theta_m(n,r)_{\la, \mu}$ and write $d=d_{|\da|}$ for
notational simplicity.
By \eqref{comm-x-nu-sigma}, $x_{\nu(d)}\sigma^{\ddot\da}=\sigma^{\ddot\da}x_{\nu(d)}$.
This together with \eqref{element} implies that
$$\aligned
\frak b_{\da}&=x_\la T_{d} \sigma^{\ddot\da}\sum_{v\in \msD_{\nu(d)}\cap \fS_\mu}T_v\\
&=\sum_{u\in \msD^{-1}_{\nu(d^{-1})}\cap \fS_{\la}}T_uT_{d}x_{\nu(d)}
\sigma^{\ddot\da}\sum_{v\in \msD_{\nu(d)}\cap \fS_\mu}T_v\\
&=\sum_{u\in \msD^{-1}_{\nu(d^{-1})}\cap \fS_{\la}}T_uT_{d} \sigma^{\ddot\da}\big(x_{\nu(d)}
\sum_{v\in \msD_{\nu(d)}\cap \fS_\mu}T_v\big)\\
&=\sum_{u\in \msD^{-1}_{\nu(d^{-1})}\cap \fS_{\la}}T_uT_{d} \sigma^{\ddot\da}x_{\mu}
\in \cysHr x_{\mu}.
\endaligned$$
Hence, ${\frak b}_\da\in x_\la\cysHr\cap \cysHr x_{\mu}.$

\vspace{.3cm}\noindent
{\it Linear Independence of $\mathcal{B}_{\la,\mu}$.}
Suppose that $$\sum_{\da\in \Theta_m(n,r)_{\la, \mu}}\gamma_{\da} \frak b_{\da}=0,
\text{ where $\gamma_{\da}\in\scR$.}$$
 By the definition, each $\frak b_{\da}$ is an $\scR$-linear combination
of elements of the form \linebreak $T_uT_{d_{|\da|}}L^\bfa T_v$ for $u\in \fS_\lambda$,
$\bfa\in \mbz_m^r$, and $v\in \msD_{\nu(d_{|\da|})} \cap \fS_\mu$.
Since, by Lemma \ref{from da to a}(1),
$$\cnrm_{\la, \mu}=\bigcup_{d\in\msD_{\la, \mu}}\cnrm_{\la, \mu}^d,$$
it follows from Lemma \ref{basis change} that for each fixed $d\in \dlm$,

\begin{equation}\label{form 6}
 \sum_{\da\in \Theta_m(n,r)_{\la, \mu}^d}\gamma_{\da} \frak b_{\da}=0.
 \end{equation}

We now fix such $d\in \dlm$. Let $d_0$ be the (unique) longest element
in $\msD_{\nu(d)}\cap \fS_\mu$ such that $w^0_\mu=w_{\nu(d)}^0d_0$,
where $w^0_\mu$ (resp., $w_{\nu(d)}^0$) denotes the longest
element of $\fS_\mu$ (resp., $\fS_{\nu(d)}$).
If $\da\in \cnrm_{\la, \mu}^d$, then, by \eqref{Basic0} and \eqref{mult},
 $$\aligned
 \frak b_{\da}&=T_{w_{\la}^0}T_d\, \sigma^{\ddot\da} T_{d_0}+\sum_
 {u\in\fS_{\la}, \, v\in \msD_{\nu(d)}\cap \fS_{\mu}\atop u< w^0_\la \,\text {or } \,
 v< d_0 }T_uT_d\,\sigma^{\ddot\da} T_v\\
 &= T_{w_{\la}^0}T_d\, \sigma^{\ddot\da} T_{d_0}+{\frak f}_{\da},
 \endaligned$$
where $\frak f_{\da}$ is an $\scR$-linear combination of the elements
$T_uT_d\,L^\bfa T_v$ with $\bfa\in\mbz_m^r$, $u\in \fS_\la$ and $v\in \msD_{\nu(d)}\cap \fS_{\mu}$
satisfying
$\ell(udv)<\ell(w_{\la}^0 d d_0)=\ell(w_{\la}^0)+\ell(d)+\ell(d_0).$
Substituting $\frak b_{\da}$ in \eqref{form 6} gives that
$$\sum_{\da\in \Theta_m(n,r)_{\la, \mu}^d}\gamma_{\da} T_{w_{\la}^0}T_d\,
 \sigma^{\ddot\da} T_{d_0}+\sum_{\da\in \Theta_m(n,r)_{\la, \mu}^d}\gamma_{\da}{\frak f}_{\da} =0.$$
By Lemma \ref{basis change} again, we obtain that
\begin{equation*}
\sum_{\da\in \Theta_m(n,r)_{\la, \mu}^d}\gamma_{\da} T_{w_{\la}^0}T_d\, \sigma^{\ddot\da} T_{d_0}=0.
\end{equation*}
Since all $T_w$ are invertible in $\cysHr$, the above equality gives that
 $$\sum_{\da\in \ddot{\Theta}_m(n,r)_{\la, \mu}^d}\gamma_{\da} \,\sigma^{\ddot\da} =0.$$
 Applying Proposition \ref{linear independent 0} forces that $\gamma_{\da}=0$ for each $\da\in \Theta_m(n,r)_{\la, \mu}^d$.
We conclude that $\mathcal{B}_{\la,\mu}$ is linearly independent.

\vspace{.5cm}

\noindent
{\it Proof of the Spanning Condition.} We now prove  that $\mathcal B_{\la,\mu}$ spans $x_\la\cysHr\cap \cysHr x_\mu$. Take an arbitrary element $z\in x_\la\cysHr\cap \cysHr x_\mu$.
By Lemma \ref{basis change}, $z$ can be written as
$$z=\sum_{d, \bfa, v}\gamma_{(d, \bfa, v)}x_\la T_dL^\bfa T_v,$$
where $\gamma_{(d, \bfa, v)}\in \scR$, and the sum is taken over all  $d\in\msD_{\la, \mu}$,
$\bfa\in\mbz_m^r$, and $v\in \msD_{\nu(d)}\cap \fS_\mu$.
By applying \eqref{element},
$$\aligned
z&=\sum_{d\in\msD_{\la, \mu}}\bigg(\sum_{u\in \msD^{-1}_{\nu(d^{-1})}\cap \fS_{\la}}T_u\bigg)T_{d}
\sum_{\bfa, v}\gamma_{(d, \bfa, v)}x_{\nu(d)}L^\bfa T_v\\
&=\sum_{d\in\msD_{\la, \mu}}\bigg(\sum_{u\in \msD^{-1}_{\nu(d^{-1})}\cap \fS_{\la}}T_u\bigg)T_{d}z_d,\endaligned
$$
where
\begin{equation}\label{zd}z_d=\sum_{\bfa\in\mathbb Z_m^r, v\in \msD_{\nu(d)}\cap \fS_\mu}\gamma_{(d, \bfa, v)}x_{\nu(d)}L^\bfa T_v.\end{equation}

We claim that $z_d\in\cysHr x_\mu$ for all $d\in\msD_{\la, \mu}$. Indeed, since $z\in\cysHr x_\mu$, we have
by Lemma \ref{permutation mod} that $zT_k=qz$ for $s_k\in J_\mu$.
Thus,
$$\sum_{d\in\msD_{\la, \mu}}\sum_{u\in \msD^{-1}_{\nu(d^{-1})}\cap \fS_{\la}}T_uT_{d}(z_dT_k-qz_d)=0.$$
Further, by \eqref{Basic0}, we have for each fixed $d\in\msD_{\la, \mu}$,
$$z_d=\sum_{\bfa, v}\gamma_{(d, \bfa, v)}x_{\nu(d)}L^\bfa T_v
=\sum_{\bfa, v}\gamma_{(d, \bfa, v)}x_{\nu(d)}\big(T_{v}L^{\bfa v}+\sum_{y<v, \bfb\in\mbz_m^r}f^{\bfa,v}_{y,\bfb}T_{y}L^\bfb\big).$$
Since $\fS_{\nu(d)}\subseteq \fS_{\mu}$, $z_d$ can be written as an $\scR$-linear combination of
$T_wL^\bfa$ with $w\in \fS_\mu$ and $\bfa\in\mbz_m^r$.
Thus, for each $s_k\in J_\mu$, $z_dT_k$ can also be written as an $\scR$-linear combination of
$T_wL^\bfa$ with $w\in \fS_\mu$ and $\bfa\in\mbz_m^r$. By Lemmas \ref{Basic}(2) and \ref{pbw basis},
the elements $T_uT_dT_wL^\bfa$ for $u\in \msD^{-1}_{\nu(d^{-1})}\cap \fS_{\la}$, $w\in\fS_\mu$ and
$\bfa\in\mbz_m^r$ are linearly independent. This forces that $z_dT_k-qz_d=0$ for all
$s_k\in J_\mu$. By Lemma \ref{permutation mod}, $z_d\in\cysHr x_\mu$, proving the claim.

Let $d\in\msD_{\la, \mu}$. If $w\in \fS_{\nu(d)}$ and $w'<w$ (the Bruhat order of $\fS_r$), then $w'\in\fS_{\nu(d)}$.
By applying \eqref{Basic0}, we obtain that for each $d\in\msD_{\la, \mu}$ and $v\in \msD_{\nu(d)}\cap \fS_\mu$,
\begin{equation}\label{form1}
\sum_{\bfa\in\mbz_m^r}\gamma_{(d, \bfa, v)}x_{\nu(d)}L^\bfa
=\sum_{\bfb\in\mbz_m^r, w\in\fS_{\nu(d)}}h_{( \bfb, w)}L^\bfb T_w,
\end{equation}
where $h_{(\bfb, w)}\in\scR$. Thus, we obtain that
$$z_d=\sum_{\bfb\in\mbz_m^r,\, w\in\fS_{\nu(d)}}h_{( \bfb, w)}L^\bfb T_w(\sum_{v\in \msD_{\nu(d)}\cap \fS_\mu}T_v).$$
Further, for each $w\in\fS_{\nu(d)}$ and $v\in  \msD_{\nu(d)}\cap \fS_\mu$, we have $T_wT_v=T_{wv}$
with $wv\in\fS_\mu$. Then by the fact that $z_d\in\cysHr x_\mu$ and
Lemma \ref{permutation mod}, $z_d$ has the form
$$z_d=\sum_{\bfc\in\mbz_m^r}f_{(d, \bfc)}L^\bfc x_\mu$$
 for some $f_{(d, \bfc)}\in\scR$. By further applying \eqref{element}, we obtain that
$$z_d=\sum_{\bfc\in\mbz_m^r}f_{(d, \bfc)}L^\bfc x_\mu =\sum_{\bfc\in\mbz_m^r}f_{(d, \bfc)}L^\bfc x_{\nu(d)}\sum_{v\in\msD_{\nu(d)}\cap \fS_\mu}T_v.$$
In other words,
 $$\sum_{\bfa\in\mbz_m^r, v\in\msD_{\nu(d)}\cap \fS_\mu}\gamma_{(d, \bfa, v)}x_{\nu(d)}L^\bfa T_v=\sum_{\bfc\in\mbz_m^r}f_{(d, \bfc)}L^\bfc x_{\nu(d)}\sum_{v\in\msD_{\nu(d)}\cap \fS_\mu}T_v,$$
 that is,
 $$\sum_{v\in \msD_{\nu(d)}\cap \fS_\mu}\bigg(\sum_{\bfa\in\mbz_m^r}\gamma_{(d, \bfa, v)}x_{\nu(d)}L^\bfa -\sum_{\bfc\in\mbz_m^r}f_{(d, \bfc)}L^\bfc x_{\nu(d)}\bigg)T_v=0.$$
By \eqref{Basic0}, each term
$\sum_{\bfa\in\mbz_m^r}\gamma_{(d, \bfa, v)}x_{\nu(d)}L^\bfa -\sum_{\bfc\in\mbz_m^r}f_{(d, \bfc)}L^\bfc x_{\nu(d)}$
can be written as an $\scR$-linear combination of
 $L^\alpha T_w$ with $w\in \fS_{\nu(d)}$ and $\alpha\in\mbz_m^r$.
 By Lemma \ref{pbw basis}, the elements $L^\alpha T_wT_v$ for $\alpha\in\mbz_m^r$, $w\in \fS_{\nu(d)}$
 and $v\in\msD_{\nu(d)}\cap \fS_\mu$ are linearly independent. It follows that for each
 $v\in \msD_{\nu(d)}\cap \fS_\mu$,
$$\sum_{\bfa\in\mbz_m^r}\gamma_{(d, \bfa, v)}x_{\nu(d)}L^\bfa -\sum_{\bfc\in\mbz_m^r}f_{(d, \bfc)}L^\bfc x_{\nu(d)}=0,$$
and thus,
$$\sum_{\bfa\in\mbz_m^r}\gamma_{(d, \bfa, v)}x_{\nu(d)}L^\bfa =\sum_{\bfc\in\mbz_m^r}f_{(d, \bfc)}L^\bfc x_{\nu(d)}
\in x_{\nu(d)}\cysHr\cap \cysHr x_{\nu(d)}.$$
Then by Proposition \ref{symm-funct-composition}(2), $\sum_{\bfa}\gamma_{(d, \bfa, v)}x_{\nu(d)}L^\bfa$ is
an $\scR$-linear combination of elements in $\{x_{\nu(d)}\sigma^\bfe\mid \bfe\in \Gamma(m,\nu(d))\}$.
Applying Lemma \ref{xim} implies that
$\sum_{\bfa}\gamma_{(d, \bfa, v)}x_{\nu(d)}L^\bfa$ is an $\scR$-linear
combination of elements in $\{x_{\nu(d)}\sigma^{\ddot\da}\mid \da\in \cnrm_{\la, \mu}^{d}\}$.

We conclude that
$$\aligned
z&=\sum_{d\in\msD_{\la, \mu}}\bigg(\sum_{u\in \msD^{-1}_{\nu(d^{-1})}\cap \fS_{\la}}T_u\bigg)T_{d}\sum_{\bfa, v}
\gamma_{(d, \bfa, v)}x_{\nu(d)}L^\bfa T_v\\
&=\sum_{d\in\msD_{\la, \mu}}\bigg(\sum_{u\in \msD^{-1}_{\nu(d^{-1})}\cap \fS_{\la}}T_u\bigg)T_{d}
\sum_{\bfe, v}f_{(d, \bfe)}x_{\nu(d)}\sigma^\bfe T_v\\
&=\sum_{d\in\msD_{\la, \mu}}\sum_{\da\in \cnrm_{\la, \mu}^{d}}f_{(d, \da)}x_\la T_{d}\sigma^{\ddot\da}
\sum_{v\in\msD_{\nu(d)}\cap \fS_\mu}T_v\\
&=\sum_{d\in\msD_{\la, \mu}}\sum_{\da\in \cnrm_{\la, \mu}^{d}}f_{(d, \da)}\bfb_{\da}
\in\sum_{\da\in \cnrm_{\la, \mu}}\scR \frak b_{\da}.
\endaligned$$
This finishes the proof.
\end{proof}

\section{The affine version of Lemma \ref{sym}}

\begin{Lem}\label{afsym}
Suppose that $$z=\sum_{\bfb\in \mbz^r}c_\bfb x_{(r)}X^\bfb\in x_{(r)}\afsHrR\cap \afsHrR x_{(r)},
\;\text{ where $c_\bfb\in \scR$.}$$
Then $c_\bfb=c_{\bfb w}$ for all $\bfb\in \mbz^r$ and $w\in \fS_r$.
\end{Lem}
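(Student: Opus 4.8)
The plan is to mimic the proof of Lemma \ref{sym}, this time using the affine analogue of Lemma \ref{commutator} for products $X^\bfb T_i$---which is valid for all $\bfb\in\mbz^r$ since the $X_j$ are invertible in $\afsHrR$---together with the $\scR$-linear independence of $\{x_{(r)}X^\bfb\mid\bfb\in\mbz^r\}$ recorded in Lemma \ref{permutation mod r}(2). As in the cyclotomic case, it suffices to fix $i$ with $1\le i<r$ and prove $c_\bfa=c_{\bfa s_i}$ for every $\bfa=(a_1,\dots,a_r)\in\mbz^r$, the assertion for arbitrary $w\in\fS_r$ then following because the $s_i$ generate $\fS_r$; and we may assume $a_i<a_{i+1}$.

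First I would expand $zT_i$ via the affine Lemma \ref{commutator}, use $z\in\afsHrR x_{(r)}$ and $x_{(r)}T_i=qx_{(r)}$ to rewrite $qz=zT_i$, and then compare the coefficient of $x_{(r)}X^\bfa$ on the two sides. The simplification over Lemma \ref{sym} is that the bounds $0\le b_j\le m-1$, which forced the split into Cases 1 and 2 there, no longer occur: determining the $\bfb\in\mbz^r$ with $b_i<b_{i+1}$ for which $\bfb s_i-t\alpha_i=\bfa$ for some $1\le t\le b_{i+1}-b_i$, and the $\bfb$ with $b_i>b_{i+1}$ for which $\bfb s_i+t\alpha_i=\bfa$ for some $0\le t\le b_i-b_{i+1}-1$, one checks directly that these are precisely the vectors $\bfb=\bfa-u\alpha_i$, respectively $\bfb=(\bfa-u\alpha_i)s_i$, for $u\ge0$. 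Collecting the contributions and using $(\bfa-u\alpha_i)s_i=\bfa s_i+u\alpha_i$ then produces the single identity
$$c_\bfa-c_{\bfa s_i}=(q-1)\sum_{u\ge1}\bigl(c_{\bfa-u\alpha_i}-c_{(\bfa-u\alpha_i)s_i}\bigr),$$
whose right-hand sum is finite because $z$, being an element of $\afsHrR$, has finite support.

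To conclude, set $g(\bfb)=c_\bfb-c_{\bfb s_i}$ for $\bfb\in\mbz^r$. Then $g$ has finite support, $g(\bfb s_i)=-g(\bfb)$, $g(\bfb)=0$ whenever $b_i=b_{i+1}$, and the identity above gives $g(\bfb)=(q-1)\sum_{u\ge1}g(\bfb-u\alpha_i)$ whenever $b_i<b_{i+1}$. Suppose $g\not\equiv0$ and pick $\bfa^{*}$ with $g(\bfa^{*})\ne0$ whose $i$-th entry $(\bfa^{*})_i$ is minimal among all such vectors. The case $(\bfa^{*})_i=(\bfa^{*})_{i+1}$ is excluded since then $g(\bfa^{*})=0$; if $(\bfa^{*})_i>(\bfa^{*})_{i+1}$ then $\bfa^{*}s_i$ has strictly smaller $i$-th entry while $g(\bfa^{*}s_i)=-g(\bfa^{*})\ne0$, contradicting minimality; and if $(\bfa^{*})_i<(\bfa^{*})_{i+1}$ then every $\bfa^{*}-u\alpha_i$ with $u\ge1$ has strictly smaller $i$-th entry, so $g$ vanishes on all of them by minimality and the identity forces $g(\bfa^{*})=0$---again a contradiction. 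Hence $g\equiv0$, which is the required conclusion. This ``extremal entry'' step replaces the induction on $a_i$ used in Lemma \ref{sym}, which is unavailable here since the entries of $\bfb$ are unbounded integers; I expect the only point needing care to be the combinatorial bookkeeping of which $\bfb$ contribute to the coefficient of $x_{(r)}X^\bfa$, to be handled exactly as in the proof of Lemma \ref{sym}.
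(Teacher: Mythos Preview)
Your proposal is correct and follows essentially the same approach as the paper: both derive the identity $c_\bfa-c_{\bfa s_i}=(q-1)\sum_{t\ge1}(c_{\bfa-t\alpha_i}-c_{(\bfa-t\alpha_i)s_i})$ by comparing coefficients in $qz=zT_i$, and both then exploit the finite support of the $c_\bfb$ to conclude. The only cosmetic difference is the finishing step---the paper does a downward induction on the $(i{+}1)$-th entry $a_{i+1}$ (using that $\bfa-t\alpha_i$ has $(i{+}1)$-th entry $a_{i+1}+t$), whereas you run an equivalent extremal argument on the $i$-th entry.
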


\begin{proof} For each $\bfa=(a_1,\ldots,a_r)\in\mbz^r$, set
$$\|\bfa\|={\rm max}\{|a_i|\mid 1\leq i\leq r\}.$$
 It is clear that $\|\bfa\|=\|\bfa w\|$ for each $w\in \fS_r$.
Since only finitely many $c_\bfb$ may not be zero, there is a positive integer $N$ such that
$$c_\bfb=0\;\;\text{whenever $\|\bfb\|>N$.}$$

It suffices to prove that $c_\bfa=c_{\bfa s_i}$ for each fixed $\bfa\in \mbz^r$ and $1\leq i<r$.
Write $\bfa=(a_1,\ldots,a_r)$. If $a_i=a_{i+1}$, then
$\bfa=\bfa s_i$ and hence, $c_\bfa=c_{\bfa s_i}$, as desired. Now let $a_i\not=a_{i+1}$.
We may suppose $a_i<a_{i+1}$ (Otherwise, we replace $\bfa$ by $\bfa s_i$). We proceed by induction
on $a_{i+1}$. If $|a_{i+1}|>N$, then $\|\bfa\|=\|\bfa s_i\|>N$. Thus, $c_\bfa=c_{\bfa s_i}=0$.
Let $-N\leq k\leq N$ and suppose that for each $\bfa$ with $k+1\leq a_{i+1}$, the
equality $c_\bfa=c_{\bfa s_i}$ holds. Now we take $\bfa\in\mbz^r$ with $a_i<a_{i+1}=k$.

By Lemma \ref{commutator}, we obtain that
 $$\aligned
 z T_i=&\sum_{\bfb\in \mbz^r}c_\bfb x_{(r)}X^\bfb T_i\\
  =&\sum_{\bfb\in \mbz^r}c_\bfb x_{(r)}T_i X^{\bfb s_i} +\sum_{\bfb\in \mbz^r\atop b_i< b_{i+1}}(q-1)c_\bfb x_{(r)}
  \sum_{t=1}^{b_{i+1}-b_i}X^{\bfb s_i-t\alpha_i}\\
 &+\sum_{\bfb\in \mbz^r\atop b_i> b_{i+1}}(1-q)c_\bfb x_{(r)}\sum_{t=0}^{b_{i}-b_{i+1}-1}X^{\bfb s_i+t\alpha_i}.\\
\endaligned$$

Since $z\in \afsHrR x_{(r)}$ and $x_{(r)}T_i=qx_{(r)}$, it follows that $qz=z T_i$ which
gives rise to the equality
$$\aligned
 \sum_{\bfb\in \mbz^r}q c_\bfb x_{(r)}X^\bfb=&\sum_{\bfb\in \mbz^r}q c_\bfb x_{(r)} X^{\bfb s_i}
+\sum_{\bfb\in \mbz^r\atop b_i< b_{i+1}}(q-1)c_\bfb x_{(r)}\sum_{t=1}^{b_{i+1}-b_i}X^{\bfb s_i-t\alpha_i}\\
 &\qquad\qquad\qquad \quad+\sum_{\bfb\in \mbz^r\atop b_i> b_{i+1}}(1-q)c_\bfb x_{(r)}
 \sum_{t=0}^{b_{i}-b_{i+1}-1}X^{\bfb s_i+t\alpha_i}.
\endaligned$$
 For the $\bfa\in\mbz^r$ chosen as above, comparing the coefficients of $x_{(r)}X^\bfa$ on both sides
 implies that
\begin{equation} \label{coefficients-comp-affine}
qc_\bfa=qc_{\bfa s_i}+c'+c'',
\end{equation}
 where $c'$ (resp., $c''$) denotes the coefficient of $x_{(r)}X^\bfa$ in the second
(resp., third) sum of the right hand side. In the following we calculate $c'$ and $c''$.

Consider those $\bfb=(b_1, \ldots, b_r)\in\mbz^r$ with $b_i< b_{i+1}$ such that
$$\bfb s_i-t\alpha_i=\bfa\;\text{ for some $1\leq t\leq b_{i+1}-b_i$.}$$
Then $b_i=a_{i+1}-t$ and $b_{i+1}=a_i+t$, and thus, $t\leq b_{i+1}-b_i=a_i-a_{i+1}+2t$. Hence,
$$a_{i+1}-a_i\leq t.$$
  Since $\bfa s_i-t\alpha_i=\bfa-(t-(a_{i+1}-a_i))\alpha_i$, by substituting
$t$ for $t-(a_{i+1}-a_i)$, we obtain that $\bfb=\bfa-t\alpha_i$
with $t\geq 0$.

Similarly, those $\bfb=(b_1, \ldots, b_r)\in\mbz_m^r$ with $b_i>b_{i+1}$ such that
$$\bfb s_i+t\alpha_i=\bfa\;\text{ for some $0\leq t\leq b_{i}-b_{i+1}-1$}$$
are simply $\bfb=\bfa s_i+t\alpha_i$ for all $t\geq 0$.

We conclude that
$$c'=(q-1)\sum_{t\geq 0}c_{\bfa-t\alpha_i}
\;\text{ and }\;
c''=(1-q)\sum_{t\geq 0}c_{\bfa s_i+t\alpha_i}.$$
  Hence, we obtain from \eqref{coefficients-comp-affine} that
$$\aligned
qc_\bfa&=qc_{\bfa s_i}+(q-1)\sum_{t\geq 0} c_{\bfa-t\alpha_i}
+(1-q)\sum_{t\geq 0} c_{\bfa s_i+t\alpha_i}\\
&=qc_{\bfa s_i}+(q-1)\big(c_\bfa+\sum_{t\geq 1} c_{\bfa-t\alpha_i}\big)
+(1-q)\big(c_{\bfa s_i}+\sum_{t\geq 1} c_{\bfa s_i+t\alpha_i}\big).
\endaligned$$
 This together with the fact that $\bfa s_i+t\alpha_i=(\bfa-t\alpha_i)s_i$ implies that
\begin{equation} \label{form2}
\aligned
c_\bfa-c_{\bfa s_i}=\sum_{t\geq 1}(q-1)\big(c_{\bfa-t\alpha_i}-c_{(\bfa-t\alpha_i)s_i}\big).\endaligned
\end{equation}
Since for each $t\geq 1$, $\bfa-t\alpha_i=:{\bf a}^{(t)}=(a_1^{(t)},\ldots,a^{(t)}_r)$ satisfies
$$a^{(t)}_i=a_i-t<a_{i+1}+t=a^{(t)}_{i+1}\;\text{ and }\;a^{(t)}_{i+1}=a_{i+1}+t\geq k+1,$$
 we have by the induction hypothesis that
$c_{\bfa-t\alpha_i}=c_{(\bfa-t\alpha_i)s_i}$ for all $t\geq 1$. Consequently, $c_\bfa=c_{\bfa s_i}$.
\end{proof}

\end{appendix}

\end{document}